\theoremstyle{plain}
\newtheorem{theorem}{Theorem}
\theoremstyle{definition}
\newtheorem{definition}[theorem]{Definition}
\newtheorem{example}[theorem]{Example}
\numberwithin{theorem}{chapter}
\definecolor{orange}{rgb}{0.8,0.3,0.0}
\definecolor{dgreen}{rgb}{0.3,0.9,0.3}
\newcommand{\be}{\begin{equation}}
\newcommand{\ee}{\end{equation}}
\newcommand\udot{\dot{u}}
\newcommand\wdot{\dot{w}}
\newcommand\uddot{\ddot{u}}
\newcommand{\bgamma}{\bar{\gamma}}
\newcommand{\rme}{\textrm{e}} 
\newcommand{\R}{\mathbb{R}}
\newcommand{\N}{\mathbb{N}}
\newcommand{\W}{W^{\!}}
\renewcommand{\epsilon}{\varepsilon}
\renewcommand{\phi}{\varphi}
\newcommand{\cE}{\mathcal{E}}
\newcommand{\cO}{\mathcal{O}}
\newcommand{\ftrans}{\phi_{N_{\!R\!}}}
\newcommand{\etaNP}{\eta_{N_{\!P}}}
\newcommand{\tauNM}{\tau_{N_{\!M}\!}}
\newcommand{\gammaNM}{\gamma_{N_{\!M}}}
\newcommand{\gammaN}{\gamma_{N}}
\newcommand{\tauNP}{\tau_{N_{\!P}}}
\newcommand{\tauN}{\tau_N}
\newcommand{\VN}{V_{\!N_{\!M\!}}}
\newcommand{\uh}{u^{h^{\!}}}
\newcommand{\uhm}{u^{h}_{-\!}}
\newcommand{\uhp}{u^{h^{\!}}_{+\!}}
\numberwithin{equation}{chapter}
\numberwithin{figure}{chapter}
\numberwithin{table}{chapter}
\renewcommand{\geq}{\geqslant}
\renewcommand{\leq}{\leqslant}
\definecolor{dgreen}{rgb}{0.3,0.8,0.3}
\newcommand{\ssout}[1]{\ifmmode\text{\color{red}\sout{\color{red} \ensuremath{#1}}}\else{\color{red}\sout{{\color{red} #1}}}\fi}
\begin{document}
%
%
%
\title{Practicalities of State-Dependent and Threshold Delay Differential Equations}
%
%
\author{%
    A.R. Humphries\textsuperscript{*\dag}
    and 
    A.S. Eremin\textsuperscript{\ddag}
    and 
    Z. Wang\textsuperscript{*}
    \\ \smallskip\small
    \textsuperscript{*}%
    Department of Mathematics and Statistics, McGill University, Montreal, QC, Canada H3A 0B9                     
    \\
    \textsuperscript{\dag}%
    Department of Physiology, McGill University, Montreal, QC, Canada H3G 1Y6
    \\
    \textsuperscript{\ddag}%
    St.~Petersburg State University, St.~Petersburg, 199034, Russian Federation  
}
    \maketitle
%
%
%
%

\begin{abstract}
	Delays are ubiquitous in applied problems, but often do not arise as the simple constant discrete delays that analysts and numerical analysts like to treat. In this chapter we show how state-dependent delays arise naturally when modeling and the consequences that follow. We treat discrete state-dependent delays, and delays implicitly defined by threshold conditions. We will consider modeling, formulation as dynamical systems, linearization, and numerical techniques. For discrete state-dependent delays we show how breaking points can be tracked efficiently to preserve the order of numerical methods for simulating solutions. For threshold conditions we will discuss how a velocity ratio term arises in models, and present a heuristic linearization method that avoids Banach spaces and sun-star calculus, making the method accessible to a wider audience. We will also discuss numerical implementations of threshold and distributed delay problems which allows them to be treated numerically with standard software. 
\end{abstract}
%
%
%
%
%
%
%

\CCLsection*{Acknowledgments}

We are grateful to Dimitri Breda, Rossana Vermiglio and Jianhong Wu for organizing the Advanced School \emph{Delays and Structures
in Dynamical Systems:
Modeling, Analysis and
Numerical Methods} at the International Centre for Mechanical Sciences
(CISM) in November 2023 in Udine, and we thank CISM for their support. 
The work presented here has grown out of collaborations with colleagues and students over many years. We would particularly like to acknowledge the contributions of 
Finn Upham, Orianna DeMasi, Felicia Magpantay, Renato Calleja, Morgan Craig, Tyler Cassidy, Shaza Alsibaai, Bernd Krauskopf, Tomas Gedeon,  
Hans-Otto Walther and Michael Mackey. ARH is supported by a National Science and Engineering Research Council (NSERC) of Canada Discovery Grant.

\CCLsection{Delay Differential Equations}
\label{sec:ddes}

A delay differential equation (DDE) is a differential equation where the evolution of the state depends on past value(s) of the state. A very simple example would be to find $u(t)\in\R$ for $t\geq t_0$ such that
\begin{equation}  \label{eq:dde1}
    \udot(t)=\lambda u(t-\tau),
\end{equation}
where $\lambda\in\R$ is a parameter. If $\tau=0$ this is a linear scalar ordinary differential equation (ODE). Adding an 
initial condition $u(t_0)=u_0$ to the ODE creates an initial value problem (IVP) whose solution (with $\tau=0$) is
\begin{equation}  \label{eq:ode1sol}
    u(t)=e^{\lambda(t-t_0)}u_0.
\end{equation}

When the delay $\tau$ is a positive constant the DDE \eqref{eq:dde1} is still linear,
but no longer trivial to solve. To begin with, because of the delay $\tau$, it is no longer sufficient to specify initial data at a single point. To solve \eqref{eq:dde1} as an IVP for $\tau>0$ it will be necessary to specify an initial function 
\begin{equation} \label{eq:phi}
u(t_0+\theta)=\phi(\theta), \quad \theta\in[-\tau,0],   
\end{equation}
just to be able to evaluate the right hand-side of \eqref{eq:dde1} for each $t\in[0,\tau]$. But since in dynamical systems theory, the phase space of the dynamical system is essentially the space in which the initial conditions live, 
equation \eqref{eq:phi} leads us to a dynamical system defined on a space of functions, and hence to infinite dimensional dynamical systems. 

There is a well developed theory of DDEs as retarded functional differential equations (RFDEs); see \cite{BellmanCooke63,Hale77,HaleLunel93,DGVLW95}. Although this is a wonderful sequence of books, there is progressively more functional analysis to digest as the theory gets deeper, and the later material 
(particularly the sun-star calculus of \cite{DGVLW95}) 
will not be accessible to many applied scientists.  This is problematic as biologists, physiologists and pharmaceutical scientists will not readily embrace models and mathematics that they do not understand. One can apply all the mathematical tools and techniques at one's disposal to solve the problem at hand, but to have
impact as an applied mathematician and to reach the widest possible audience, the solution should be distilled down to the lowest level of mathematics needed to get to and explain the solution. So,
in our view it is best to avoid Banach spaces when talking to physiologists, and it is okay to trade some rigour for traction and progress on applied problems. So, in the current work we will largely eschew Banach spaces and associated analytical technicalities. That said, some notation is very useful.

An RFDE can be stated as 
\begin{equation} \label{eq:rfde}
    \udot(t) = F(t,u_t), \quad t\geq t_0
\end{equation}
where $u_t$ defines the function segment
\begin{equation} \label{eq:ut}
    u_t(\theta)=u(t+\theta), \quad \theta\in[-\tau,0].
\end{equation}
With the 
notation \eqref{eq:ut} the initial condition \eqref{eq:phi} becomes
\begin{equation} \label{eq:rfdeic}
    u_{t_0}=\phi\in C,
\end{equation}
as illustrated in Figure~\ref{fig:rfdeic}.

\begin{figure}[t!] 
    \centering	
    \includegraphics[scale=0.4]{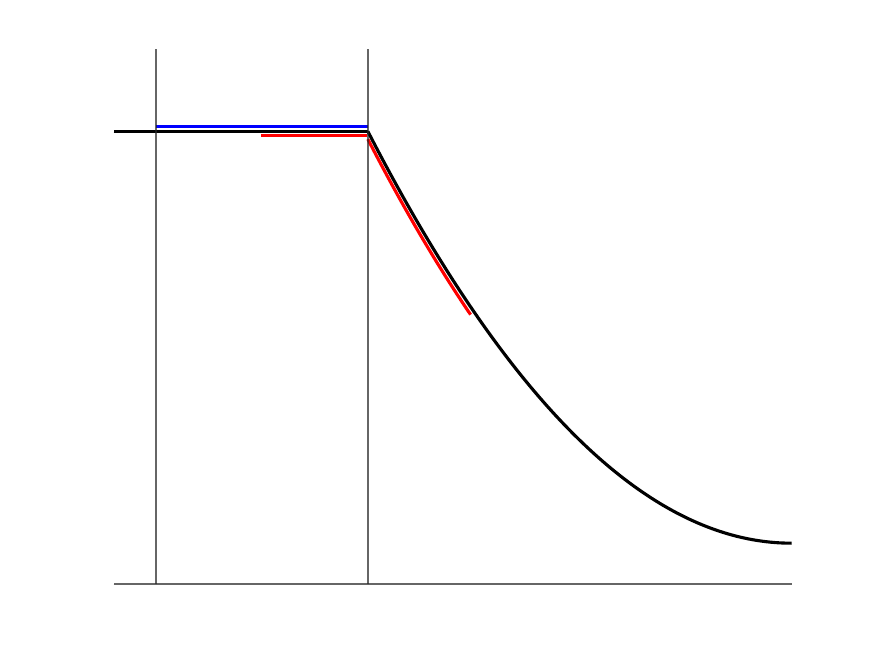}
    \put(-125,105){$\color{blue}\phi$}
    \put(-110,90){$\color{red}u_t$}
    \put(-143,6){$t_0-\tau$}
    \put(-100,6){$t_0$}
    \put(-20,6){$t$}
    \put(-50,40){$u(t)$}
    \caption{Illustration of \eqref{eq:rfdeic}, the initial condition for an RFDE, and a function segment $u_t$ for $t=t_0+\tau/2$ illustrating that $u_t$ is not necessarily continuously differentiable, even when $\phi$ is smooth.} \label{fig:rfdeic}
\end{figure}

Here $u(t)\in\R^d$ and $\udot(t)\in\R^d$ while $u_t$ is an element of $C$. But what is the appropriate function space $C$? The best that we could hope for is that the solution $u(t)$ is smooth for $t\geq t_0$, as is usually the case for ODEs. However, even if this were so, there is no reason to suppose that $\udot(t_0)=\dot\phi(0)$, since the initial function can be chosen arbitrarily. Consequently for $t\in(0,\tau)$ the function segments $u_t(\theta)$ will in general have discontinuous derivative when $\theta=-t$, as seen in Figure~\ref{fig:rfdeic}.  Consequently, $C$ is usually taken to be the space of continuous functions $C=C\bigl([-\tau,0],\mathbb{R}^d\bigr)$.

The RFDE notation \eqref{eq:rfde} is useful when the dynamics depends 
on a distribution of previous state values, but in many models
only one or a few previous state values contribute to the dynamics leading to discrete delay DDEs such as
\begin{equation} \label{eq:ddde}
\udot(t)=f(t,u(t),u(t-\tau)).
\end{equation}

So far, we have only considered constant delays, but in many applications the delay can depend on the state of the system,
$\tau(u(t))$, or in an RFDE $\tau(u_t)$. The analysis of RFDEs with
state-dependent delays becomes very complicated because
$C$ being a space of only continuous functions then results in $F$ not being a Lipschitz function, and standard results for smooth functions cannot be applied directly. See the review \cite{HKWW06} for a description of the theoretical issues and approaches for their resolution.

In the current work, we will consider why and how delays arise, 
why constant discrete delays may not always be appropriate, and what are the consequences of modelling with 
distributed or state-dependent delays. 
We will consider how to treat these problems numerically and
demonstrate heuristic methods for their analysis
without recourse to deep functional analysis.

\CCLsubsection{DDE Model Examples}
\label{sec:delphysio}

Delays are ubiquitous in the physical sciences and engineering and may arise, for example, from transport, communication, and processing time. In the life sciences delays often arise from a blend of all three of these. For example,  
a hormone or antigen must first be produced and then transported to a receptor before a signal is received and processed. Maturation and incubation delays are also often significant in biological processes, where delays may occur at many scales from subcellular to population level processes.

Delays are also often incorporated into biological models
to avoid modelling the details of intricate processes, instead 
representing the process by a function that includes a delay.
This way, introducing delays allows us to keep models 
simpler than they might otherwise be. Of course, it is a modelling decision on when 
to incorporate a delay rather than model the entire process
leading to that delay, and when to ignore a delay because it is too small to be significant.

\CCLsubsubsection{Subcellular Delays: The Goodwin Operon Model} 
Within cells, the molecular machinery to
transcribe DNA to produce mRNA and then translate 
mRNA to produce an effector protein
is known as an \emph{operon}. 
\cite{goodwin1963, goodwin1965} first modeled operon dynamics mathematically using a system of ODEs, now known as the Goodwin model.
This model tracks the concentrations of mRNA ($M$), intermediate protein ($I$), and final effector protein ($E$) over time through the major stages of gene expression: DNA transcription, mRNA translation, and post-translational modification. 

Goodwin acknowledged the presence of delays in the processes, but neglected delays from his model. Later, \citet{banks77} extended the model by incorporating constant time delays. Subsequent studies of Goodwin-like models with constant delays have shown that their dynamics are usually similar to those of the models without delays, see \cite{mackey2016simple}.
Similar to the statement there,
\begin{align} \label{eq:GwM}
	\dfrac{dM\!}{dt}(t) & = \beta_M e^{-\mu \tau_{M}}f(E(t-\tau_M)) -\gamma_M M(t), \\ \label{eq:GwI}
	\dfrac{dI}{dt}(t) & = \beta_I e^{-\mu\tau_I} M(t-\tau_I) -\gamma_I I(t), \\ \label{eq:GwE}
	\dfrac{dE}{dt}(t) & = \beta_E I(t) -\gamma_E E(t),
\end{align}
is one possible form of such a model. With $\mu=0$, this models a cell of constant size, while for $\mu>0$ it accounts for dilution due to cellular growth.

\CCLsubsubsection{Burns and Tannock \texorpdfstring {$\mathbf{G_0}$}{G0} Cell Cycle Model}

A small pool of hematopoietic stem cells (HSCs) residing in the bone marrow, undergo cell division and differentiation to give rise to all of the blood cells in the body, with the 
human hematopoietic system producing about $10^{11}$ blood cells of various types per day~\cite{Kaushansky_2016}. That's about the same as the number of stars in our galaxy!
Each cell division requires many active proteins produced by operons, but it would be completely infeasible to model a population of dividing cells from the level of operons. A tractable model of 
HSC dynamics is described by the classic $G_{0}$ cell-cycle model of \cite{Burns1970}, illustrated in Figure~\ref{fig_SchematicHSC}.

\begin{figure}[t!]
\centering
\begin{overpic}[scale=0.25]{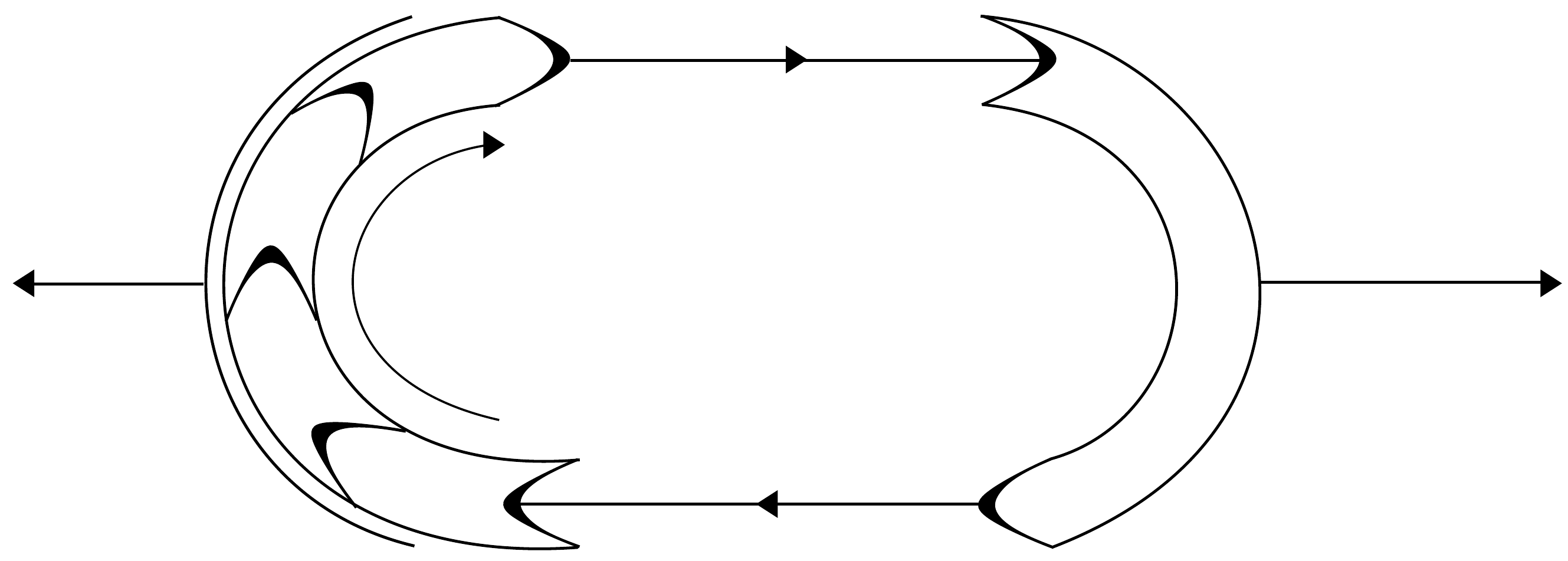}
\put(6.5,18.9){$\gamma$}
\put(2.1,15.4){\footnotesize{Apoptosis}}
\put(5.2,12.6){\footnotesize{Rate}}
\put(17.2,23.8){$G_{2}$}
\put(26.9,30.5){$M$}
\put(24.5,19){\footnotesize{Cell}}
\put(24,16.5){\footnotesize{Cycle}}
\put(25.9,14.0){$\tau$}
\put(17.5,11.2){$S$}
\put(26.0,3.8){$G_1$}
\put(43.4,33.2){\footnotesize{Self Renewal}}
\put(43.6,29.3){\footnotesize{Quiescence/}}
\put(44.0,26.8){\footnotesize{Senescence}}
\put(46.8,24.5){\footnotesize{Entry}}
\put(46.5,5.5){$\beta(Q)$}
\put(41.5,1.1){\footnotesize{Cell Cycle Entry}}
\put(65.8,17.9){\footnotesize{Resting}}
\put(66.6,15.4){\footnotesize{Phase}}
\put(76.1,16.5){$G_{0}$}
\put(88.5,18.4){$\kappa$}
\put(81.0,15.4){\footnotesize{Differentiation/}}
\put(83.0,13){\footnotesize{Death Rate}}
\end{overpic}
\vspace*{-1mm}
\caption{Schematic representation of the classical $G_{0}$ model for HSCs. The proliferating phase of the cell cycle is divided between 4 subphases:  gap one $G_{1}$, synthesis $S$, gap two $G_{2}$, and mitosis $M$. Cells in the resting phase (gap zero $G_{0}$), may differentiate with rate $\kappa$ or entry the cell cycle with rate $\beta(Q)$. Cells in the proliferation phase may be lost by apoptosis with rate $\gamma$, otherwise they re-enter the resting phase after mitosis, $\tau$ time units after they left the resting phase. (\textcopyright~Society for Industrial and Applied Mathematics (SIAM); Reproduced from \cite{SIADS19} with permission.)}
\label{fig_SchematicHSC}
\end{figure}

Of the five phases of the cell cycle, $S$ corresponds to DNA synthesis (which requires proteins produced by the operons), and $M$ is the actual mitosis step, when the cell divides in two. 
\cite{Mackey1978} formulated the $G_{0}$ cell-cycle model as
a DDE
\begin{gather} \label{eq:Qprime}
Q^{\prime}(t) = -(\kappa+\beta(Q(t)))Q(t)+A\beta(Q(t-\tau))Q(t-\tau),\\
\beta(Q) = f\frac{\theta^{s}}{\theta^{s}+Q^{s}},\qquad A = 2\mathnormal{e}^{-\gamma\tau} \notag
\end{gather}
that models the number of cells $Q$
in $G_0$ (the quiescent rest phase).
See \cite{Mackey_1994} for a full derivation of this DDE.

From the resting phase HSCs may enter the proliferating phase at a
rate $\beta(Q)$, or differentiate at a constant rate $\kappa$, or remain in the resting phase.
Once HSCs enter the proliferating phase they are lost by apoptosis with a constant rate $\gamma$ or undergo mitosis. 
After mitosis cells return to the $G_{0}$ resting phase, from whence the cycle may begin again. The model \eqref{eq:Qprime} represents
the 4 subphases of the cell cycle, $G_1$, $S$, $G_2$, $M$, very simply
by the time $\tau$ required for them to complete,
and by the amplification factor $A$ (with $A=2$ in pure cell division, and $1\ll A <2$ when $\mu>0$ and there is some cell death during the cell cycle). The beauty of this model is in its simplicity in replacing all of the complicated 
subcellular processes by a function that represents the resulting number of cells produced, and the time $\tau$ required for the cell cycle to complete.
This model can result in very complicated dynamics as explored in \cite{SIADS19}.

\CCLsubsubsection{Hematopoiesis}

\cite{Bernard_2003} used \eqref{eq:Qprime}
to describe HSC dynamics as one component of a larger model describing the regulation of circulating
white blood cell concentrations.
Subsequently many mathematical models have appeared which contain~\eqref{eq:Qprime}, or a variant, within
larger models for the production and regulation of circulating blood cells of 
various types; \cite{Adimy_2006a,Colijn_2005a,Colijn_2005b,Langlois2017}.
Multiple versions of~\eqref{eq:Qprime} have
also been coupled together to model discrete levels of cell maturity~\cite{Adimy_2006c,Qu_2010}.

\cite{craig2016} present a 
population model for white blood cells
that uses a variant of \eqref{eq:Qprime} to model 
the HSC population $Q$. This is coupled with two additional equations for mature neutrophils in the bone marrow reservoir, $N_R$, and circulating neutrophils $N$ to give
\begin{align} \label{eq:Q}
\frac{dQ}{dt} & = -\left(\kappa_N(G(t))+\kappa_\delta + \beta(Q(t))\right)
 + A_Q(t)\beta(Q(t-\tau_Q))Q(t-\tau_Q),\\ \notag
\frac{dN_R}{dt} & =   A_N(t) \kappa_N(G(t-\tauN)) Q(t-\tau_N)\frac{V(G(t))}{V(G(t-\tauNM))}\\ \label{eq:NR}
& \qquad -(\gamma_{N_R}+\ftrans(G(t)))N_R(t),\\ \label{eq:N}
\frac{dN}{dt} & =  \ftrans(G(t))N_R(t)- \gammaN N(t).
\end{align}

This model incorporates several interesting features
not found in \eqref{eq:Qprime}. 
First, the model incorporates a negative feedback loop driven by the principal cytokine granulocyte colony-stimulating factor (G-CSF) that regulates granulopoiesis. Essentially when GCSF concentrations are high, this stimulates the production of neutrophils through the $G$-dependent terms in the equations. On the other hand when neutrophil concentrations are high the production of G-CSF is inhibited; see
\cite{craig2016} for a statement of the differential equations governing G-CSF.

Second, there are many intermediate steps between HSCs and
mature neutrophils, including multipotent progenitor, common myeloid progenitor, granulocyte-macrophage progenitor, myeloblast, promyelocyte, myelocyte, metamyelocyte, band and finally
mature segmented neutrophil. Modelling each of these stages with separate copies of the Burns--Tannock cell cycle model
\eqref{eq:Qprime} would result in a very unwieldy system of ten or more equations. Instead, the model is simplified by omitting the intermediate stages
and introducing an amplification factor $A_N(t)$ defined by
\begin{equation} \label{eq:AN}
A_N(t)
= \exp \left[\int_{t-\tauNM(t)-\tauNP}^{t-\tauNM(t)} \etaNP(G(s)) d s-\gammaNM\tauNM(t)\right].
\end{equation}
This splits the time that it takes for an HSC to give rise to a mature neutrophil into separate proliferation and maturation stages. In the proliferation phase assumed to last a fix time $\tauNP$ cells divide at a rate $\etaNP$ that depends on the concentration of G-CSF, $G$. This is followed by a maturation time of variable length $\tauNM(t)$ during which cells die at a (small) fixed rate
$\tauNM$. Thus mature neutrophils are created at time $t$
from cells that differentiated from HSCs at time
$t-\tauN(t)$ where $\tauN(t)=\tauNP+\tauNM(t)$, the sum of the proliferation and maturation times. 

The maturation time $\tauNM(t)$ is governed by 
a G-CSF-dependent aging rate $V(G(t))$,
\be \label{eq:tauNM}
\int_{t-\tauNM(t)}^{t}V(G(s))ds   =  a.
\ee
Here $a>0$ is constant, representing the maturation
age. During maturation, cells age at nonconstant velocity $V(G(t))$ 
until they reach age $a$. Thus 
\eqref{eq:tauNM} implicitly defines the maturation delay
by $t-\tauNM(t)$ being the time 
that the cell started to mature so that maturation is complete at time $t$. 
In Section~\ref{sec:numdist} we will discuss how to evaluate
\eqref{eq:AN} and \eqref{eq:tauNM} in
numerical computations of the model \eqref{eq:Q}-\eqref{eq:N}.

Delays defined implicitly 
by equations like \eqref{eq:tauNM} are known as \emph{threshold delays}. They are defined by a process (in this case maturation) occurring at a variable speed
that needs to reach a certain threshold (in this case the maturation age $a$).

\cite{gedeon2022operon} extended the Goodwin model \eqref{eq:GwM}-\eqref{eq:GwE} by incorporating state-dependent transcription and translation to obtain
\begin{align} \label{eq:OpM}
	\dfrac{dM\!}{dt}(t) & = \beta_M e^{-\mu \tau_{M}(t)} \dfrac{v_M(E(t))}{v_M(E(t-\tau_{M}(t)))} f(E(t-\tau_M(t))) -\bgamma_M M(t), \\ \label{eq:OpI}
	\dfrac{dI}{dt}(t) & = \beta_I e^{-\mu\tau_I(t)} \dfrac{v_I({M(t)})}{v_I(M(t-\tau_I(t)))} M(t-\tau_I(t)) -\bgamma_I I(t), \\ \label{eq:OpE}
	\dfrac{dE}{dt}(t) & = \beta_E I(t) -\bgamma_E E(t),
\end{align}
supplemented with the two threshold state-dependent delays 
\begin{equation} \label{eq:tauMtauI}
	a_M = \int_{t-\tau_M(t)}^{t} v_M(E(s)) ds \quad \text{and} \quad a_I = \int_{t-\tau_I(t)}^{t} v_I(M(s)) ds.
\end{equation}

The velocity ratio $V(G(t))/V(G(t-\tauNM))$ that 
appears in \eqref{eq:NR}, and the similar 
terms in \eqref{eq:OpM} and \eqref{eq:OpI}
are a feature of properly constructed threshold delay models. 
In Section~\ref{sec:velratio} we will explain how this term arises.

Because the delay defined by \eqref{eq:tauNM} 
depends on the state of the system 
through the quantity $G(t)$ (or more precisely through the function $G_t$ in RFDE notation)  threshold delays are examples of \emph{state-dependent delays}. 

The general DDE \eqref{eq:ddde} 
will be a state-dependent delay DDE, if $\tau$
instead of being a constant delay, is allowed to depend on the state. The simplest way to do this is to make $\tau$ a function of the state-variable at time $t$, so $\tau=\tau(u(t))>0$. 
In Section~\ref{subsec:statedep},
we will explore the dynamics of the DDE \eqref{eq:twostatedep} with two state-dependent delays of this form. 

The DDE \eqref{eq:ddde} 
with $\tau$
defined implicitly by the threshold condition
\be \label{eq:thres}
\int_{t-\tau}^{t} V(u(s)) ds=a,
\ee
for a given non-negative function $V$ and positive constant $a$,
will also be a state-dependent delay DDE.
We remark that the DDE \eqref{eq:ddde} with delay $\tau$ defined by the threshold condition \eqref{eq:thres} is no longer a discrete delay DDE,
as the evolution of the solution depends on the delay $\tau$ which itself
depends integral of $V(u(s))$ for $s\in[t-\tau,t]$. Thus \eqref{eq:ddde},\eqref{eq:thres} is properly regarded as an RFDE, but one where $\udot(t)$ only depends on the values of $u(t)$ and $u(t-\tau)$, but where $\tau$ itself is defined as a function of $u_t$. This kind of halfway house between discrete delays and RFDEs will turn out to make threshold problems more tractable than general RFDEs.

It is also possible to nest delays, so that the delay $\tau$ depends on the state-variable at a delayed time; $\tau=\tau(u(t-\tau_0(u(t))))$, for given functions $\tau$ and $\tau_0$ (see \cite{HKRS22}).

\begin{figure}[t!]
\centering
\begin{overpic}[scale=0.3]{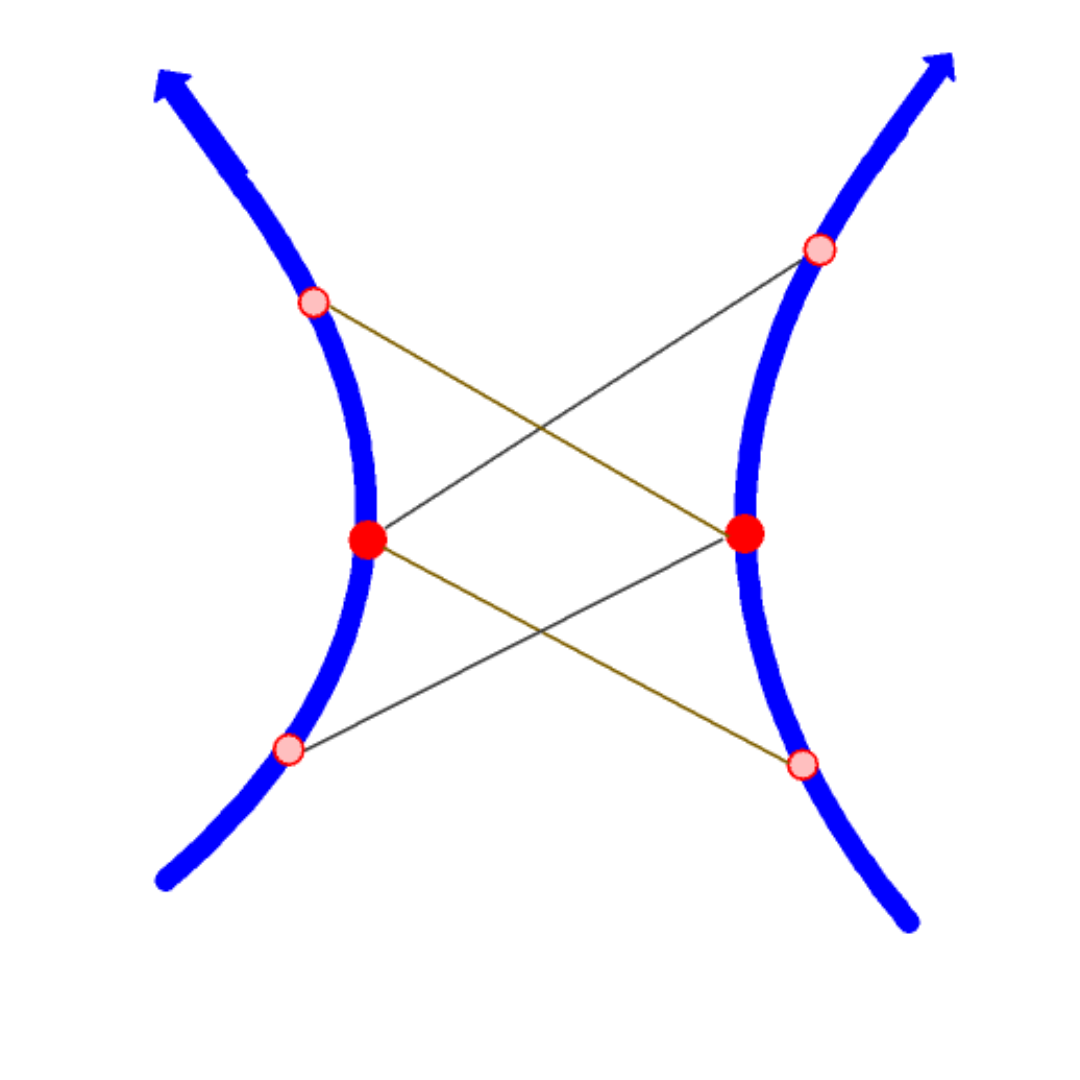}
\put(73,49) {$p(t)$}
\put(78,74) {$p(t\!+\!\tau_p^+)$}
\put(78,28) {$p(t\!-\!\tau_p^-)$} 
\put(1,71) {$e(t\!+\!\tau_e^+)$} 
\put(-1,29) {$e(t\!-\!\tau_e^-)$} 
\put(20,49) {$e(t)$} 
\put(-25,5){\vector(0,1){20}}
\put(-25,5){\vector(1,0){20}}
\put(-30,27){\textrm{time}}
\put(-3,3){space}
\end{overpic}
\vspace*{-1mm}
\caption{Schematic of \cite{WF45,WF49} electrodynamics. A proton $p(t)$ and electron $e(t)$ only feel forces (propagating at the speed of light $c$) from the other charge in the past and future at the points where the world lines and light cones intersect. Thus
delays and advances are implicitly defined by
$|p(t)-e(t\pm\tau_e^\pm)|=c\tau_e^\pm$ and  
$|e(t)-p(t\pm\tau_p^\pm)|=c\tau_p^\pm.$}
\label{fig_WF}
\end{figure}

Even more exotic state-dependent delays arise. The electrodynamics of \cite{WF45,WF49}, as illustrated in Figure~\ref{fig_WF},
replaces field theory with forces acting at a distance and propagating at the speed of light $c$.
A proton $p(t)$ will feel a force from an electron  in the past $e(t-\tau_e^-)$, where the delay $\tau_e^-$ is defined implicitly by the distance between $p(t)$ and $e(t-\tau_e^-)$ being
equal to the distance propagated by the force in that time:
$$|e(t-\tau_e^-)-p(t)|=c\tau_e^-.$$
The electron $e(t)$ similarly feels a force from the proton in the past
$p(t-\tau_p^-)$. Only including retarded forces breaks the time symmetry of physics, so Wheeler and Feynman worked with advanced forces as well.
Essentially each particle only feels forces from other particles at the points where the other particles' world-line intersects the forward and backward light cone of the original particle. Relativistic forces are needed for momentum to be conserved, and the whole set up even for just two particles results in a nasty system of neutral implicitly state-dependent advanced-retarded DDEs. Despite being nearly intractable, the problem continues to elicit interest, and a number of special cases have been tackled; see \cite{Schild63,Feynman65,DLGHP10,DLHR12,DeLuca16}.

\CCLsubsection{Why model with state-dependent delays?}
\label{subsec:statedep}

\cite{HDMM12,CHK17} studied the DDE
\be \label{eq:twostatedep} 
\udot(t)
=-\gamma u(t) - \sum_{j=1}^2\kappa_j u(t-a_j-c_j u(t)), \qquad u(t)\in\R.
\ee
It has two state-dependent delays
$\tau_j(u(t))=a_j+c_ju(t)$ 
which become constant delays when $c_1=c_2=0$, and are 
linearly state-dependent otherwise.

Equation~\eqref{eq:twostatedep} does not model any particular process or application, but is a wonderful vehicle
for studying the effects of state-dependent delays, because the 
model \eqref{eq:twostatedep} is completely linear
apart from the state-dependent delays. Consequently, any interesting nonlinear dynamics that are observed must be driven by the state-dependency of the delays.

\begin{figure}[t!]
\centering
\includegraphics[scale=0.55]{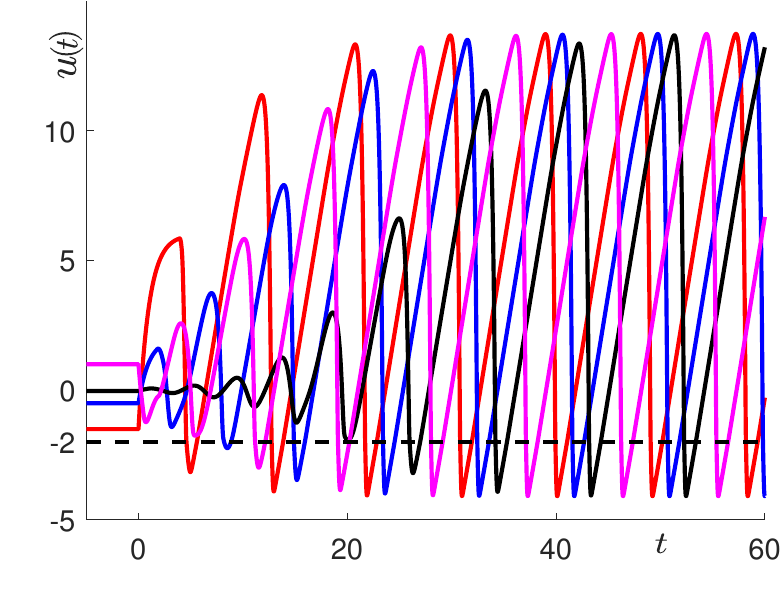}
\caption{Solutions of the IVP for
\eqref{eq:twostatedepmod} with constant initial functions $\phi(t)=u_0$ for several different values of $u_0$
with other parameters $1=a_1=\gamma<\kappa_2=\kappa_1=a_2=2$, $c_1=0.5$, $c_2=0.4$.
The dashed line at $u=-2$ indicates where $u(t)=-a_1/c_1$ and $\tau_1=0$. (\textcopyright~American Institute of Mathematical Sciences (AIMS); Reproduced from \cite{HDMM12} with permission.)}
\label{fig:advex}
\end{figure}
 
When $c_1=c_2=0$, equation \eqref{eq:twostatedep} defines a boring 
linear constant delay DDE, so lets consider the case of strictly positive
parameters $\gamma$, $\kappa_j$, $a_j$ and $c_j$.
Note first that $t-\tau_j=t-a_j-c_ju(t)<t$ if and only if $u(t)>-a_j/c_j$, so the delays will become advances if $u(t)<-a_j/c_j$. Consider, for a moment, the modified problem
\be \label{eq:twostatedepmod}
\udot(t) = -\gamma u(t) - \kappa_1 u(t_1) - \kappa_2 u(t_2), \quad t_j=\min\{t,t-a_j-c_ju(t)\}
\ee
where we have adjusted the delay terms to prevent them from becoming advances.
Solutions of this DDE for different initial functions are presented in Figure~\ref{fig:advex}, and all appear to converge to the same stable limit cycle, but with different phases. However, the solution
enters the region below the dashed line where 
$t<t-a_1-c_1u(t)$, and so are not solutions of the original problem
\eqref{eq:twostatedep}. We remark that the \cite{matlab} state-dependent DDE solver \texttt{ddesd} automatically solves \eqref{eq:twostatedep} by converting it to \eqref{eq:twostatedepmod}, so when using \texttt{ddesd} it is necessary to ensure that the delays really are delays along the whole solution, to avoid obtaining solutions like those in Figure~\ref{fig:advex} for \eqref{eq:twostatedepmod} when you really meant to solve \eqref{eq:twostatedep}.

To ensure that solutions to
\eqref{eq:twostatedep} do not terminate because 
a delay becomes advanced, it is sufficient (for this DDE) to place 
a constraint on the parameter values. Without loss of generality, order the delays so that $0>-a_1/c_1\geq -a_2/c_2$. Then if $u(t)>-a_1/c_1$
it follows that $t-a_j-c_ju(t)<t$ for both $j=1$ and $j=2$. Thus the problem of ensuring that the delays are not advanced is reduced 
to ensuring that $u(t)$ is bounded below by
$-a_1/c_1$. To do this note that from
\eqref{eq:twostatedep} if $u(t)=-a_1/c_1$
then
$$\udot(t)  =\gamma\frac{a_1}{c_1} 
    -\kappa_1 u(t)
   -\kappa_2 u(t-\tau_2) 
=(\gamma+\kappa_1)\frac{a_1}{c_1} 
   -\kappa_2 u(t-\tau_2) >0  
$$
if $u(t-\tau_2)<
\frac{a_1}{\gamma c_1}(\kappa_1+\kappa_2)$ and $\gamma>\kappa_2$. This leads to the following result:

\begin{theorem}[Existence, Boundedness and Uniqueness \cite{HDMM12}] \label{thm:twodelbd}
If $\gamma>\kappa_2$ and $\phi(t)\in(-\frac{a_1}{c_1},\frac{a_1}{\gamma c_1}(\kappa_1+\kappa_2))$ for all $t\in[-\tau_0,0]$ where
$\tau_0=\max_j\{a_j+(\kappa_1+\kappa_2)c_ja_1/(\gamma c_1)\}$ then the IVP \eqref{eq:twostatedep} with $u(t)=\phi(t)$ for $t\in[-\tau_0,0]$ has a unique solution which satisfies
\[u(t)\in\Bigl(-\frac{a_1}{c_1},\frac{a_1}{\gamma c_1}(\kappa_1+\kappa_2)\Bigr)
\qquad\forall t\geq0.\eqno{\qed}\]
\end{theorem}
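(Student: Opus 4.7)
The plan is to prove the theorem by combining a method-of-steps local existence argument with invariance of the open trapping region $I:=(-a_1/c_1,\,a_1(\kappa_1+\kappa_2)/(\gamma c_1))$. Under the hypothesis on $\phi$, both delays at $t=0$ satisfy $\tau_j(\phi(0))=a_j+c_j\phi(0)>0$ (for $j=2$ this uses the ordering $-a_1/c_1\geq -a_2/c_2$ assumed in the paragraph above the statement), so for $t$ in a short interval $[0,\delta]$ the delayed arguments $t-\tau_j(u(t))$ remain inside $[-\tau_0,0]$, where $u=\phi$ is already prescribed. The DDE then reduces to an ODE in $u(t)$ with a continuous, locally Lipschitz right-hand side, and Picard--Lindel\"of furnishes a unique solution on $[0,\delta]$. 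Iterating along successive steps will give both local existence and uniqueness once invariance of $I$ is established.

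For invariance I would let $[0,T_{\max})$ be the maximal interval of existence and argue by contradiction. Set $t^*:=\inf\{t\in(0,T_{\max}):u(t)\notin I\}$ and assume $t^*<T_{\max}$; by continuity $u(t^*)\in\partial I$, while $u(s)\in I$ for every $s\in[-\tau_0,t^*)$. If $u(t^*)=-a_1/c_1$ then $\tau_1(u(t^*))=0$, so the first delayed term collapses to $u(t^*-\tau_1)=u(t^*)=-a_1/c_1$ and the DDE evaluates to
\[
\udot(t^*)=(\gamma+\kappa_1)\frac{a_1}{c_1}-\kappa_2 u(t^*-\tau_2).
\]
Since $u(t^*-\tau_2)<a_1(\kappa_1+\kappa_2)/(\gamma c_1)$, the hypothesis $\gamma>\kappa_2$ yields $\udot(t^*)>0$, contradicting the fact that $u$ descends onto the lower boundary. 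If instead $u(t^*)=a_1(\kappa_1+\kappa_2)/(\gamma c_1)$, then using $u(t^*-\tau_j)>-a_1/c_1$ for $j=1,2$ gives
\[
\udot(t^*)<-\frac{a_1(\kappa_1+\kappa_2)}{c_1}+\kappa_1\frac{a_1}{c_1}+\kappa_2\frac{a_1}{c_1}=0,
\]
again a contradiction. Hence $u(t)\in I$ for all $t\in[0,T_{\max})$, both delays are confined to $(0,\tau_0]$, and the standard continuation criterion then forces $T_{\max}=\infty$.

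The part I expect to be most delicate is the lower-boundary case: it is precisely there that the state-dependence of $\tau_1$ does the essential work, since $u=-a_1/c_1$ is exactly where $\tau_1$ vanishes, and only this vanishing allows us to replace the unknown history value $u(t^*-\tau_1)$ by $u(t^*)$ itself. Reducing the positivity of $\udot(t^*)$ to the single scalar inequality $\gamma(\gamma+\kappa_1)>\kappa_2(\kappa_1+\kappa_2)$ and recognising that $\gamma>\kappa_2$ is exactly what makes this inequality pass is the algebraic heart of the argument; the corresponding upper-boundary estimate is then essentially automatic, and uniqueness along the method of steps is standard because the delays are bounded below by a positive constant on any compact subinterval of $[0,\infty)$.
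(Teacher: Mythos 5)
Your proposal is correct and follows essentially the same route as the paper: the heart of the argument is the trapping-region computation at the two boundaries of $\bigl(-\tfrac{a_1}{c_1},\tfrac{a_1}{\gamma c_1}(\kappa_1+\kappa_2)\bigr)$ — in particular the observation that $\tau_1$ vanishes exactly at the lower boundary so that $u(t-\tau_1)=u(t)$ there, and that $\gamma>\kappa_2$ is precisely the inequality $\gamma(\gamma+\kappa_1)>\kappa_2(\kappa_1+\kappa_2)$ needed for $\udot>0$ — which is exactly the calculation the paper carries out just before the theorem. The only (immaterial) difference is that you rebuild existence and uniqueness by hand via the method of steps and Picard--Lindel\"of, whereas the paper, having established boundedness and that the delays satisfy \eqref{eq:deldels}, simply invokes Driver's Theorems~\ref{thm:Driverexist} and~\ref{thm:Driveruniq} (both routes implicitly require $\phi$ to be Lipschitz).
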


\begin{figure}[t!]
\centering
\hspace*{-1em}\includegraphics[scale=0.96]{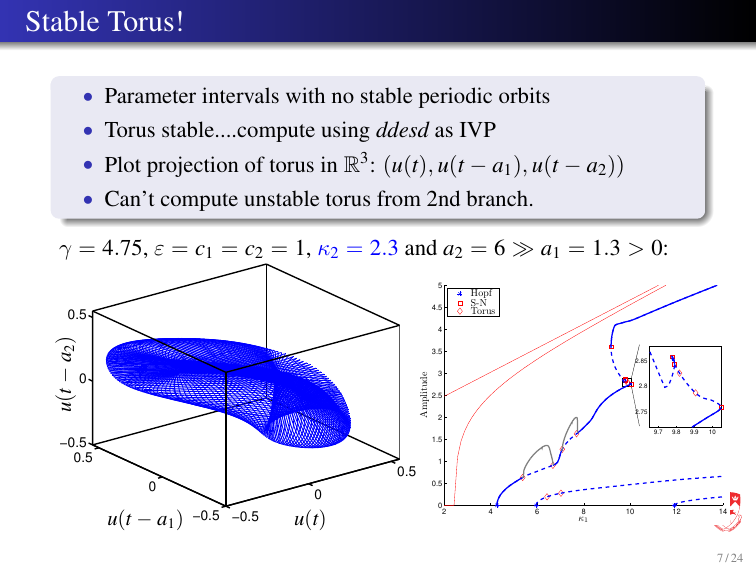}\includegraphics[scale=0.71]{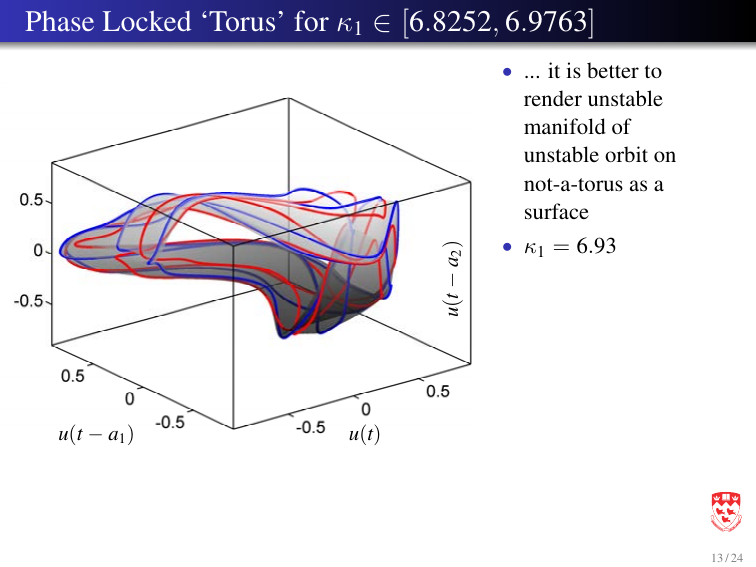}
\caption{Solutions of \eqref{eq:twostatedep} 
projected onto $(u(t),u(t-a_1),u(t-a_2))$-space. 
Parameter values are
$\gamma=4.75>\kappa_2=3.0$, $a_1=1.3$, $a_2=6.0$ and $c_1 = c_2 =1.0$.
(a) With $\kappa_1=4.44$ a quasi-periodic torus is observed. 
(b) For $\kappa_1=6.93$, a 
1:4 phase-locked torus-like object (grey),
with embedded stable (blue) and saddle (red)
periodic orbits. (\textcopyright SIAM; Reproduced from \cite{CHK17} with permission.)}
\label{fig:tori}
\end{figure}

Figure~\ref{fig:tori} shows that \eqref{eq:twostatedep} generates stable and saddle-stability periodic orbits
as well as quasi-periodic and phase-locked invariant tori. This is quite remarkable, as 
with $c_1=c_2=0$ the DDE is linear, for which solutions typically decay (when all the characteristic values have negative real part) or blow up
(when a characteristic value has positive real part).
It is the nonlinearity in \eqref{eq:twostatedep} that results in all of the interesting dynamics, and that nonlinearity arises purely from the state-dependency of the delays.

Approximating state-dependent delays by constant delays would suppress dynamics generated by the state-dependent delays. This is particularly dramatic in the case of \eqref{eq:twostatedep} where there is no other source of nonlinearity. Interesting dynamics may remain when a system is still
nonlinear with constant delays, but without studying the state-dependent delay case it would be impossible to know if important dynamical features had been lost.
Only considering constant delays
may result in oversimplified models that fail to capture the actual dynamics of the underlying system. 
Thus, just as it is important to reflect on whether delays are needed or can be neglected when modelling dynamical processes,  it is also important
to consider when constant delays suffice,
and when state-dependent delays are required.

\CCLsubsubsection{Existence and Uniqueness with Discrete State-Dependent Delays}

For constant discrete delay DDEs existence
and uniqueness of solutions follows directly from the ODE theory. Equation~\eqref{eq:ddde} with initial function defined by \eqref{eq:rfdeic} can be solved for $t\in[t_0,t_0+\tau]$ 
as a nonautonomous ODE since
$u(t-\tau)=\phi(t-\tau)$ is a known function. Having solved to $t_0+n\tau$,
the method can be repeated to solve to $t_0+(n+1)\tau$, hence this technique is known as the \emph{method of steps}. 

\sloppy{Existence and uniqueness is more problematical for state-dependent DDE
problems.} We already saw above that for \eqref{eq:twostatedep} solutions may terminate because a delay becomes an advance. Nonuniqueness of solutions is also possible, as the following example of \cite{Winston70} shows.
Let
\begin{gather} \label{eq:Winstonu}
\udot(t) = -u(t-|u(t)|), \quad u(t)=\phi(t) \;\; t\leq0,\\  \label{eq:Winstonphi}
\phi(t) = \left\{\begin{array}{cl}
-1 & t\leq-1,\\
\tfrac32(t+1)^{1/3}-1 & t\in(-1,-7/8],\\
\tfrac{10}{7}t+1 & t\in(-7/8,0].
\end{array}\right.
\end{gather}
This problem is easily seen to have two solutions given by
\begin{gather*}
u(t)=\left\{\begin{array}{ll}
1+t, & t\geq0\\
1+t-t^{3/2}, & t\in[0,\tfrac14].
\end{array}\right.
\end{gather*}
Notice that for the first solution $t-|u(t)|=-1$, 
while for the second solution $t-|u(t)|=-1+t^{3/2}\in(-1,-7/8]$. 
Thus, the two solutions access different segments of the 
initial function $\phi(t)$, and this leads to the non-uniqueness.

\cite{Driver63} resolved the existence and uniqueness theory for discrete state-dependent DDEs. 
Consider the differential equation for $u(t)\in\R^d$:
\begin{align} \label{eq:driverdde}
   \dot u(t) & = f\bigl( t,u(t),u(\alpha_1(t,u(t))),\ldots, u(\alpha_N(t,u(t))) \bigr), \quad t\geqslant t_0,  \\ \label{eq:driverphi}
   u(t) & = \varphi(t), \hspace*{14em} t\in[t_0-\tau_0,t_0], 
\end{align}
where $\alpha_j(t,u(t))=t-\tau_j(t,u(t))$
is such that 
\be \label{eq:deldels}
t_0-\tau_0\leq\alpha_j(t,u(t))\leq t.
\ee

The following theorems paraphrase the results of \cite{Driver63}, see the original paper for full details.

\begin{theorem}[Extended Existence Theorem \cite{Driver63}] \label{thm:Driverexist}
Let $f$, $\alpha_j$ and $\phi$ be continuous,
and $\alpha_j$ satisfy \eqref{eq:deldels}, then
there exists a solution $u(t)$ 
of \eqref{eq:driverdde},\eqref{eq:driverphi}
for 
$t\in[t_0,T)$ with either
$$T=\infty, \qquad \textrm{or} \qquad \limsup_{t\to T}\Big(\max\{\|u(t)\|,\|u(\alpha_j(t))\|\}\Big)=\infty. \eqno{\qed}$$ 
\end{theorem}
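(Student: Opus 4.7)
The plan is to prove local existence via a Schauder fixed-point argument applied to the integrated-form operator, and then extend to a maximal interval via a standard continuation argument that yields the stated dichotomy for $T$.

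First I would set up the space $X_h=\{u\in C([t_0-\tau_0,t_0+h],\R^d):u\equiv\phi\text{ on }[t_0-\tau_0,t_0]\}$ for small $h>0$, and introduce the Picard-type operator
\[
(Tu)(t) = \phi(t_0) + \int_{t_0}^{t} f\bigl(s,u(s),u(\alpha_1(s,u(s))),\ldots,u(\alpha_N(s,u(s)))\bigr)\,ds,\quad t\in[t_0,t_0+h],
\]
with $(Tu)(t)=\phi(t)$ on $[t_0-\tau_0,t_0]$. Condition \eqref{eq:deldels} guarantees $\alpha_j(s,u(s))\in[t_0-\tau_0,s]$, so every evaluation of $u$ lies in its domain, and fixed points of $T$ in $X_h$ are exactly local solutions of \eqref{eq:driverdde}--\eqref{eq:driverphi}.

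Second, I would choose a closed ball $B\subset X_h$ of radius $r$ around the trivial extension of $\phi$, bound $\|f\|\leq M$ on the corresponding compact parameter set by continuity, and shrink $h$ so that $Mh\leq r$; this yields $T(B)\subset B$. The image $T(B)$ consists of $M$-Lipschitz functions, so by Arzel\`a--Ascoli it is relatively compact in $C([t_0-\tau_0,t_0+h],\R^d)$. Schauder's fixed-point theorem then produces a solution on $[t_0,t_0+h]$, provided $T$ is continuous on $B$. For continuation, let $[t_0,T)$ be the maximal interval of existence; if $T<\infty$ and the $\limsup$ in the statement is finite, then $u$ and all $u(\alpha_j(\cdot))$ stay in a bounded set, so $\udot=f(\cdots)$ is bounded, whence $u$ extends continuously to $[t_0,T]$ and the local-existence step applied at $T$ with history $u|_{[T-\tau_0,T]}$ extends the solution past $T$, contradicting maximality.

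The main obstacle is verifying continuity of $T$ on $B$, i.e.\ of the composition $u\mapsto u(\alpha_j(\cdot,u(\cdot)))$, under only continuity (not Lipschitz regularity) of $f$ and the $\alpha_j$. Given $u_n\to u$ uniformly, uniform continuity of $\alpha_j$ on a compact set gives $\alpha_j(s,u_n(s))\to\alpha_j(s,u(s))$ uniformly in $s$; combined with the equicontinuity of $B$ (which makes $u$ uniformly continuous) and a triangle-inequality splitting to also absorb $u_n-u$, one obtains $u_n(\alpha_j(s,u_n(s)))\to u(\alpha_j(s,u(s)))$ uniformly, and then uniform continuity of $f$ delivers $Tu_n\to Tu$. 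This is the one step where the state-dependence of the delays genuinely bites, and it is precisely what forces the use of Schauder rather than Banach's fixed-point theorem, consistent with only continuity (rather than Lipschitz continuity) being assumed on the data.
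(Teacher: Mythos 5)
The paper does not prove this theorem: it is stated as a paraphrase of a result of \cite{Driver63}, with the reader referred to the original paper for details. Your argument --- local existence via a Schauder fixed point for the Picard integral operator, relative compactness from the uniform Lipschitz bound plus Arzel\`a--Ascoli, continuity of the operator handled by splitting $u_n(\alpha_j(s,u_n(s)))-u(\alpha_j(s,u(s)))$ using uniform continuity of $\alpha_j$ and of $u$, and then a continuation argument giving the dichotomy --- is essentially Driver's own proof, and it is sound; you correctly identify the continuity of the composition $u\mapsto u(\alpha_j(\cdot,u(\cdot)))$ as the only place where the state dependence genuinely matters and as the reason Schauder rather than Banach is needed. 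Two minor points worth making explicit: since solutions need not be unique under mere continuity, ``the maximal interval of existence'' should be replaced by the existence of a noncontinuable extension of a given local solution (a Zorn's lemma argument); and under \eqref{eq:deldels} with $\tau_0$ finite and $\phi$ continuous on the compact interval $[t_0-\tau_0,t_0]$, the $\|u(\alpha_j(t))\|$ term in the blow-up alternative is in fact controlled by $\sup_{[t_0-\tau_0,T)}\|u\|$, so it carries independent content only in Driver's more general setting of unbounded delays, which the paper mentions but does not use.
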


\begin{theorem}[Uniqueness Theorem \cite{Driver63}] \label{thm:Driveruniq}
If $f$, $\alpha_j$ and $\phi$ are locally Lipschitz
and $\alpha_j$ satisfies \eqref{eq:deldels}, 
the solution $u(t)$ of \eqref{eq:driverdde}, \eqref{eq:driverphi}
is unique and has continuous dependence on 
$\phi$, $f$ and $\alpha_j$.\hfill\qed
\end{theorem}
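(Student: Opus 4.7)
The plan is to prove uniqueness via a Gronwall-type comparison on short intervals and then extend by continuation; continuous dependence will follow from the same machinery by adding inhomogeneous driving terms. Existence is already supplied by Theorem~\ref{thm:Driverexist}, so the work is in the comparison estimate for two solutions $u$ and $v$ sharing initial data $\phi$ on $[t_0-\tau_0, t_0]$.

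First I would fix $\delta>0$ small enough that both $u$ and $v$ remain inside a common compact set on $[t_0, t_0+\delta]$, and extract local Lipschitz constants $L_f$, $L_\alpha$, $L_\phi$ for $f$, the $\alpha_j$, and $\phi$ on the relevant compacta. Continuity of $f$ on this compact set gives a uniform bound $K\geq\|\udot(t)\|,\|\vdot(t)\|$ on $[t_0, t_0+\delta]$, so both $u$ and $v$ are Lipschitz in $t$ with constant $M:=\max(K, L_\phi)$ throughout $[t_0-\tau_0, t_0+\delta]$. This is the pivotal observation: although $u$ and $v$ are the objects we are trying to control, they are automatically Lipschitz in $t$ on any compact interval of existence.

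The core estimate is the decomposition
\begin{equation*}
\|u(\alpha_j(t,u(t)))-v(\alpha_j(t,v(t)))\| \leq M L_\alpha\|u(t)-v(t)\| + \sup_{s\in[t_0,t]}\|u(s)-v(s)\|,
\end{equation*}
obtained by inserting $\pm u(\alpha_j(t,v(t)))$, using Lipschitzness of $u$ and of $\alpha_j$ on the first summand and the supremum bound on the second; the history segment contributes nothing because $u=v=\phi$ on $[t_0-\tau_0, t_0]$. Combining with the local Lipschitz bound for $f$, integrating the DDE, taking the supremum over $[t_0,t]$ on the left, and applying Gronwall's inequality yields $u\equiv v$ on $[t_0,t_0+\delta]$. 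A standard continuation argument then extends uniqueness to the maximal interval of existence.

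The main obstacle is exactly the term $\|u(\alpha_j(t,u(t)))-u(\alpha_j(t,v(t)))\|$: controlling it demands a \emph{deterministic} Lipschitz constant for $u$ as a function of time across both the history segment and the evolved segment. This is why the hypothesis that $\phi$ itself is locally Lipschitz is essential; Winston's example \eqref{eq:Winstonu}--\eqref{eq:Winstonphi} fails this property precisely at $t=-1$, where $\tfrac32(t+1)^{1/3}$ has unbounded slope, and that is exactly where uniqueness breaks down. For continuous dependence, the same decomposition applied to $u$ and a solution $\tilde u$ of a perturbed problem with data $\tilde\phi$, $\tilde f$, $\tilde\alpha_j$ yields an inhomogeneous Gronwall inequality with driving terms proportional to $\|\phi-\tilde\phi\|_\infty$, $\|f-\tilde f\|_\infty$, and $M\|\alpha_j-\tilde\alpha_j\|_\infty$, giving the asserted continuous dependence estimate.
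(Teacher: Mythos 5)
The paper gives no proof of this theorem --- it is stated as a paraphrase of Driver's result with a citation in lieu of an argument --- and your Gronwall-type comparison is essentially the standard argument that Driver's proof follows. Your identification of the key point is correct: the composed term $\|u(\alpha_j(t,u(t)))-u(\alpha_j(t,v(t)))\|$ forces you to use a Lipschitz-in-$t$ bound on the solution itself, which holds for $t\geq t_0$ by boundedness of $f$ on compacta but on the history segment only because $\phi$ is assumed Lipschitz, exactly the hypothesis violated in Winston's example.
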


Theorem~\ref{thm:twodelbd} is proved by applying Theorem~\ref{thm:Driverexist} and~\ref{thm:Driveruniq}, once it is established that the solutions are bounded and \eqref{eq:deldels} holds.
The example \eqref{eq:Winstonu}, \eqref{eq:Winstonphi} satisfies the conditions of Theorem~\ref{thm:Driverexist}, but Theorem~\ref{thm:Driveruniq} does not apply because 
$\phi$ is not Lipschitz. Indeed $\dot\phi(t)$ becomes unbounded as $t\to-1$ with $t\in(-1,-7/8)$.

Notice that the solution $u(t)=1+t$ of 
\eqref{eq:Winstonu}, \eqref{eq:Winstonphi} satisfies
$t-\tau=t-|u(t)|=-1$ for all $t\geq0$. In our experience,
the nasty examples of state-dependent delay DDEs 
behaving badly typically involve 
$\frac{\phantom{t}d}{dt}(t-\tau)\leq0$, as in this example. Ensuring that 
$\alpha_j(t,u(t))=t-\tau_j(t,u(t))$ is an increasing function of $t$ on solutions 
resolves many issues.

\CCLsection{Threshold Delays}
\label{sec:thresdels}

Threshold delays arise in systems where a transport process occurs over a fixed distance at a variable speed. Such delays are also common in biological systems, for example, when maturation rates depend on nutrient availability. 

We have already seen examples of threshold delays in \eqref{eq:tauNM} and \eqref{eq:tauMtauI}, and introduced a threshold delay \eqref{eq:thres}
for the general DDE \eqref{eq:ddde}. In this section we consider 
a state-dependent delay $\tau$ implicitly defined by the threshold condition
\begin{equation} \label{eq:thre_scalarDE}
	a 
    = \int_{t-\tau}^{t} V(u(s)) ds
    =\int^t_{t-\tau (u_t)}\hspace*{-1.5em}V(u(s))ds
    =\int^0_{-\tau (u_t)}\hspace*{-1.5em}V(u_t(\theta))d\theta
\end{equation}
where $a>0$ is a fixed constant (the `distance') and $V$ is a given function (the 'speed') of the state variable $u(t)$.

So far we have used the first of the three forms of \eqref{eq:thre_scalarDE}. Since the delay $\tau$ depends on the solution $u$ for all values of $t\in[t-\tau,t]$, it is properly a function of $u_t$ (defined in \eqref{eq:ut}), and then depending on how much RFDE notation you want to use,
the second and third forms in \eqref{eq:thre_scalarDE} can be used.

\CCLsubsection{A scalar DDE with threshold delay}
\label{subsec:scalthresdde}

Consider the
scalar threshold DDE model 
\begin{equation}\label{eq:scalarDE}
	\udot(t)  = \beta e^{-\mu\tau(u_t)}{\frac{V(u(t )))}{V(u(t-\tau(u_t)))}}g(u(t -\tau (u_t ))) - \gamma u (t)
\end{equation}
with the delay given by \eqref{eq:thre_scalarDE}, and $u_t(\theta)$
defined by \eqref{eq:ut}
for $\theta\in[-\tau_{\textit{max}},0]$.

\cite{gedeon2024dynamics} study this model as a reduction of the operon model \eqref{eq:OpM}-\eqref{eq:tauMtauI},
and the special case with 
$g(u)=u$ is considered in 
\cite{smith1993reduction}. This equation models a population with a constant decay/death rate $\gamma$, a production/birth rate $\beta g(u)$,
and maturation/processing time $\tau$. The parameter $\mu$ can either represent a death rate during maturation or, as in the case 
of the operon model, effective dilution due to cell growth.

In the special case where $V(u(t))=V$ is constant, the delay also becomes constant and is given by $\tau=a/V$. 

To ensure well-posedness, it is assumed that the non-constant velocity $V$ is bounded and bounded away from zero, that is
\begin{equation} \label{ineq:vel_bdd}
     0<V_{\textit{min}}=\liminf_{u\geq0}V(u)
     \leq\limsup_{u\geq0}V(u)=V_{\textit{max}}
     <\infty.
\end{equation}
Then since $0<V_{\textit{min}}\leq V(u(t))\leq
V_{\textit{max}}$ it follows that the process described by \eqref{eq:thre_scalarDE}
always proceeds to completion without stalling. That is, 
the threshold delay $\tau=\tau(u_t)$ is also bounded and bounded away from zero with
\begin{equation} \label{ineq:thre_bdd}
	0<\tau_{\textit{min}}=\frac{a}{V_{\textit{max}}}\leq \tau(u_t)\leq 
	\tau_{\textit{max}}=\frac{a}{V_{\textit{min}}}<\infty.
\end{equation}

Differentiating the threshold condition \eqref{eq:thre_scalarDE} with respect to $t$ using the Leibniz rule, we get
\begin{equation} \label{eq:Leibdiff}
	0=V(u(t))-V(u(t-\tau(u_t)))\frac{\phantom{t}d}{dt}(t-\tau(u_t)). 
\end{equation} 
Then $t-\tau(u_t)$ is monotonically increasing in $t$ along solutions since
\begin{equation} \label{ineq:tminustau_up}
	0 < \frac{V_{\textit{min}}}{V_{\textit{max}}}\leq\frac{V(u(t))}{V(u(t-\tau(u_t)))}
	= \frac{\phantom{t}d}{dt}(t-\tau(u_t))  = 1-\frac{\phantom{t}d}{dt}\tau(u_t).
\end{equation}

We can also use \eqref{ineq:tminustau_up} to replace the threshold condition \eqref{eq:thre_scalarDE} to obtain a system of discrete delay DDEs composed of
\eqref{eq:scalarDE} and 
\begin{equation} \label{eq:taudot} 
 \frac{\phantom{t}d}{dt}\tau
 = 1- \frac{V(u(t))}{V(u(t-\tau))}.
\end{equation}
Given an initial function $\phi$ such that $u_{t_0}=\phi$ for \eqref{eq:scalarDE} then
to solve the same IVP with \eqref{eq:taudot}
instead of the threshold condition \eqref{eq:thre_scalarDE} 
it is necessary to impose the initial condition for $\tau(t_0)$ that 
\begin{equation} \label{eq:thre_scalarDEIC}
\int^{t_0}_{t_0-\tau(t_0)}\hspace*{-1.5em}
 V(u(s)) ds=a.
\end{equation}
Formulated in this way as a discrete delay DDE,
Theorems~\ref{thm:Driverexist} and 
Theorems~\ref{thm:Driveruniq} can be applied to obtain existence and uniqueness of solutions. 

It will be important later, when considering stability and numerical solutions, to note that the two formulations \eqref{eq:scalarDE},\eqref{eq:thre_scalarDE} and  \eqref{eq:scalarDE},\eqref{eq:taudot}
are not actually equivalent. Changing the parameter $a$ in \eqref{eq:scalarDE},\eqref{eq:thre_scalarDE} only alters the initial condition \eqref{eq:thre_scalarDEIC} and not the DDE system
in \eqref{eq:scalarDE},\eqref{eq:taudot}.
Thus, the first problem for all different values of the parameter $a>0$, is embedded in a single copy of the second problem. 
This, of course, arises because the value of the parameter $a$ is lost when the
threshold condition \eqref{eq:thre_scalarDE} is differentiated.

\CCLsubsection{Time Rescaling to Constant Delay} 
\label{subsec:rescaletime}

\cite{smith1993reduction,smith91}
demonstrated that a threshold delay can be transformed into a constant delay through an appropriate time rescaling. This transformation allows standard results from the theory of RFDEs
to be applied.

To illustrate this method using the scalar model \eqref{eq:scalarDE},\eqref{eq:thre_scalarDE}, let $\tilde{t}$ be a new trajectory-dependent time variable defined by
\begin{equation} \label{eq:tildet}
	\tilde{t}=\frac1a\int^t_{0}V(u(s))ds. 
\end{equation}
Then by \eqref{eq:thre_scalarDE} and \eqref{eq:tildet},
\begin{equation} \label{eq:tildet-1} \tilde{t}-1=\frac1a\int^t_{0}V(u(s))ds-\frac1a\int^t_{t-\tau(u_t)}V(u(s))ds=\frac1a\int^{t-\tau(u_t)}_{0}V(u(s))ds.
\end{equation}
Thus, under this change of variable, the original time t is mapped to $\tilde{t}$ via \eqref{eq:tildet} and the delayed time $t-\tau(u_t)$ is mapped to $\tilde{t}-1$ as shown in  \eqref{eq:tildet-1}. 

Consequently, defining $U(\tilde{t})=u(t)$, the system \eqref{eq:scalarDE},\eqref{eq:thre_scalarDE} transforms under the time rescaling into the constant delay DDE
\begin{equation} \label{eq:dUdtildet}
	\frac{dU}{d\tilde{t}}  = a\beta e^{-\mu\tau_0(U_{\tilde{t}})}\frac{g(U(\tilde{t}-1))}{V(U(\tilde{t}-1))}- a\gamma \frac{U(\tilde{t})}{V(U(\tilde{t}))}
\end{equation}
where the history function $U_{\tilde{t}}\in \mathbb{C}([-1,0], \mathbb{R}^+)$ is defined by $U_{\tilde{t}}(s)=U(\tilde{t}+s)$ and the delay term $\tau_0: \mathbb{C}([-1,0], \mathbb{R}^+) \to \mathbb{R}^+$ is defined by
\begin{equation*}
	\tau_0(\phi) = \int_{-1}^{0} V(\phi(s))^{-1} ds. 
\end{equation*}

If $\mu=0$ then the transformation converts the threshold delay problem into a discrete constant delay DDE, which can then be solved using standard methods. However, the time transformation is different for each solution trajectory and may alter the structure of the nonlinearity, potentially complicating the interpretation of the system’s dynamics. 

In cases when $\mu>0$, the transformed equation \eqref{eq:dUdtildet} remains a distributed DDE because of the term $\tau_0(U_{\tilde t})$,
even though the delay in the argument of the state variable becomes constant. 
Several works have used this transformation to study threshold models as distributed constant delay DDEs (see for example \cite{KCP16,TIY18}), but 
often the existence of transformation has been used as an excuse to ignore the threshold delay and treat the delay as if constant.
However, we will show below that a threshold delay system can have very different dynamics than the same DDE
does with a constant discrete delay. 

In systems with two or more delays, 
with at least one threshold delay, 
it is generally impossible to simultaneously transform all delays into constant ones using a common time rescaling. 
Given this limitation, we are motivated to seek alternative analytical and numerical approaches for studying threshold delay DDEs without a time transformation.

\CCLsubsection{The Velocity Ratio In Threshold Delay Models}
\label{sec:velratio}

We have yet to explain the presence of the ratio $V(u(t))/V(u(t-\tau(u_t)))$ in \eqref{eq:scalarDE}, and the similar terms appearing in other examples. We will show here that it is a direct consequence of the threshold delay condition \eqref{eq:thre_scalarDE}.

\begin{figure}[t]
    \centering
    \includegraphics[scale=0.5]{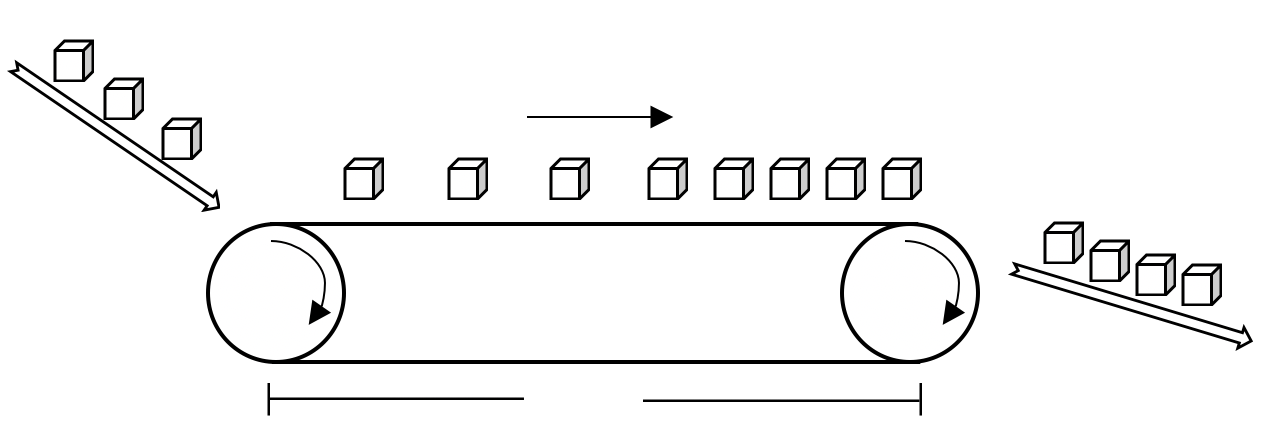}
    \put(-175,7){$a$}
    \put(-182,86){$V(t)$}
\caption{Illustration of a transport process with the conveyor belt analogy. Boxes are placed on the conveyor belt at a constant rate. The boxes towards the right on the conveyor belt are more closely spaced because the speed $V$ of the belt has been increased since they were placed on it.}
\label{fig:conveyor}
\end{figure}

The easiest way to see that this term must be included in the model is to use the conveyor belt analogy illustrated in Figure~\ref{fig:conveyor}.
Consider an airport conveyor belt that moves at a constant speed.
If bags are placed on the belt one every second,
then after the first bag reaches the other end, they will also drop off of the conveyor belt at baggage claim one every second. Notice that the velocity of the conveyor belt and the length of the conveyor belt only affect the delay before the first bag arrives. But once the first bag arrives, the rate that subsequent bags arrive is equal to the rate that they were placed on the conveyor belt.

Now, suppose the conveyor belt is very long, and has adjustable speed. The bags are still placed on the belt at a rate of one a second, but the velocity of the conveyor belt is doubled before the first bag arrives. Once the bags start to arrive, they will do so at a rate of two every second, until such time as all the bags that were placed on the belt when it was moving slowly have arrived.

Clearly, if bags that arrive at time $t$ were placed on the belt at some time $t-\tau$ at rate $R$, then they arrive at time $t$ at rate
\be \label{eq:beltR}
\frac{V(t)}{V(t-\tau)}R,
\ee
where $V(t)$ is the velocity of the belt. 
This is the simplest explanation of the origin of the velocity ratio terms that appear in the threshold delay models. 

We remark that from this argument, the velocity ratio is a consequence of the delay being non-constant, and should appear in any models with non-constant delay. For threshold delays similar to \eqref{eq:thre_scalarDE} the velocity ratio is needed whenever the velocity is non-constant, whether or not the velocity depends on the state of the system. 
 
\begin{figure}[tp!] 
    \centering	
    \includegraphics[scale=0.42]{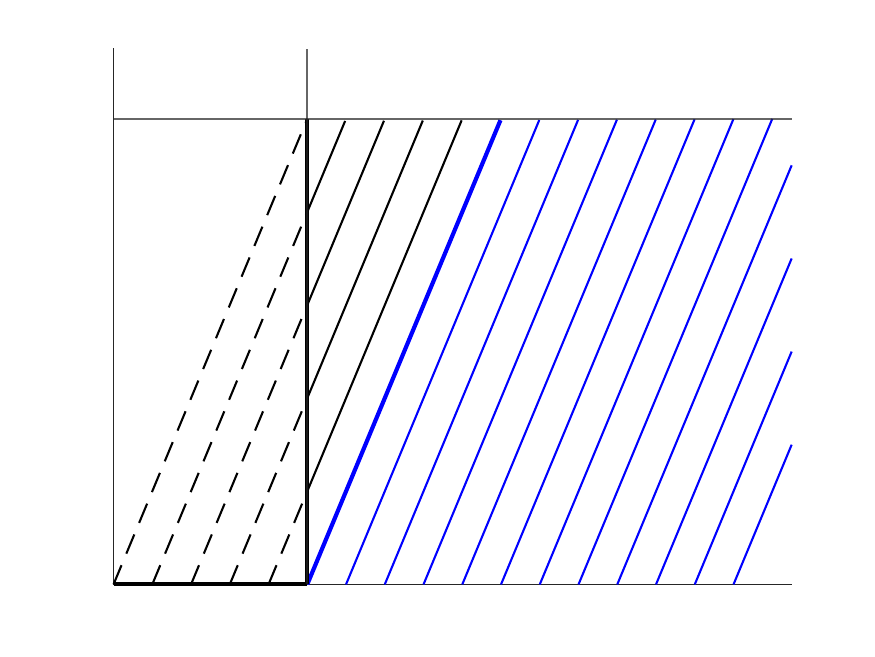}\hspace*{-1em}\includegraphics[scale=0.42]{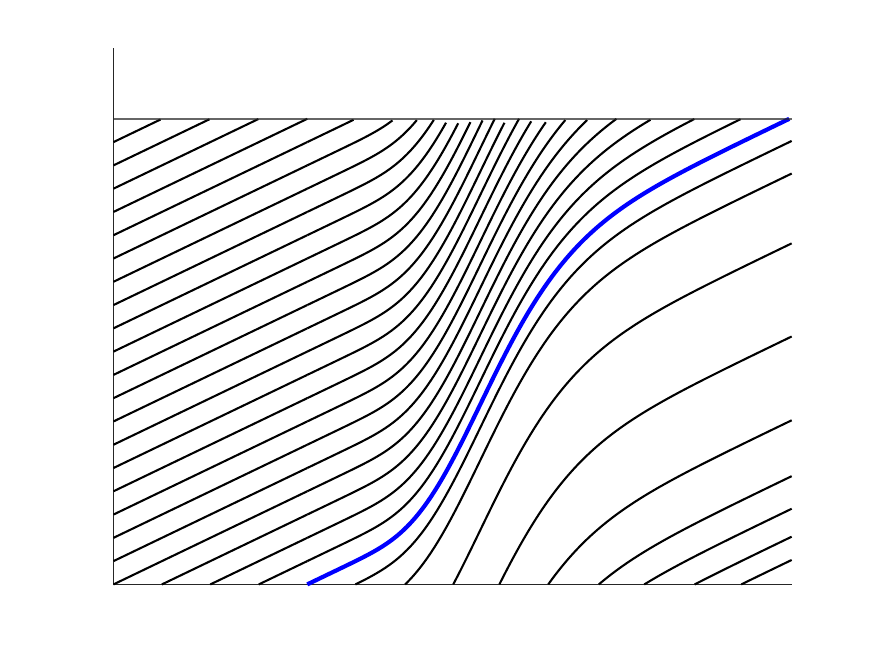}
    \put(-317,118){age}
    \put(-323,106){$a_M$}
    \put(-271,7){$0$}
    \put(-294,8){$\phi$}
    \put(-277,70){$\psi$}
    \put(-175,7){$t$}
    \put(-150,118){age}
    \put(-155,106){$a_M$}
    \put(-108,3){$t-\tau$}
    \put(-7,3){$t$}
    \put(-64,12){\vector(1,0){60}}
    \put(-63,12){\vector(-1,0){38}}
    \put(-227,114){$(a)$}
    \put(-86,114){$(b)$}
   \caption{(a) Illustrating the characteristics of \eqref{eq:aspde} with constant velocity $V$ and the relationship \eqref{eq:psiphi} between the initial function $\phi$ for \eqref{eq:scalarDE}  and the boundary condition $\psi$ for \eqref{eq:aspde}.
    (b) Illustration of maturation with non-constant ageing velocity.} \label{fig:agestr}
\end{figure}

Several authors have used a conveyor 
belt analogy before (see \cite{Bernard2016}), but 
we are not aware of it having been used
to derive \eqref{eq:beltR}.

It is also possible to derive \eqref{eq:scalarDE} as a population model
with the threshold condition \eqref{eq:thre_scalarDE}. 
To do so, consider a population of adults $u$ which reproduce at rate $\beta g(u)$ and die at rate $\gamma$.
We introduce a state-dependent delay $\tau$ for juveniles to mature, and let $A(t)$ be the rate that juveniles mature into adults. Then the adult population dynamics are governed by
\be \label{eq:ueq}
\udot(t)=A(t)-\gamma u(t).
\ee

We model the juvenile population using an age-structured PDE 
\be \label{eq:aspde}
\frac{\partial n}{\partial t}+V(u(t))\frac{\partial n}{\partial a} = -\mu n(t,a), \quad a\in[0,a_M],\,t\geq0,
\ee
where $n(t,a)$ denotes the density of juveniles of age $a$ at time $t$. It is convenient to decouple age from time, and to consider the maturation age $a_M$ to be fixed, while allowing a state-dependent ageing rate $V(u(t))$, with the age being the integral of this velocity $V$.
The form of $V(u)$ will depend on the application and could be decreasing (if adults compete with juveniles for resources) or increasing (in a cooperative society) or a combination of both, but does not matter for our derivation here. The constant $\mu>0$ describes the death rate during maturation, in order to derive the model \eqref{eq:scalarDE}, but one could equally consider $\mu$ with the opposite sign so that there is proliferation during maturation as happens during hematopoiesis; such a scenario with a state-dependent $\mu$ is considered in
\cite{craig2016}.

It is simple to solve along characteristics since
$$-\mu n(t,a)=\frac{\partial n}{\partial t}+V(u(t))\frac{\partial n}{\partial a}=
\frac{\partial n}{\partial t}+\frac{da}{dt}\frac{\partial n}{\partial a}=\frac{dn}{dt}.
$$
Thus
\be \label{eq:ntaM}
n(t,a_M)=e^{-\mu\tau(u_t)}n(t-\tau(u_t),0),
\ee
where individuals that complete the maturation process at time $t$ took time $\tau(u_t)$ to do so. 
Since the ageing rate is $V(u(t))$, the delay $\tau(u_t)$ is defined by \eqref{eq:thre_scalarDE} with $a=a_M$.

The characteristics of \eqref{eq:aspde} have slope
$\frac{da}{dt}=V(u(t))>0$, which is independent of the age. This ensures that there are no shocks or rarefaction fans in the PDE, as all the characteristics are parallel for each fixed $t$. Our approach can be generalised (see for example \cite{GW16}) to age and time-dependent ageing, providing care is taken to ensure that the characteristics are well behaved.

Equation \eqref{eq:aspde} needs to be supplied with appropriate boundary conditions for $n(t,0)$ for $t\geq0$ and for $n(0,a)$ for $a\in[0,a_M]$, and determining the correct conditions is not as trivial as one might think.

We already have all of the elements of the population model
\eqref{eq:scalarDE}, \eqref{eq:thre_scalarDE}, except for the velocity ratio term. To get the velocity ratio term its important to realise, as the conveyor belt analogy illustrates, that $n(t,a_M)$ is a density, and not a flux, and to obtain the correct number of maturing individuals it is necessary to take account of the ageing velocity at the boundaries of the domain. The following arguments are adapted from a more complicated model of blood cell production in \cite{craig2016}.

\begin{figure}
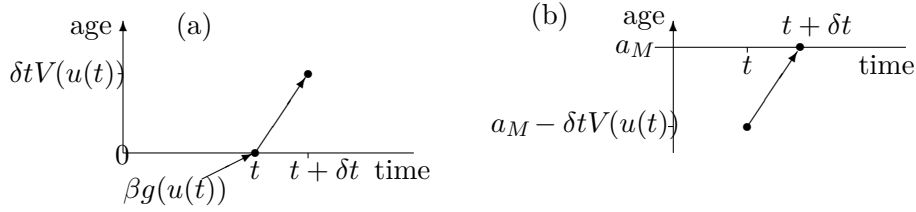

\centering
\mbox{}\hspace*{-9em}
\put(10,0){\line(1,0){110}}
\put(80,-2){\line(0,1){2}}
\put(60,0){\circle*{3}}
\put(80,30){\circle*{3}}
\put(-10,45){age}
\put(30,45){(a)}
\put(10,0){\vector(0,1){50}}
\put(7,-4){$0$}
\put(40,-10){\vector(2,1){19}}
\put(60,0){\vector(2,3){19}}
\put(58,-10){$t$}
\put(73,-10){$t+\delta t$}
\put(105,-10){time}
\put(-33,27){$\delta tV^{\!}(u(t))$}
\put(10,30){\line(-1,0){2}}
\put(10,-17){$\beta g(u(t))$}
\mbox{}\hspace*{17em}
\put(25,40){\line(1,0){95}}
\put(60,38){\line(0,1){2}}
\put(60,10){\circle*{3}}
\put(80,40){\circle*{3}}
\put(13,49){age}
\put(-22,49){(b)}
\put(32,0){\vector(0,1){50}}
\put(10,38){$a_M$}
\put(60,10){\vector(2,3){19}}
\put(58,30){$t$}
\put(73,45){$t+\delta t$}
\put(102,30){time}
\put(-37,8){$a_{M\!}-\delta tV^{\!}(u(t))$}
\put(32,10){\line(-1,0){2}}
    \caption{Schematic of the dynamics at the (a) lower and (b) upper bounds of juvenile ageing that leads to equations 
    \eqref{eq:fluxlb} and \eqref{eq:fluxub}.}
    \label{fig:flux}
\end{figure}

To leading order, juveniles born at time $t$ at rate $\beta g(u(t))$ will have age $\delta t V(u(t))$ at time
$t+\delta t$, as illustrated in Figure~\ref{fig:flux}. Thus we require
\be \label{eq:fluxlb}
\int_{0}^{\delta t V(u(t))}\hspace{-1.5em} n(t+\delta t,a)da-
\int_{t}^{t+\delta t}\hspace{-1.5em}\beta g(u(t))dt=\cO(\delta t^2),
\ee
and hence
\begin{align*} \notag
V(u(t))n(t,0)
&=\lim_{\delta t\to 0}\frac{1}{\delta t}\int_{0}^{\delta t V(u(t))}\hspace{-1.5em} n(t+\delta t,a)da\\
&=\lim_{\delta t\to 0}\frac{1}{\delta t}\int_{t}^{t+\delta t}\hspace{-1.5em}
\beta g(u(t))dt=\beta g(u(t)).
\end{align*}
Thus the correct lower boundary condition for the age-structured PDE is
\be \label{eq:nt0}
n(t,0)=\frac{1}{V(u(t))}\beta g(u(t)).
\ee
Care needs to be taken at the upper boundary too. Combining \eqref{eq:ntaM} and \eqref{eq:nt0} we obtain
\be \label{eq:ntaMsol}
n(t,a_M)=
\frac{1}{V(u(t-\tau(u_t)))}\beta
e^{-\mu\tau(u_t)}g(u(t-\tau(u_t))),
\ee
but this is not the rate $A(t)$ that juveniles mature into adults. 
To leading order, juveniles of age $a\in[a_M-\delta t V(u(t)),a_M]$ will mature during the time interval $[t,t+\delta t]$, as illustrated in Figure~\ref{fig:flux}, 
hence 
\be \label{eq:fluxub}
\int_{a_M-\delta t V(u(t))}^{a_M}\hspace{-1.5em} n(t,a)da-
\int_{t}^{t+\delta t}\hspace{-1em}A(t)dt=\cO(\delta t^2).
\ee
Thus
\begin{align} \notag
A(t)&=\lim_{\delta t\to 0}\frac{1}{\delta t}\int_{t}^{t+\delta t}\hspace{-1em}A(t)dt
=\lim_{\delta t\to 0}\frac{1}{\delta t}\int_{a_M-\delta t V(u(t))}^{a_M}\hspace{-2.5em} n(t,a)da
 =V(u(t))n(t,a_M) \notag \\
& =\frac{V(u(t))}{V(u(t-\tau(u_t)))}\beta
e^{-\mu\tau(u_t)}g(u(t-\tau(u_t))). \label{eq:fluxaM}
\end{align}
Finally, substituting \eqref{eq:fluxaM} into \eqref{eq:ueq} gives the scalar DDE model \eqref{eq:scalarDE} including the velocity ratio term. In \eqref{eq:aspde} we consider a fixed maturation age $a_M$, but 
\cite{cassidy2019equivalences} shows that 
the same ratio arises  
when the maturation condition is a random variable.

We did not deal with the boundary condition 
of \eqref{eq:aspde} at $t=0$ yet. If we let
$n(0,a)=\psi(a)$ then as illustrated in
Figure~\ref{fig:agestr}(a) the DDE
formulation \eqref{eq:scalarDE} would only be valid when $t-\tau(u_t)\geq0$,
otherwise the characteristics hit $t=0$ with a nonzero age and \eqref{eq:ntaM} would not be valid. 

\cite{smith1993reduction} defined a second differential equation for the initial time interval, but it is much more convenient to work with \eqref{eq:scalarDE}
for all $t\geq0$ along with a suitable initial history function \eqref{eq:rfdeic}. This can be done, as suggested in Figure~\ref{fig:agestr}(a),
by extending the characteristics from $t=0$ back to age $a=0$. Then solving along the extended characteristic similar to \eqref{eq:ntaM}
and using \eqref{eq:nt0} we obtain
\begin{align} \notag
\psi(a) &=n(0,a)=
e^{-\mu\tau(a)}n(-\tau(a),0)
=\frac{e^{-\mu\tau(a)}}{V(u(-\tau(a)))}\beta g(u(-\tau(a)))\\
&=\frac{e^{-\mu\tau(a)}}{V(\phi(-\tau(a)))}\beta g(\phi(-\tau(a))). \label{eq:psiphi}
\end{align}
where the delay $\tau(a)$ is defined by \eqref{eq:thre_scalarDE} with $t=0$. Equation \eqref{eq:psiphi} along with \eqref{eq:thre_scalarDE} defines a relation between the initial condition $n(0,a)=\psi(a)$ for the age-structured PDE \eqref{eq:aspde} and the history function $\phi$ for the DDE \eqref{eq:scalarDE} that allows us to apply the DDE for $t\geq0$ with an initial function $\phi$. In reality nobody uses \eqref{eq:psiphi}, as in dynamical systems we are usually interested in the asymptotic behaviour and the initial functions are often chosen quite arbitrarily, often as constant functions.

\CCLsection{Delay Differential Equations as Dynamical Systems}
\label{sec:dynsys}

Consider the DDE IVP
\begin{gather} 
\udot(t)=f(u(t),u(t-\tau)) \label{eq:dde}  \\
u(\theta)=\phi(\theta), \quad \theta\in[-\tau_\textit{max},0],   \label{eq:ddephi}
\end{gather}
where $u(t)\in\R^d$. Here we take the initial time $t_0=0$.
The delay $\tau$ may be constant, discrete-state-dependent or defined by a threshold 
condition \eqref{eq:thre_scalarDE}. 
Informally we sometimes write the discrete-state-dependent delay as $\tau=\tau(u(t))$,
but in the RFDE notation \eqref{eq:ut} this is more properly stated as  $\tau=\tau(u_t(0))$.
Everything in this section also applies to problems with multiple delays, but we
consider \eqref{eq:dde} with one delay for notational simplicity.

We will assume that \eqref{eq:dde} is sufficiently well behaved so that the existence and uniqueness 
results of Theorems~\ref{thm:Driverexist}-\ref{thm:Driveruniq} apply. In particular this requires that
$f=f(u,v)$ is a locally Lipschitz continuous function
$f:\R^d\times\R^d\to\R^d$  of its arguments, and also that 
$\phi$ is Lipschitz. 

The state-dependent delays also need to be well behaved to apply the theorems, in particular
we require $\tau\geq0$. We remark that for specific examples, such as the linear state-dependent delays
considered in Section~\ref{subsec:statedep} this can be ensured either by imposing parameter constraints,
or by modifying the definition of the delays suitably. The threshold delay    
\eqref{eq:thre_scalarDE} can never become advanced, and
the differentiated form \eqref{eq:taudot} allows the theory to be applied. 
It also follows from \eqref{ineq:tminustau_up} that $t-\tau$ is a monotonically increasing function of $t$ along any solution trajectory. 

Crucially to apply dynamical systems theory we require $f$ in \eqref{eq:dde} to be autonomous (not to depend explicitly on $t$), and for solutions to be unique. Uniqueness ensures that solutions cannot cross in phase space,
and this together with Lipschitz continuity of $f$ is what renders phase portraits both useful and beautiful for 
rendering dynamics, whether it be the classical Lorenz attractor or the tori shown in Figure~\ref{fig:tori}.

Recalling that the phase space for a dynamical system is essentially the space of initial conditions, for the
DDE \eqref{eq:dde} with a constant delay $\tau$, we already argued after \eqref{eq:rfdeic} that
the correct space in which to consider the function elements is the space of continuous functions $C=C\bigl([-\tau,0],\mathbb{R}^d\bigr)$, because of the discontinuity in the derivative of $u_t(\theta)$ at 
$\theta=-t$ for $t\in(0,\tau)$. However, with $\phi$ and $f$ both Lipschitz continuous, 
the discontinuity in the derivative of $u_t$ leaves it Lipschitz too, so we should really consider
$C$ to be the space of Lipschitz continuous functions. 

The space $C$ includes all polynomials, so the phase space is immediately infinite dimensional, 
even for scalar problems with $u(t)\in\R$. This means that even scalar DDEs can potentially display chaotic dynamics (which scalar first order ODEs cannot), so DDEs are great for 
generating interesting and rich dynamics. On the other hand, its problematical for the mathematical analysts, as any ODE proofs using the fact that closed and bounded sets in $\R^d$ are compact, will break down for DDEs because that is not true in an infinite dimensional space. As discussed in Section~5.3 of \cite{Smith11}, there is a work around using equicontinuity and the Arzel\`a-Ascoli Theorem, but we will not worry too much about such details here.

Another issue with an infinite dimensional phase space, is how to represent solutions visually. For ODEs in two or three space dimensions its usual to display solutions as parametrised curves in phase space, but for a DDE we will need to project the phase portrait into finite dimensions to display it.
Its not clear how to best do this. When $u(t)$ is analytic, the solution will be equal to its Taylor series, so the most obvious and natural projection would be to project the function $u(t)$ onto the coefficients of its Taylor series evaluated at $t$, and so plot the first few coordinates of $(u(t),\udot(t),\uddot(t),\ldots)$ or 
$(u(t),\udot(t)/1!,\uddot(t)/2!,\ldots)$ as parametrised curves. But this is problematical because the breaking points
mean that generally solutions are not analytic, and even when they are the derivatives may not scale nicely. 

\cite{GlassMackey79} had the idea of plotting $u(t-\tau)$ against $u(t)$ for the Mackey--Glass equation, and the results are so pleasing that nobody seems to have asked whether or why that may be the best projection.  It is now standard to draw $(u(t),u(t-\tau))$ (or sometimes $(u(t),u(t-\tau_1),u(t-\tau_2))$ when there are multiple delays) as a curve in $\R^{2d}$. 
Interestingly, the DDE equation \eqref{eq:dde} can be regarded as a
map from $(u(t),u(t-\tau))$ to $(u(t),\udot(t))$ so these two phase space projections are closely related.

To further complicate our discussion of the phase space $C$, notice that when
the solution is not continuously differentiable at $t=t_0$; 
$$\udot(t_0^-)=\dot{\phi}(0)\ne f(\phi(0),\phi(-\tau))=\udot(t_0^-)$$
further discontinuities in higher derivatives of the solution are generated at subsequent points $t_0+k\tau$ for $k\in\N$, since differentiating \eqref{eq:dde} leads to
\be \label{eq:ds:uddbkpt}
\uddot(t)=\udot(t)\,f_u(u(t),u(t-\tau))+\udot(t-\tau),f_v(u(t),u(t-\tau)),
\ee
where $f_u$ and $f_v$ denote the derivatives of $f(u,v)$ with respect to its first and second arguments.
Thus generally (more precisely when $f_v(u(t_0+\tau),u(t_0))\ne0$), it follows that 
$\ddot{u}(t_0+\tau)$ is discontinuous when $\dot{u}(t_0)$ is discontinuous. Similarly it follows that
$u^{(k+1)}(t)$ may be discontinuous at $t=t_0+k\tau$ for $k\in\N$. These points where some derivative of $u(t)$ is discontinuous are called \emph{breaking points}. The smoothing property associated with them means that 
along any solution trajectory $u(t)$ is $C^{k+1}$ for all $t\geq t_0+k\tau$. If there are two delays $\tau_1$ and $\tau_2$ there will be potential breaking points at  $t=t_0+j\tau_1+k\tau_2$ for $j,k\in\N_0$ (though if $\tau_1$ and $\tau_2$ are rationally related there will be some repetitions in this list).

Additional issues arise when considering state-dependent DDEs as dynamical systems. The breaking points will still be important, but with a discrete state-dependent delay $\tau=\tau(u(t))$ the first breaking point after $t_0$ will be defined by $t=\xi$ where $\xi-\tau(u(\xi))=t_0$ and subsequent breaking points similarly. Thus the location of the breaking points depends on the solution, and must be found together with the solution of the DDE \eqref{eq:dde}.
We will discuss how to do this for numerical solutions in Section~\ref{sec:bps}.

Some care should also be taken with the definition of the phase space when $\tau$ is state-dependent. The existence and uniqueness results in Theorems~\ref{thm:Driverexist}-\ref{thm:Driveruniq}
only require that $t-\tau(u(t))\geq t-\tau_0$ for all $t\geq 0$, but this allows for larger and larger delays as $t$ increases. It is usual to define a maximal delay $\tau_\textit{max}$ and consider  
$C$ to be the space of (Lipschitz) continuous functions $C=C\bigl([-\tau_{\textit{max}},0],\mathbb{R}^d\bigr)$.
In the current work we will consider $\tau_\textit{max}<\infty$, but already \cite{Driver63} considered the possibility of unbounded delays. 

Defining a maximal delay is somewhat unsatisfactory as it will lead us to define function segments $u_t(\theta)=u(t+\theta)$ for $\theta\in[-\tau_\textit{max},0]$ at each point of the solution, even if $\tau(u(t))\ll\tau_\textit{max}$. If $\tau_\textit{max}>\tau(u(t_0))$ it is then possible to define two initial functions $\phi_1$, $\phi_2$ with $\phi_1(t)\ne\phi_2(t)$ for $t<-\tau(u(t_0))$ but
$\phi_1(t)=\phi_2(t)$ for $t\in[-\tau(u(t_0)),0]$, so that the two different initial functions lead to the same solution of \eqref{eq:dde}-\eqref{eq:ddephi}; which is usually not possible in Lipschitz continuous dynamical systems because of results that bound the rate that trajectories can converge towards one another.
When $t-\tau$ is a monotonically increasing function of $t$ along all solution trajectories, as happens for the threshold delay \eqref{eq:thre_scalarDE} when $V_\textit{min}>0$, it is possible to ignore the spurious extra initial data that is not needed. It is a little messy to try to codify this into a definition of $C$, 
which will  thus not do.

\CCLsubsection{Invariant Sets and Bifurcations}
\label{sec:invsets}

To consider \eqref{eq:dde} as a dynamical system, let $u_t$ be the function element
\be \label{eq:utds}
u_t(\theta)=u(t+\theta), \quad \theta\in[-\tau_\textit{max},0],
\ee
where the delay $\tau(u_t)$ is either defined by a discrete state-dependent delay
$\tau=\tau(u_t(0))$ (in the notation of \eqref{eq:ut})
or by a threshold condition \eqref{eq:thre_scalarDE}.

We define the \emph{evolution operator} $S(t)$ for the dynamical system on $C$ by
\be \label{eq:St}
S(t)u_s=u_{s+t}, \quad t\geq0, \; s\geq 0=t_0.
\ee
For a solution of \eqref{eq:dde} for $t\geq0$ for some particular initial function $\phi$, we can think of $u_s$ as the restriction of this solution to a particular time interval $[s-\tau_\textit{max},s]$, and regard $S(t)$ as just translating this time window to $[t+s-\tau_\textit{max},t+s]$, so it can be thought of as just convenient shorthand notation for the solution. However, since it can be applied to any initial function $\phi\in C$,
\be \label{eq:Stphi}
S(t)\phi=S(t)u_0=u_{t}, \quad t\geq0,
\ee
it defines a family of mappings $S(t):C\to C$ for all $t\geq0$, which map an initial function $\phi\in C$ to the solution
element $u_t\in C$ that solves \eqref{eq:dde},\eqref{eq:ddephi}. 

The evolution operator $S(t)$ is easily seen to be associative and commutative;
$S(t_1)S(t_2)=S(t_2)S(t_1)=S(t_1+t_2)$ for all $t_1$, $t_2\geq0$, so $S(t)$ defines a semigroup.
Since $S(0)=I$, the identity map on $C$, $S(t)$ actually defines a commutative monoid, however it does not define a group,
because there is no inverse operator, as we have only defined $S(t)$ for $t\geq0$. As we will illustrate later, solving DDEs backwards in time for general initial functions is very  problematical, and leads to faster than exponential blow up. 

With $S(t):C\to C$ defined for $t\geq0$, the mapping is said to define a \emph{semi-dynamical system}. There is well established theory for semi-dynamical systems, which is useful in applications where backwards in time solutions may not be defined, as 
occurs for DDEs and RFDEs \cite{Smith11}, and also for maps defined by numerical methods \cite{SH96}.

The evolution operator allows us to define invariant sets. A set 
$A\in C$ is \emph{forward invariant} if $S(t)u\in A$ for all $u\in A$ and all $t\geq0$.
The usual definition of backward invariance for a dynamical system with an evolution operator defined for all $t\in\R$
is to require that  $S(t)u\in A$ for all $u\in A$ and all $t\leq0$. However, we can not do that for a semi-dynamical system since $S(t)$ is not defined for $t<0$.  Instead we say that $A$ is \emph{backward invariant} if for all $u\in A$ and all $t\geq0$ there exists $v\in A$ such that $S(t)v=u$. The set $A$ is \emph{invariant} if it is both forward and backward invariant.

If $A$ is an invariant set, then there is a complete orbit passing through every point of $A$; so for $u\in A$, we can define $S(t)u\in A$ for all $t\in\R$.
This then allows for the definition of $\alpha$- and $\omega$-limit sets and attractors.
To show that $\omega$-limit sets are compact and non-empty is not as simple as for the finite-dimensional case of ODEs, but can be done using the Arzel\`a-Ascoli Theorem; see Chapter 5 of \cite{Smith11}, where this is done for RFDEs.

Steady states are the simplest invariant sets, but periodic orbits, invariant tori (recall Figure~\ref{fig:tori}), and chaotic attractors are all of interest. Strictly speaking a steady state under the evolution operator $S(t)$ is a function $\phi\in C$ such that $S(t)\phi=\phi$ for all $t\in\R$, which requires the function $\phi$ be a constant function. It is then usual to identify the constant function $\phi\in C$ with the value $u^*\in\R^d$ that it takes. It is quite simple to solve for the steady states, 
since $\udot=0$ at a steady state, and $u_t$ is a constant function, $ u(t)=u(t-\tau)=u^*$. Then the steady states of \eqref{eq:dde} are given by
\be \label{eq:ddess}
0=f(u^*,u^*).
\ee

We will be interested in the stability of steady states and other invariant sets, and the bifurcations that occur as parameters are varied. We will obtain the stability of steady states by linearizing around then. 
In contrast to partial differential equations for which there is no general principle of linearised stability,
DDEs and RFDEs, although defining infinite-dimensional dynamical systems are better behaved, and do have possess a principle of linearised stability \cite{Smith11}. Consequently a steady state will be stable if 
all the characteristic values $\lambda$ have negative real part, and unstable if one or 
more characteristic values $\lambda$ have positive real part.
Floqu\'et theory generalises this technique to periodic orbits. 

On a periodic orbit of \eqref{eq:dde}, or indeed on any complete orbit contained in a compact invariant set, the smoothing property described in Section~\ref{sec:dynsys} 
ensures that the solution is infinitely differentiable when $f$ is infinitely differentiable.
When the delay is constant,
it then follows from the results of \cite{RN73} that the solution is analytic when $f$ is analytic. 
However, \cite{JMPRN14} showed that if discrete delays are allowed to be time or state dependent then solutions may not be analytic for all $t$. \cite{Krisztin14} on the other hand showed that with a threshold delay defined by \eqref{eq:thre_scalarDE} the bounded solutions of \eqref{eq:dde} are analytic when $f$ and $V$ are both analytic.

Bifurcations occur at changes of stability, so we will concentrate on finding stability changes to detect bifurcations. 
Much of the bifurcation theory has been developed for RFDEs \eqref{eq:rfde}, for which $F$ is not even Lipschitz when the
the delay is state-dependent, and this lack of smoothness has greatly impeded the development of the theory.
Theorems for Hopf bifurcations with state-dependent delays have only recently been proven and can be found in \cite{Eichmann06,HuWu10,Sieber12}.
On the other hand the numerical bifurcation and continuation package 
DDE-Biftool \cite{ddebiftool} has had the ability to detect Hopf bifurcations since before those results were derived, and can also find more complicated co-dimension two bifurcations for which there is still not a complete theory because of the lack of smoothness of the centre manifold. We will explore stability and bifurcations doing our best not to get bogged down in the technical details. 

The linearisation of the RFDE
\be \label{eq:rfdeds}
\udot(t)=F(u_t)
\ee
about the steady state $u_t=u^*\in\R^d$ is 
\be \label{eq:Lds}
\udot(t)= L u_t
\ee
where $L$ is the Fr\'echet derivative of $F$ evaluated at the constant function $u_t=u^*\in\R^d$. If you do not know what this is do not worry; for a state-dependent delay, $F$ is not Lipschitz and so not differentiable and this does not work anyway. 
It was only in the 21$^{st}$ century that a rigorous theory 
was developed for the 
linearisation of state-dependent delay problems. 
However, it involves working in a subspace of $C$ on which all elements $\phi$ are $C^1$ and satisfy $\phi'(0)=F(\phi)$, and essentially differentiating there, before  
extending the result to $C$ \cite{Walther03,HKWW06}. In the hope of reaching readers for whom such analysis arguments are opaque, we will show how to linearise heuristically for both discrete state-dependent delays and threshold delays, and also illustrate the pit holes to avoid falling down.

\CCLsubsection{Linearisation for Discrete Delays}
\label{sec:linearize}

Consider \eqref{eq:dde} as a scalar constant-delay DDE with $u(t)\in\R$ and suppose
$u^*$ satifies \eqref{eq:ddess}  
so $u=u^*$ is a steady state. Let $w(t)=u(t)-u^*$ so
\begin{align*}
\wdot(t)&=\udot(t)=f(u(t),u(t-\tau))=f(u^*+w(t),u^*+w(t-\tau))\\
&=f(u^{*\!},u^*)+f_{u\!}(u^{*\!},u^*)w(t)+f_{v\!}(u^{*\!},u^*)w(t-\tau)+\textit{higher order terms}
\end{align*}
using the Taylor series in two variables for $f(u,v)$. Dropping the higher order terms
(arising from higher derivatives of $f$) we obtain the linearised equation
\begin{align} \notag 
\wdot(t) & =f_u(u^{*\!},u^*)w(t)+f_v(u^{*\!},u^*)w(t-\tau)\\
& =\mu w(t)+\sigma w(t-\tau), \label{eq:ddel1} 
\end{align}
for constants $\mu\in\R$, $\sigma\in\R$.
Positing $u(t)=e^{\lambda t}$ gives a transcendental \emph{characteristic equation}, $\Delta(\lambda)=0$,
where $\Delta$ is the characteristic function
\be \label {eq:charfn1}
\Delta(\lambda)=\lambda-\mu-\sigma e^{-\tau\lambda}.
\ee
Let $\lambda=x+iy$ and take real and imaginary parts of $\Delta(\lambda)=0$ to obtain
\begin{gather}
x-\mu-\sigma e^{-\tau x}\cos(y\tau)=0, \label{eq:charfn1real} \\
y+\sigma e^{-\tau x}\sin(y\tau)=0. \label{eq:charfn1imag}
\end{gather}
There are infinitely many solutions of \eqref{eq:charfn1real},\eqref{eq:charfn1imag} which all lie
on the curve
$$y=\pm\sqrt{\sigma^2e^{-2\tau x}-(x-\mu)^2}.$$
All the characteristic roots satisfy $Re(\lambda)=x<|\mu|+|\sigma|$, and since the roots are isolated there are only finitely many to the right of any vertical line in the complex plane. 
This implies that there are only finitely many characteristic values $\lambda$ with $Re(\lambda)>0$, so 
the linear unstable manifold of the steady state $u^*$ is finite dimensional.
In contrast, the stable manifold is infinite-dimensional, since there are always infinitely many $\lambda$ with $Re(\lambda)<0$. The solution of \eqref{eq:ddel1} is given by
$$w(t)=\sum_{\lambda_j\in\sigma_+}c_j e^{\lambda_j t} + \sum_{\lambda_j\in\sigma_-}c_j e^{\lambda_j t}$$
where $\sigma_-$ denotes the set of characteristic values with $\Re(\lambda)<0$, and 
$\sigma_+$ those with $\Re(\lambda)\geq0$. Since there are infinitely many $\lambda_j\in\sigma_-$ with
$\Re(\lambda)<-M$ typical solutions grow faster than $e^{-Mt}$ backwards in time, for every $M>0$ and so grow faster than any exponential in time. This is why we only defined the evolution operator $S(t)$ for $t\geq0$. Only the special solutions with $c_j=0$ for all $\lambda_j$ with $Re(\lambda_j)<0$ will stay bounded backwards in time; but these are precisely the solutions on the linear centre and stable manifolds of the steady state. 

DDEs posed for $u(t)\in\R^d$ can be linearised similarly.  Consider
\be \label{eq:ddem}
\udot(t)=f(u(t),u(t-\tau_1),\ldots,u(t-\tau_m)),
\ee
where $f(u,v_1,\ldots,v_m):\R^d\times\R^{md}\to\R^d$ satisfies $f(u^*,u^*,\ldots,u^*)=0$,
so $u^*$ is a steady state. Then the linearisation about $u^*$ is
$$\wdot(t)=A_0w(t)+\textstyle\sum_{j=1}^m A_jw(t-\tau_j),$$
where $A_0=f_u$ and $A_j=f_{v_j}$ are $d\times d$ matrices evaluated at the steady-state
(essentially a Jacobian matrix for each delay).
There is a nontrivial solution $u(t)=e^{\lambda t}\underline{v}\in\R^d$ with 
$\Delta(\lambda)\underline{v}=0$ if
\[0=\det(\Delta(\lambda)), \quad\textit{where} \quad \Delta(\lambda)=\lambda I_d-A_0-\textstyle\sum_{j=1}^m A_je^{-\lambda\tau_j}.\]
This characteristic equation again has infinitely many roots, but with 
only finitely many $\lambda$ with $Re(\lambda)>M$ for any $M\in\R$.

\CCLsubsubsection{Linearisation with State-Dependent Delays}
Equation \eqref{eq:ddess} also defines the steady states of
the state-dependent DDE
\be \label{eq:ddesd}
\udot(t)=f(u(t),u(t-\tau(u_t))).
\ee
The original heuristic method to linearise about a steady state $u^*$
is to freeze the delay $\tau(u_t)$ at its steady state value $\tau(u^*)$ and then linearise the resulting constant delay DDE
\be \label{eq:ddefr}
\udot(t)=f(u(t),u(t-\tau(u^*))),
\ee
as described above. For example, 
freezing the delays of the state-dependent delay DDE \eqref{eq:twostatedep} 
at their steady state value $u=0$ results in
\be \label{eq:twostatedepfrozen}
\udot (t) = - \gamma u(t)- \sum_{i=1}^2\kappa_i u(t-a_i). 
\ee
Since \eqref{eq:twostatedepfrozen} is already linear in this case, this completes the linearisation.

An analytic approach using the semiflow in an appropriate Banach space developed in \cite{Walther03,HKWW06}
(see also \cite{cooke1996problem, gyori2007exponential}) leads to the same linearisation 
when the delay $\tau(u_t)$ is a discrete delay, for example when $\tau(u_t))=\tau(u_t(0)))=\tau(u(t))$, 
but shows that freezing the delay leads to an incorrect linearisation for distributed delays, such as threshold delays. Importantly, the work of \cite{Walther03,HKWW06}
establishes the \emph{principle of linearised stability} for DDEs with state-dependent delays.

\CCLsubsubsection{Expansion for State-Dependent Delays}

In order to study bifurcations for differential equations it is necessary to study the nonlinear dynamics on the centre manifold. For the state-dependent DDE \eqref{eq:twostatedep} where the only nonlinearity is generated by the state-dependency of the delays this will necessitate an expansion of the delay terms. However, expanding the delay terms is problematical. For a constant delay $\tau$ many authors have attempted to remove the delay from \eqref{eq:dde} or \eqref{eq:ddel1} using 
\be \label{eq:utauexp}
u(t-\tau)=u(t)-\tau \udot(t) +\frac12 \tau^2 \uddot(t)+\cO(\tau^3),
\ee
and truncating after a suitable number of terms.
This is obviously doomed to failure when $\tau$ is large, as the $\cO(\tau^3)$ terms are already large. Perhaps more surprisingly even when $\tau\ll1$ replacing $u(t-\tau)$ by the first two or three terms on the right-hand side of \eqref{eq:utauexp} can change the stability of the steady state when it changes the order of the differential equation (see Example 6 on page 254 of \cite{Driver77}).
On the other hand, it is shown in Section 4.4 of \cite{Smith11} 
that small delays are harmless in linear DDEs; neglecting them does not change the stability.

\cite{CHK17}  study the state-dependent delay DDE \eqref{eq:twostatedep} 
by expanding the delay term $u(t-\tau(u(t))$ about
$u(t-\tau(u^*))$ instead of $u(t)$, and using the DDE to remove higher derivative terms, to conserve the order of the differential equation.  We summarise the approach here.

For \eqref{eq:twostatedep} there are two state-dependent delays 
\be \label{eq:2ssdels}
\tau_j(u_t)=\tau_j(u_t(0))=\tau(u(t))=a_j+c_j u(t), \quad j=1,2,
\ee
and the steady state of \eqref{eq:twostatedep} is $u^*=0$. Thus at the steady state the delays are given by
$\tau_j(u^*)=\tau_j(0)=a_j$ for $j=1,2$. Now consider
\be \label{eq:2ssexp1}
u(t -a_i - cu(t)) =  u(t-a_i) + \dot{u}(t-a_i)(-cu(t))
+ \frac{1}{2}\ddot{u}(t-a_i)(-cu(t))^2 + h.o.t.,
\ee
where $h.o.t.$ denotes `higher order terms.'
However, this expansion introduces higher derivative terms, and we use the original DDE to remove these.
From \eqref{eq:twostatedep} we have
\begin{align} \notag
\udot(t-a_i)&=-\gamma u(t-a_i)-\sum_{j=1}^2\kappa_j u(t-a_i-a_j-c_j u(t-a_i))\\ \notag
& = -\gamma u(t-a_i)\\ \notag
& \phantom{=} \; -\sum_{j=1}^2 \kappa_j\Big[u(t-a_i-a_j)-c_j u(t-a_i)\udot(t-a_i-a_j)+h.o.t.\Big]\\
& = -\gamma u(t-a_i)-\sum_{j=1}^2 \kappa_ju(t-a_i-a_j)+h.o.t. \label{eq:2ssutai}
\end{align}

To deal with the $\ddot{u}(t-a_i)$ term in \eqref{eq:2ssexp1} differentiate \eqref{eq:twostatedep}
to obtain
$$\ddot{u}(t)=-\gamma \udot(t)-[1-c_j\udot(t)]\sum_{j=1}^2\udot(t-a_j-c_j u(t)).$$
Thus
\begin{align*}
\ddot{u}(t-a_i)&=-\gamma \udot(t-a_i)-[1-c_j\udot(t-a_i)]\sum_{j=1}^2\udot(t-a_i-a_j-c_j u(t-a_i)).
\end{align*}
But $\udot(t-a_i-a_j-c_j u(t-a_i))=\udot(t-a_i-a_j)+h.o.t.$ and both $\udot(t-a_i)$ and 
$\udot(t-a_i-a_j)$ can be evaluated using \eqref{eq:2ssutai}.

Substituting these expression into \eqref{eq:2ssexp1} then substituting this into \eqref{eq:twostatedep}
we obtain
\begin{align} \notag
\udot &(t) = - \gamma u(t)- \textstyle{\sum_{i=1}^2} \kappa_i u(t-a_i) \quad\textrm{(linear)}\\ \notag
&-\textstyle{\sum_{i=1}^2} \kappa_i cu(t)\Bigl[\gamma u(t-a_i)+\textstyle{\sum_{j=1}^2}\kappa_j  u(t - a_i - a_j )\Bigr]
\quad\textrm{(quadratic)}\\ \notag
& \scriptstyle -\sum_{i,j=1}^2\kappa_i\kappa_j c^2u(t)u(t-a_i)\Bigl[\gamma u(t-a_i-a_j)+\sum_{m=1}^2\kappa_m
u(t-a_i-a_j-a_m)\Bigr]\\ \notag
& \scriptstyle -\frac12(cu(t))^2\sum_{i=1}^2\kappa_i\Bigl[\gamma^2u(t-a_i)+2\gamma\sum_{j=1}^2\!\kappa_ju(t-a_i-a_j)
+\sum_{j,m=1}^2\!\!\kappa_j\kappa_mu(t-a_i-a_j-a_m)\!\Bigr]\\
& +\cO(4). \label{eq:2ssconst}
\end{align}
Equation \eqref{eq:2ssconst} defines an expansion in constant delays of \eqref{eq:twostatedep} about the steady state $u=0$. The linear terms in the first line of \eqref{eq:2ssconst} are identical
to the `frozen delay' linearisation \eqref{eq:twostatedepfrozen}. However, this technique allows us to also derive the quadratic and higher order terms in an expansion with only constant delays at the cost of introducing extra delays; to be precise there will be up to $n(n+3)/2$ delays at $n^{th}$ order (so 2 delays in the linear terms, 5 including quadratic terms, 9 including cubic and so on). 

Of course, these expansions are only valid for solutions which are sufficiently differentiable, which is assumed to be the case on the stable and unstable manifolds near to the steady state. \cite{CHK17} present these expansions in more detail using truncated series with integral remainder terms, and used them to compute the normal forms of codimension-two Hopf-Hopf bifurcations. Numerical computations of the full nonlinear system \eqref{eq:twostatedep} are used to validate the results. \cite{Sieber17} provided a partial justification for this approach and also generalised the approach for general discrete state-dependent delay problems, to compute normal forms for a multitude of bifurcations using expansions of the state-dependent delays as constant delays, and this is also now implemented in DDE-Biftool (see \cite{ddebiftool})
for normal form computations of bifurcations for DDEs with state-dependent delays.

\CCLsubsection{Linearisation of Threshold Delays}

Let $u^*$ be a steady-state of the system \eqref{eq:scalarDE},\eqref{eq:thre_scalarDE}. 
Evaluating the threshold condition \eqref{eq:thre_scalarDE},
the delay $\tau$ at steady state is determined by
\begin{equation} \label{eq:tauss} 
	\tau = \tau(u^*) =\frac{a}{V(u^*)}. 
\end{equation}
Since $V(u(t))=V(u(t-\tau)=V(u^*)$ at steady state, the velocity ratio in \eqref{eq:scalarDE} is equal to 1 at any steady state,
which is then defined by the algebraic equation  $h(u^*)=0$ where
\begin{equation} \label{eq:hofu}
	h(u)=\beta e^{-\mu\tau(u)}g(u) - \gamma u,
\end{equation}
with the delay $\tau(u)$ given by \eqref{eq:tauss}. 

Following \cite{gedeon2024dynamics},
suppose that $0\leq g(u)\leq g_{\textit{max}}<\infty$. 
Then $h(0)=\beta e^{-\mu\tau(0)}g(0) \geq 0$ and $h(u)\leq \beta g_{\textit{max}} -\gamma u$,
which implies that $h(u)<0$ for $u>\beta g_{\textit{max}}$. 
Therefore by continuity of $h$ there exists at least one steady state $u^*\in[0,\beta g_{\textit{max}}/\gamma]$.

As for the case of discrete delays, we will present a heuristic expansion to perform the linearisation about steady states for threshold delays. For brevity we will not consider higher order terms, but
just as for discrete delays the expansions could be performed to higher order. 
We will present two elementary approaches here. In the first, we perform a truncated expansion of the solution to obtain the linear dynamics, and in the second we first expand the nonlinear functions and then truncate to a linear system, whose solution supplies the linear dynamics.

\CCLsubsubsection{Linearisation by Perturbation from Linear Dynamics}

We illustrate this technique using the scalar model \eqref{eq:scalarDE},\eqref{eq:thre_scalarDE}. 
We begin by assuming that $\|u(t)-u^*\|\ll1$ and
\begin{equation} \label{eq:u_ptb}
	u(t)-u^*=\cE e^{\lambda t}+\cO(\cE^2).
\end{equation}
Here $\cE$ is a small parameter, so \eqref{eq:u_ptb} represents a small perturbation from linear dynamics.
Then, expanding $V(u)$ around $u^*$ we obtain
\begin{align*}
	V(u(s))& = V(u^*)+(u(s)-u^*)V'(u^*)+\cO(\|u(s)-u^*\|^2)\\
	&  = V(u^*)+\cE V'(u^*)e^{\lambda s}+\cO(\cE^2).
\end{align*}
Substituting this into the threshold condition \eqref{eq:thre_scalarDE} and integrating gives
\begin{align*}
	\tau^*V(u^*)  =a &= \int_{t-\tau(t)}^{t} V(u(s)) ds \\
	&= \int_{t-\tau(t)}^{t} V(u^*)+\cE V'(u^*)e^{\lambda s}+\cO(\cE^2)ds\\
    &= \tau(t)V(u^*)+\cE V'(u^*)\frac{e^{\lambda t}}{\lambda}\Big[1-e^{-\lambda\tau(t)}\Big]+\cO(\cE^2),
\end{align*}
which leads to
\begin{equation} \label{eq:tau_linapp}
	\tau(t)=\tau^*-\cE \frac{V'(u^*)}{V(u^*)}\frac{e^{\lambda t}}{\lambda}\Big[1-e^{-\lambda\tau^*}\Big]+\cO(\cE^2).
\end{equation}
Using \eqref{eq:tau_linapp} we obtain
\begin{gather} \label{eq:expo_linapp}
	e^{-\mu\tau(t)}=e^{-\mu\tau^*}\Big[
	1+\cE\mu\frac{V'(u^*)}{V(u^*)}\frac{e^{\lambda t}}{\lambda}(1-e^{-\lambda\tau^*})
	\Big]+\cO(\cE^2), \\
 \label{eq:velratio_linapp}
	\frac{V(u(t)))}{V(u(t-\tau(t)))}=
	1+\cE\frac{V'(u^*)}{V(u^*)}e^{\lambda t}(1-e^{-\lambda\tau^*})+\cO(\cE^2),
\end{gather}
and similarly 
\begin{align} \notag 
	g(u(t-\tau(t))) &  = g(u^*+\cE e^{\lambda(t-\tau(t))}+\cO(\cE^2))\\
& = g(u^*)+\cE g'(u^*) e^{\lambda(t-\tau^*)} +\cO(\cE^2). \label{eq:g_linapp}
\end{align}
Substituting \eqref{eq:u_ptb}-\eqref{eq:g_linapp} into \eqref{eq:scalarDE} and simplifying using the condition $h(u^*)=0$ yields
\begin{align*}
	\cE\lambda e^{\lambda t} & =\frac{d}{dt}(u^*+\cE e^{\lambda t})
	= \beta e^{-\mu\tau(t)}\frac{V(u(t )))}{V(u(t-\tau(t)))}g(u(t -\tau (t))) - \gamma u (t)\\
	&= \cE \beta e^{-\mu\tau^*}\Big[g(u^*)\frac{V'(u^*)}{V(u^*)}(1-e^{-\lambda\tau^*})e^{\lambda t}(\frac{\mu}{\lambda}+1)+g'(u^*) e^{\lambda(t-\tau^*)} \Big]\\
& \qquad\quad-\gamma\cE e^{\lambda t}+\cO(\cE^2).
\end{align*}
Cancelling $e^{\lambda t}$ and dropping the higher order terms, we obtain that the characteristic equation for \eqref{eq:scalarDE}-\eqref{eq:thre_scalarDE} is $\Delta_{\textit{thres}}(\lambda)=0$ where
\begin{equation} \label{eq:char_scalarDE}
\Delta_{\textit{thres}}(\lambda)=	\lambda+\gamma-\beta e^{-\mu\tau^{*\!}}\Big[g(u^*)\frac{V'^{\!}(u^*)}{V^{\!}(u^*)}(1-e^{-\lambda\tau^*})(1+\frac{\mu}{\lambda})+g'^{\!}(u^*)e^{-\lambda\tau^*}\Big].
\end{equation}
This derivation of \eqref{eq:char_scalarDE} assumes linear dynamics to get the characteristic equation directly, without first linearising the system \eqref{eq:scalarDE},\eqref{eq:thre_scalarDE}.

\CCLsubsubsection{Linearisation by Expansion}

An alternative derivation of \eqref{eq:char_scalarDE}, is to first linearise the system \eqref{eq:scalarDE},\eqref{eq:thre_scalarDE}, and then determine the dynamics of the linearised system.
Begin by defining new variables $\tilde{u}(t)$ and $\tilde{\tau}(t)$ by 
\begin{equation} \label{eq:tildeu_tildetau}
	\tilde{u}(t) = u(t)-u^*, \quad \tilde{\tau}(t)=\tau(t)-\tau^*. 
\end{equation}
Consider $(u(t), \tau(t))$ close to the steady state $(u^*, \tau^*)$ so that $\|\tilde{u}\|, \|\tilde{\tau}\|\ll1$. 

The behavior of $\tilde{\tau}(t)$ is found by linearizing the threshold condition \eqref{eq:thre_scalarDE}
\begin{align*}
	a &= \int_{-\tau(t)}^{0} \hspace*{-1em}V(u(t+s))ds 
                 = \int_{-\tau^*-\tilde{\tau}(t)}^{0} \hspace*{-2em}V(u^{*\!}+\tilde{u}(t+s)) ds \\
	&=\int_{-\tau^*-\tilde{\tau}(t)}^{0} \hspace*{-2em} V(u^*)+V'(u^*)\tilde{u}(t+s) + \cO(\|\tilde{u}(t+s)\|^2) ds \\
	&= V(u^*)(\tau^{*\!}+\tilde{\tau}(t))+V'(u^*)\!\int_{-\tau^*-\tilde{\tau}(t)}^{0}\hspace*{-1em}\tilde{u}(t+s) ds + \cO(\|\tilde{u}_t\|^2).
\end{align*}
But $a=V(u^*)\tau^*$ so
\begin{align*}	
0
&= V^{\!}(u^*)\tilde{\tau}(t) + V'^{\!}(u^*)\!\!\int_{-\tau^*}^{0}\hspace*{-1em}\tilde{u}(t+s) ds + \cO(\|\tilde{u}_t\|\|\tilde{\tau}(t)\|)+ \cO(\|\tilde{u}_t\|^2).
\end{align*}
Thus 
\begin{equation} \label{eq:tautilde}
	\tilde{\tau}(t) = -\frac{V'(u^*)}{V(u^*)}\int_{-\tau^*}^{0}\tilde{u}(t+s) ds + h.o.t.
\end{equation} 

Expanding \eqref{eq:thre_scalarDE} using \eqref{eq:tildeu_tildetau} we obtain 
\begin{align} \label{eq:dtildeudt}
	\frac{d\tilde{u}(t)}{dt} &= \beta e^{-\mu\tau^*}\Big[g(u^*)\frac{V'(u^*)}{V(u^*)}(\tilde{u}(t)-\tilde{u}(t-\tau^*))\nonumber\\
	&\quad\quad -g(u^*)\mu\tilde{\tau}(t)+g'(u^*)\tilde{u}(t-\tau^*)\Big]-\gamma\tilde{u}(t)+h.o.t.
\end{align}

Now combining \eqref{eq:dtildeudt} and \eqref{eq:tautilde} and neglecting the higher order terms, we get the linearised system
\begin{align}
	\frac{d\tilde{u}(t)}{dt} & = \beta e^{-\mu\tau^*}\Big[g(u^*)\frac{V'(u^*)}{V(u^*)}(\tilde{u}(t)-\tilde{u}(t-\tau^*)) \nonumber\\
	& \quad\quad -g(u^*)\mu\tilde{\tau}(t)+g'(u^*)\tilde{u}(t-\tau^*)\Big]-\gamma\tilde{u}(t), \label{eq:dtildeudt_lin} \\
	\tilde{\tau}(t) &= -\frac{V'(u^*)}{V(u^*)}\int_{-\tau^*}^{0}\tilde{u}(t+s) ds.  \label{eq:tautilde_lin}
\end{align}

To solve the linear system  \eqref{eq:dtildeudt_lin}-\eqref{eq:tautilde_lin} make the ansatz that
the solution $(\tilde{u}(t), \tilde{\tau}(t))$ can be expressed as
\begin{equation} \label{eq:tildeu_tildetau_exp}
	\begin{pmatrix}
		\tilde{u}(t) \\ \tilde{\tau}(t)
	\end{pmatrix} = e^{\lambda t} \begin{pmatrix}
		\epsilon_1 \\ \epsilon_2
\end{pmatrix}. 
\end{equation}
Substituting \eqref{eq:tildeu_tildetau_exp} into \eqref{eq:tautilde_lin} yields
\begin{equation} \label{eq:eps2}
	\epsilon_2 = -\frac{V'(u^*)}{V(u^*)}\frac{1}{\lambda}(1-e^{-\lambda\tau^*})\epsilon_1.
\end{equation}
Then substituting \eqref{eq:tildeu_tildetau_exp} into \eqref{eq:dtildeudt_lin} and applying \eqref{eq:eps2}, we get the characteristic equation is again
$\Delta_{\textit{thres}}(\lambda)=0$ where $\Delta_{\textit{thres}}(\lambda)$ is defined by
\eqref{eq:char_scalarDE}.
The same characteristic equation is also derived using the Banach space approach in \cite{gedeon2024dynamics}.

We remark that this characteristic equation cannot be derived by freezing the delay. Freezing the delay essentially treats \eqref{eq:scalarDE} as a constant discrete delay DDE, which would have an exponential polynomial as its characteristic function, but $\Delta_{\textit{thres}}$ is not an exponential polynomial because of the $\mu/\lambda$ term (which arises from the integration over the time interval in the derivation of \eqref{eq:expo_linapp}).

\CCLsubsubsection{Linearisation with differentiation of the threshold condition}

In Section~\ref{subsec:scalthresdde}
we proposed to replace the threshold condition \eqref{eq:thre_scalarDE} by its differentiated form 
\eqref{eq:taudot} to convert the threshold delay system \eqref{eq:scalarDE},\eqref{eq:thre_scalarDE}
into the discrete delay system \eqref{eq:thre_scalarDE},\eqref{eq:taudot}. 
To linearise \eqref{eq:taudot} around a steady state, substitute \eqref{eq:tildeu_tildetau} into \eqref{eq:taudot} to obtain
\begin{equation} \label{eq:dtildetaudt}
	\frac{d\tilde{\tau}(t)}{dt} = -\frac{V'(u^*)}{V(u^*)}(\tilde{u}(t)-\tilde{u}(t-\tau^*))+h.o.t. 
\end{equation}
The linearisation
of \eqref{eq:thre_scalarDE},\eqref{eq:taudot} is then defined by
the linear system \eqref{eq:dtildeudt},\eqref{eq:dtildetaudt}. Making the solution ansatz
\eqref{eq:tildeu_tildetau_exp}, we obtain
\begin{equation} \label{eq:diffthreschareq}
	\begin{pmatrix}
		A_{11} & -\beta e^{-\mu\tau^*}g(u^*)\mu \\
		\frac{V'^{\!}(u^*)}{V(u^*)}(1-e^{-\lambda\tau^*}) & \lambda
	\end{pmatrix} \! \begin{pmatrix}
	\epsilon_1 \\ \epsilon_2
	\end{pmatrix}=\begin{pmatrix}
		0 \\ 0
	\end{pmatrix}
\end{equation}
where 
$$A_{11}=
\lambda+\gamma-\beta e^{-\mu\tau^*\!}\Big(g(u^*)\frac{V'^{\!}(u^*)}{V(u^*)}(1-e^{-\lambda\tau^*})+g'^{\!}(u^*)e^{-\lambda\tau^*\!}\Big).$$
It follows that the characteristic equation to \eqref{eq:dtildeudt},\eqref{eq:dtildetaudt} is given by $\Delta(\lambda)=0$, where $\Delta(\lambda)=\lambda\Delta_{\textit{thres}}(\lambda)$, where
$\Delta_{\textit{thres}}(\lambda)$, defined in \eqref{eq:char_scalarDE}, is the characteristic function of the undifferentiated threshold problem
\eqref{eq:scalarDE},\eqref{eq:thre_scalarDE}. 

Thus we see that differentiating the threshold condition preserves all of the characteristic values of the original system, but introduces one extra spurious characteristic value $\lambda=0$. That this arises directly from the differentiation can be seen by multiplying out the second line of \eqref{eq:diffthreschareq}, which gives precisely \eqref{eq:eps2} multiplied by $\lambda$.

\CCLsection{Numerical Techniques for DDEs}

\CCLsubsection{IVPs with Discrete State-Dependent Delays \label{sec:bps}}

There are two main issues for numerically solving the IVP 
\begin{align} \label{eq:num:dde}
\udot(t)&=f(t,u(t),u(t-\tau(t,u(t)))), \quad  t\geq t_0,\\
u_{t_0} &= \phi \in C. \label{eq:num:phi}
\end{align}
These are the need to compute $u(t-\tau)$ between mesh points during the numerical computation, and also maintaining the global approximation order of the method while coping with the loss of smoothness of the solution at breaking points. One-step Runge-Kutta (RK) methods are preferred, as multistep methods are derived from interpolating over several contiguous steps and will suffer an order reduction due to the breaking points. 

When all of the delays are discrete and bounded away from zero the method of steps (introduced above equation~\eqref{eq:Winstonu}) can be used to solve the DDE as a sequence of nonautonomous ODE problems. 
To evaluate delayed terms $u(t-\tau)$ where $t-\tau\leq t_n$ while computing the solution on the time-step 
$t\in[t_n,t_{n+1}]$, so called Continuous Runge-Kutta methods (CRKs) are used.  Originally formulated for ODEs, these methods define a piecewise smooth interpolant function, $\uh(t)$, of the same global order as the ODE method, 
so that the numerical solution is defined for all $t\in[t_0,t_n]$. There is an excellent description of CRK methods for constant delay DDEs in \cite{BZ03}. 
CRK methods generally require that the step-size $h_n=t_{n+1}-t_n$ satisfies $h_n<\tau$, otherwise \emph{overlapping} occurs
where a value of $u(t-\tau)$ is required from the current as yet incomplete step. In this situation CRK methods become implicit (see \cite{BZ03}). 

\CCLsubsubsection{Explicit Functional Continuous Runge-Kutta (FCRK) Methods}

These
were introduced by \cite{Tavernini71}, while general FCRK convergence theory has been developed more recently (see \cite{MTV05,BZMGActa09}). 
FCRKs solve the overlapping problem by defining an interpolant for each stage of the method
based only on the already computed stages, so FCRKs remain explicit, even in the case of overlapping. 
Suppose
$n$ steps of the solution have already been computed, then at the next step we need to solve
\eqref{eq:num:dde} for $t\in[t_n,t_{n+1}]$ with
 \be \label{eq:mainnthint} 
  u(t) = \left\{
  \begin{aligned}
    &\phi(t), &&\textrm{if}\;\;t\leq t_0,\\
    & \uh(t),&&\textrm{if}\;\;t\in[t_0,t_n].
  \end{aligned} \right.
\ee

The general $s$-stage explicit FCRK method for obtaining the approximation $\uh(t)$ to $u(t)$ for $t\in(t_n,t_{n+1}]$ is defined by
\begin{equation}\left.
  \begin{gathered}
    \uh(t_{n}+\theta h_n) = \uh(t_n) + h_n\sum_{i=1}^sb_i(\theta)k_{n,i},\quad \theta=\frac{t-t_n}{h_n}\in(0,1],\\
      U_{n,i} = \uh(t_n) + h_n\sum_{j=1}^{i-1}a_{ij}k_{n,j}, \\
    k_{n,i} = f\big(t_n + c_ih_n, U_{n,i}, \eta_{n,i}(t-\tau(t_n + c_ih_n,  U_{n,i}))\big).
  \end{gathered}\right\} \label{eq:fcrk}
\end{equation}
The stage interpolants $\eta_{n,i}(t)$ are defined by \eqref{eq:mainnthint} when $t\leq t_n$,
and when $t_n<t$ (overlapping) by
\begin{equation} \label{eq:fcrketa}
  \eta_{n,i}(t) = \uh(t_n) + h_n\sum_{j=1}^{i-1} a_{ij}(\theta)k_{n,j}.
\end{equation}
The interpolant $\uh(t_n+\theta h_n)$ in \eqref{eq:fcrk} allows us to extend $\uh$ in \eqref{eq:mainnthint} 
to the interval $t\in[t_0,t_{n+1}]$, while equation~\eqref{eq:fcrketa} defines explicit interpolants
which are only used in the computation of the $i$-th stage value $k_{n,i}$ if 
there is overlapping (if $t-\tau(t_n + c_ih_n,  U_{n,i})>t_n$).

We will only consider methods with $a_{ij}=a_{ij}(c_i)$, and then the coefficients of particular methods can be represented by the Butcher tableau
\begin{equation} \label{tbl:fcrk}
\begin{array}{c|ccccc}
     0     \\
     c_2    & a_{21}(\theta)  \\
     c_3    & a_{31}(\theta) & a_{32}(\theta)  \\
     \vdots & \vdots & \vdots & \ddots   \\
     c_s    & a_{s1}(\theta) & a_{s2}(\theta) & \cdots & a_{s-1,s}(\theta)  \\[0.3mm] \hline \\[-4mm]
              & b_1(\theta) & b_2(\theta) & b_3(\theta) & \cdots & b_s(\theta).
\end{array}
\end{equation}
We will use four explicit methods. FCRK1 and FCRK2 are $s$-stage methods of global (uniform) order $s$
proposed by \cite{Tavernini71} defined by 
\begin{equation} \label{tbl.fcrk12}
  \begin{array}{c|c}
    0        &    \\
     \hline & \\[-3.5mm]
      & \theta
  \end{array}
\qquad
  \begin{array}{c|cc}
    0        & &   \\
    1        & \theta & \\
     \hline && \\[-3.5mm]
      &  \theta-\frac12\theta^2 & \frac12\theta^2 \\[1mm]
  \end{array}
\end{equation}
FCRK3 and FCRK4 are methods of global order $3$ and $4$ with respectively $4$ and $7$ stages,
which were first derived in
%
\cite{MTV05}, where their coefficients are stated. See \cite{BZMGActa09} for a very detailed discussion of how the global order is derived from the properties of the interpolants and the underlying RK method.
See \cite{Alexey19} for explicit FCRK methods of global order 3 and 4 with 3 and 6 new
stages per step respectively. 

\CCLsubsubsection{Breaking Points}

We introduced breaking points in Section~\ref{sec:ddes} and discussed them further in
Section~\ref{sec:dynsys}. Now we need to distinguish their severity.

\begin{definition}[\cite{BZ03}]   \label{def:bkpt}
A \textit{breaking point} is said to be of order $k$ if all derivatives of the solution up to the $k$-th exist and the $k$-th is Lipschitz continuous.
\end{definition}

With this definition, $t_0$ is generically a breaking point of order $0$.
If the initial function $\phi$ has jump discontinuities (or if the initial condition is set such that $u(t_0)\ne\phi(0)$) they are breaking points of order $-1$. These breaking points are not an issue 
as they are not in the computational interval for $t\geq t_0$. However, 
if $t=\xi_i$ is a breaking point then from 
\eqref{eq:num:dde}, additional breaking points are created at any $t=\xi_j>\xi_i$ such that
\be \label{eq:xij}
\xi_j-\tau(\xi_j,u(\xi_j))=\xi_i.
\ee
For simplicity we assume that $f$ is smooth (otherwise $f$ itself may generate additional breaking points),
then as already illustrated in \eqref{eq:ds:uddbkpt}, the smoothing property of the DDE
means that a breaking point $\xi_j$ defined by \eqref{eq:xij} will have order (at least) one more than the order of $\xi_i$.

Reduced differentiability at breaking points can pollute the local and global error of numerical solvers.
Since order $p$ IVP solvers achieve their local error $\cO(h^{p+1})$ by approximating the Taylor series of the solution over each time-step, breaking points of order $p$ and higher, will not affect the global order of the numerical computation. From the smoothing property, if there is a maximum delay $\tau(t,u(t))\leq \tau_{\textit{max}}$ then for a method of order $p$ we will only be concerned 
with breaking points of order $k<p$ which will occur for $t\in(t_0,t_0+p\tau_{\textit{max}})$, and so only finitely many breaking points need to be tracked and approximated. As for how accurately they need to be approximated: 

\begin{theorem} \label{thm:approx}
Let $\xi(h)$ be the nearest mesh point $t_n$ to a breaking point $\xi_j$ of order $k$. Then a necessary condition for an FCRK method to have global order $p>k$ is that
\be \label{eq:hpk+1}
\xi(h)-\xi_j = \cO(h^{p/{(k+1)}}).
\ee
\end{theorem}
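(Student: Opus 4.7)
The plan is to localize the error to a short sub-interval of length $\delta=|\xi(h)-\xi_j|$ on which the FCRK scheme, having registered $\xi(h)$ as the breaking point, integrates the ``wrong side'' of the solution. A Taylor expansion around $\xi_j$ quantifies that mis-integration as an unavoidable error of size $\delta^{k+1}$, and discrete-Gronwall stability then forces $\delta^{k+1}=\cO(h^p)$ if the global order is to remain $p$.

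First I would record the local structure of the solution around the breaking point. By the smoothing property recalled in Section~\ref{sec:dynsys}, at $\xi_j$ the solution $u$ is $C^k$ with $u^{(k)}$ Lipschitz and $u^{(k+1)}$ has a generically nonzero jump $J$. All Taylor coefficients of $\dot u^\pm$ at $\xi_j$ agree up to order $k-1$, with the first discrepancy appearing at order $k$, so
\[
\dot u^+(t)-\dot u^-(t)=\frac{J}{k!}(t-\xi_j)^k+\cO(|t-\xi_j|^{k+1}).
\]

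Next I would analyse what the FCRK actually computes on the step that misses $\xi_j$ by $\delta$. Once the scheme treats $\xi(h)$ as the breaking point, stages to its right use the right-sided dynamics $\dot u^+$ extended through $\xi_j$, whereas the true solution follows $\dot u^-$ on the short segment between $\xi(h)$ and $\xi_j$. The FCRK quadrature of the smooth continuation $\dot u^+$ contributes only $\cO(h^{p+1})$ by the order conditions satisfied by the weights $b_i(\theta)$ in \eqref{eq:fcrk}, so the local error on this step reduces to the mis-integration
\[
L_n=\int_{\xi(h)}^{\xi_j}\!\bigl(\dot u^-(t)-\dot u^+(t)\bigr)\,dt+\cO(h^{p+1})=\frac{(-1)^{k+1}J}{(k+1)!}\,\delta^{k+1}+\cO(\delta^{k+2})+\cO(h^{p+1}).
\]

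Finally I would invoke the discrete-Gronwall stability estimate for FCRKs, $\|\uh(T)-u(T)\|\le K\sum_n\|L_n\|$ with $K$ depending on the Lipschitz constant of $f$ and the length of the computational interval (see \cite{BZ03,BZMGActa09}). Over $\cO(h^{-1})$ regular steps the smooth local errors accumulate to $\cO(h^p)$, while the isolated breaking-point contribution $cJ\delta^{k+1}$, with $c=(-1)^{k+1}/(k+1)!\ne 0$, enters the global error unamplified. For the global order to be $p$ we therefore need $\delta^{k+1}=\cO(h^p)$, that is, $\xi(h)-\xi_j=\cO(h^{p/(k+1)})$. The main obstacle in tightening this sketch into a full proof is confirming that $J\ne 0$ -- which is exactly the condition that $\xi_j$ is a breaking point of order \emph{exactly} $k$ rather than of some higher order -- and checking that the one-sided Taylor argument above, rather than any cancellation among the FCRK weights, controls the leading local error; the apparent subtleties of the FCRK interpolants $\eta_{n,i}$ and the one-sided derivative convention at $\xi(h)$ drop out because they affect only the $\cO(h^{p+1})$ smooth remainder.
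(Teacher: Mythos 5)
Your proposal is correct and rests on the same key observation as the paper's proof: the two one-sided Taylor expansions $u_-$ and $u_+$ of the solution about a breaking point of order $k$ first differ at order $(t-\xi_j)^{k+1}$, so integrating the wrong branch over the interval of length $\delta=|\xi(h)-\xi_j|$ produces an unavoidable defect of size $\delta^{k+1}$, which must be $\cO(h^p)$. The paper phrases this directly as the smooth one-step interpolant having to approximate both $u_-$ and $u_+$ to $\cO(h^p)$ on the offending step (so no Gronwall accumulation step is needed for the necessary condition), but your explicit jump coefficient $J$ and local-defect computation are just a more detailed rendering of the same argument.
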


\begin{proof}
In the absence of a breaking point the FCRK method (and indeed any one-step method) approximates the Taylor series of the smooth solution for $t\in[t_n,t_{n+1}]$ to obtain its local accuracy.
When there is a breaking point of order $k$ at $\xi_j\in(t_n,t_{n+1})$ the Taylor series of the exact solution
$u_-(t)=\sum_{j=0}^\infty\frac{1}{j!}f^{(n)}(\xi_j)(t-\xi_j)^n$ for $t\leq \xi_j$ will differ from
the Taylor series of the exact solution $u_+(t)$ for $t\geq\xi_j$ in the terms of order $(t-\xi_j)^{k+1}$ and above. For the smooth numerical solution defined by the FCRK method on $t\in[t_n,t_{n+1}]$ to have global order $p$, it must approximate both $u_-(t)$ and $u_+(t)$ to order $\cO(h^p)$ accuracy, which is only possible if 
$\sup_{t\in[t_n,t_{n+1}]}\|u_-(t)-u_+(t)\|=\cO(h^p)$ which leads directly to \eqref{eq:hpk+1}.
\end{proof}

\begin{table}[t!]
\begin{center}
\begin{tabular}{|c|c|c|c|c|c|c|}\hline
Breaking point & \multicolumn{6}{c|}{Order of method $p$} \\
Order $k$ & 1 & 2 & 3 & 4 & 5 & 6 \\ \hline
0 &  (1) & 2 & 3 & 4 & 5 & 6 \\
1 & - & (1) & 1.5 & 2 & 2.5 & 3 \\
2 & - & - & (1) & 4/3 & 5/3 & 2 \\
3 & - & - & - & (1) & 1.25 & 1.5 \\
4 & - & - & - & - & (1) & 1.2 \\ \hline
\end{tabular}
\caption{The smallest value of $r$ to achieve global order $p$ when breaking points of order $k$ are approximated with accuracy $\cO(h^r)$ in \eqref{eq:hpk+1}.}  \label{tab:Ohr}  
\end{center} 
\end{table} 

Table~\ref{tab:Ohr} states the lowest accuracy $\cO(h^r)$ to which breaking points of order $k$ need to be approximated to achieve global order $p$. We already saw that points of order$\geq p$ are immaterial, and now we see that breaking points of order $p-1$ do not need to be approximated either. More precisely, they need to be approximated to accuracy $\cO(h)$, which the mesh already does.     
Order $0$ breaking points need to be approximated very accurately; to $\cO(h^p)$ to obtain order $p$. But higher order breaking points do not need to be found quite so accurately; for example finding the order 1 and 2 breaking points to just $\cO(h^2)$ accuracy is necessary for fourth order convergence. 

We will present an explicit (non-iterative) method for detecting breaking points to the accuracy required by 
\eqref{eq:hpk+1} to achieve global order $p$ for FCRK methods. Lacking space here for a full convergence proof, we will use two test problems with known exact solutions to illustrate the performance of the different FCRK methods with and without breaking point detection.

\begin{example}[\cite{FN84}] \label{ex:test1}
Consider the DDE IVP
\begin{equation} \label{eq:test1}
\udot(t) = \frac{u(u(t) - \sqrt{2}+1)}{2\sqrt{t}}, \; t\in[1,5], \qquad u(t) = 1,\; t \leq 1,
\end{equation}
which has solution
\begin{equation} \label{eq:test1sol}
u(t) = \left\{
  \begin{aligned}
    &\sqrt{t},  && t\in[1,2], \\
    &\frac{t}4 + \frac12 +\left(1 - \frac1{\sqrt{2}}\right)\sqrt{t}, && t\in[2,5.0294372515248].
  \end{aligned}
  \right.
\end{equation}
The solution $u(t)$ of \eqref{eq:test1} does not link smoothly with the history function at the initial point $t_0 = 1$, thus there is a breaking point of order $0$ at $\xi_1=1$. From \eqref{eq:xij} this leads
to a breaking point $\xi_2=2$ of order $1$ (where $\alpha(\xi_2,u(\xi_2))=u(\xi_2)-\sqrt{2}+1=\xi_1$). The next breaking point $\xi_3>5$ has order $2$. We will solve this IVP on the interval $t\in[t_0,t_{\!f}]=[1,5]$ which contains the single breaking point $\xi_2=2$ of order $1$.
\end{example}

\begin{example}  \label{ex:test2}
Consider the DDE
\begin{equation}  \label{eq:test2}
\udot(t) = -u(t) - u(t - 1 - u(t)), \qquad
u(t)  = \phi(t)= \left\{
    \begin{array}{cl}
      0,\; & t < -1,\\
      1, & t\in[-1,0],
    \end{array} \right.
\end{equation}
which has the solution
\begin{equation} \label{eq:test2sol}
u(t) =
\left\{
  \begin{aligned}
    &\phantom{-}\rme^{-t},  && t\in[0,\W(1)], \\
    &-1 + \rme^{-t}\bigl(1+\rme^{\W(1)}\bigr), && t\in[\W(1),\W(1+\rme^{\W(1)})].
  \end{aligned}
  \right.
\end{equation}
Here $\W(t)$ is Lambert $W$-function, which is defined as a solution of $\W(t)\rme^{\W(t)}=t$
(see \cite{Corless1996} for a detailed treatment).
The DDE \eqref{eq:test2} is a special case of \eqref{eq:twostatedep}.
We consider \eqref{eq:test2} as an IVP with
a discontinuity in the history function at $\xi_1 = -1$, which defines a breaking point of order $-1$.

As in the previous example, the solution is not smoothly linked with the history function at $t_0 = 0$,
so there is a second breaking point $\xi_2=0$, which is of order $0$. The discontinuity in $\phi$ gives rise to 
a second 0-order breaking point
$\xi_3 = \W(1)\approx 0.567143$. We solve the IVP for $t\in[t_0,t_{\!f}]=[0,1]$, for which $\xi_3$ is the only breaking point
in the computation interval.
\end{example}

Example~\ref{ex:test2} falls outside of the existence and uniqueness theory of Theorems~\ref{thm:Driverexist}-\ref{thm:Driveruniq} because $\phi$ is not continuous. Problems with piecewise smooth initial functions $\phi$ arise often in applications. For example in hematopoietic models, a blood donation, or an intravenous bolus drug administration, can be modelled as an instantaneous change in the relevant variables. Equation~\eqref{eq:test2} nevertheless has the unique solution \eqref{eq:test2sol}, despite the discontinuity in $\phi$. There has been a lot of study of
piecewise smooth systems in recent years, but mainly focused on ODEs. Clearly, the only issue will be when 
$t=\xi_j$ solves \eqref{eq:xij}. In that case, provided
\be \label{eq:nosliding}
\frac{d}{dt}\big(t-\tau(t,u^\pm(t))\big)\Big|_{t=\xi_j}>0
\ee
for both the solutions $u^-(t)$ for $t\leq \xi_j$ and $u^+(t)$ for $t\geq \xi_j$ the solution can be uniquely continued through this point. The solution \eqref{eq:test2sol} is easily verified to satisfy this condition.
We are not aware of a theoretical result establishing \eqref{eq:nosliding} rigorously, but elements of the discussion can be found in \cite{BG09}.

\begin{figure}[tp!]
\centering
\scalebox{0.9}{\includegraphics{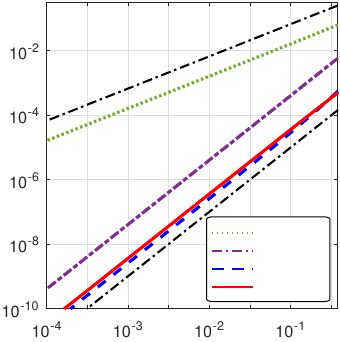}
\put(-38,50){\scriptsize FCRK1}
\put(-38,41){\scriptsize FCRK2}
\put(-38,32){\scriptsize FCRK3}
\put(-38,24){\scriptsize FCRK4}
\put(-135,114){\rotatebox{20}{\scriptsize Slope 1 line}}
\put(-110,18){\rotatebox{40}{\scriptsize Slope 2 line}}
\put(-14,3){\scriptsize$h$}
\put(-160,153){\scriptsize$Err$}
\put(-138,155){\scriptsize (a)}}\hspace*{2em}\scalebox{0.9}{\includegraphics{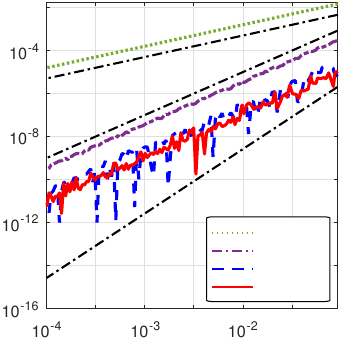}
\put(-38,50){\scriptsize FCRK1}
\put(-38,41){\scriptsize FCRK2}
\put(-38,32){\scriptsize FCRK3}
\put(-38,24){\scriptsize FCRK4}
\put(-135,121){\rotatebox{11}{\scriptsize Slope 1 line}}
\put(-135,95){\rotatebox{22}{\scriptsize Slope 2 line}}
\put(-125,34){\rotatebox{33}{\scriptsize Slope 3 line}}
\put(-14,3){\scriptsize$h$}
\put(-160,148){\scriptsize$Err$}
\put(-138,155){\scriptsize (b)}}

\scalebox{0.9}{\includegraphics{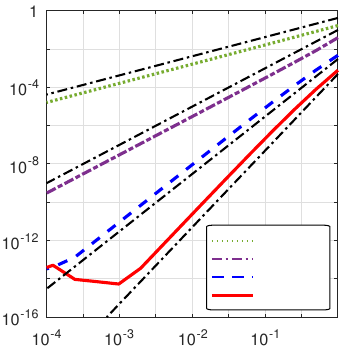}
\put(-38,50){\scriptsize FCRK1}
\put(-38,41){\scriptsize FCRK2}
\put(-38,32){\scriptsize FCRK3}
\put(-38,24){\scriptsize FCRK4}
\put(-135,129){\rotatebox{12}{\scriptsize Slope 1 line}}
\put(-125,35){\rotatebox{38}{\scriptsize Slope 3 line}}
\put(-135,90){\rotatebox{25}{\scriptsize Slope 2 line}}
\put(-102,20){\rotatebox{47}{\scriptsize Slope 4 line}}
\put(-14,3){\scriptsize $h$}
\put(-160,148){\scriptsize $Err$}
\put(-138,155){\scriptsize (c)}}\hspace*{2em}\scalebox{0.9}{\includegraphics{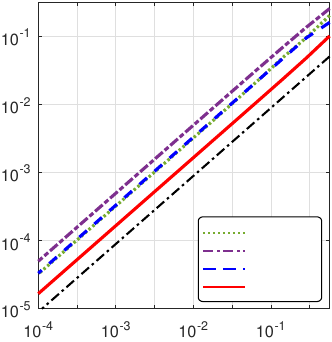}
\put(-38,50){\scriptsize FCRK1}
\put(-38,41){\scriptsize FCRK2}
\put(-38,32){\scriptsize FCRK3}
\put(-38,24){\scriptsize FCRK4}
\put(-120,25){\rotatebox{42}{\scriptsize Slope 1 line}}
\put(-14,3){\scriptsize $h$}
\put(-160,155){\scriptsize $Err$}
\put(-138,155){\scriptsize (d)}}
\caption{The error
$Err$ \eqref{eq:Err} for the four methods FCRK1-4 without breaking point detection and step-sizes chosen using \eqref{eq:hlambda}. 
For Problem \eqref{eq:test1} on $t\in[1,5]$ with (a) $\lambda=1/2$, (b) 
$\lambda$ random  and uniformly distributed, (c) $\lambda=0$. (d) For problem \eqref{eq:test2} on $t\in[0,1]$
with $\lambda=0$.
Reference lines of different slopes are included to indicate the global order of the methods.} \label{fig:testnone}
\end{figure}

Figure~\ref{fig:testnone} illustrates the behaviour of the four methods FCRK1-4 applied to the 
test problems without any breaking point detection. 
To ensure the first breaking point $\xi>t_0$ is not accidentally 
or randomly included in the computational mesh we integrate with constant step-sizes $h$ defined by
\be \label{eq:hlambda}
\xi-t_0=(N+\lambda)h
\ee
for integer $N$ and constant $\lambda\in[0,1)$ so that the breaking point always falls at the same fraction $\lambda$ in the $(N+1)$-st step, for any choice of the integer $N$. The error between
the numerical and exact solutions as functions is measured using the $L^\infty$-norm
\be \label{eq:Err}
Err=\sup_{t\in[t_0,t_{\!f}]}\|u(t)-\uh(t)\|.
\ee

For the test problem \eqref{eq:test1}, in Figure~\ref{fig:testnone}(a) step-sizes are chosen so that $\lambda=1/2$ in 
\eqref{eq:hlambda} and the breaking point always falls exactly in the middle of a step.
The figure shows that when an order $1$ breaking point is not approximated in the computational mesh, all higher order FCRK methods are reduced to second order. 
Noting that with step-size $h$ all breaking points are approximated to at least $\cO(h)$ accuracy,
this behaviour is fully consistent with Theorem~\ref{thm:approx}.

Figure~\ref{fig:testnone}(c) shows that when the order $1$ breaking point is included in the mesh for 
the test problem \eqref{eq:test1}, the FCRK$p$ method recovers it full order $p$. A more realistic scenario,
without breaking point detection when the exact solution is not known, is that the breaking point will occur  
at a random point in the $[t_n,t_{n+1}]$. This is simulated in Figure~\ref{fig:testnone}(b) by choosing 
uniformly distributed $\lambda$ in \eqref{eq:hlambda}, which again shows that higher order methods are 
reduced to second order. 
 
Figure~\ref{fig:testnone}(d) shows the behaviour of the FCRK methods applied to the test problem \eqref{eq:test2} with an order $0$ breaking point. When the breaking point is not included in the mesh, all the FCRK methods are reduced to first order, as expected from Theorem~\ref{thm:approx}. However, Figure~\ref{fig:testnone}(d) actually shows the case where the exact breaking points are included in the mesh ($\lambda=0$),
where we see that the methods still all behave as order $1$.




Several issues are at play in the example of Figure~\ref{fig:testnone}(d).
Firstly, because breaking point locations depend on the solution, the breaking points $\xi_j^h$ of the numerical solution $\uh(t)$ and the breaking points $\xi_j$ of the exact solution $u(t)$ will not correspond exactly, and it is  $\xi_j^h$, and not $\xi_j$ that should be included in the computational mesh. 
Secondly even if $\xi_j^h-\xi_j=\cO(h^p)$, for an order $0$ breaking point $\xi^h_j$,
there will be an $\cO(1)$ difference between $f(t,\uh(t),t-\tau(t,\uh(t)))$ for $t$ values close to $\xi^h_j$ such that $t-\tau(t,\uh(t))$ falls either side of the previous order $-1$ breaking point. In this case an arbitrary small error in determining the breaking point can generate a $\cO(1)$ error in an evaluation of $f$.
Thus, it is not sufficient just to detect breaking points to accuracy \eqref{eq:hpk+1}, but how these breaking points are handled in the evaluation of the numerical solution will be crucial for maintaining the order of the method. In particular, it will be necessary to ensure that function values are never evaluated from the wrong side of a previous breaking point.

While several authors have tackled breaking points for state-dependent delays, none have 
supplied a comprehensive algorithm for explicit numerical solvers that both preserves the 
global order of the method, and keeps the method explicit. 
\cite{BZ03} while stating many theoretical results for CRKs 
simply assume that all breaking points are known exactly.

\cite{FN84} present a detailed study of breaking points, but do not present an algorithm for finding them.
They suggest to detect if a new breaking point $\xi_j^h$ appears after making a step by checking whether $t-\tau(t,\uh(t))$ crosses a previously found breaking point $\xi_i^h$.
If that is the case, then the interpolant $\uh$ of the solution determines where the new breaking point is.
However, as illustrated in Figure~\ref{fig:bp3} this provides a poor approximation, and risks leaving us in the same scenario as in Figure~\ref{fig:testnone}(d).

\begin{figure}[t!]
\centering
\includegraphics[scale=0.565]{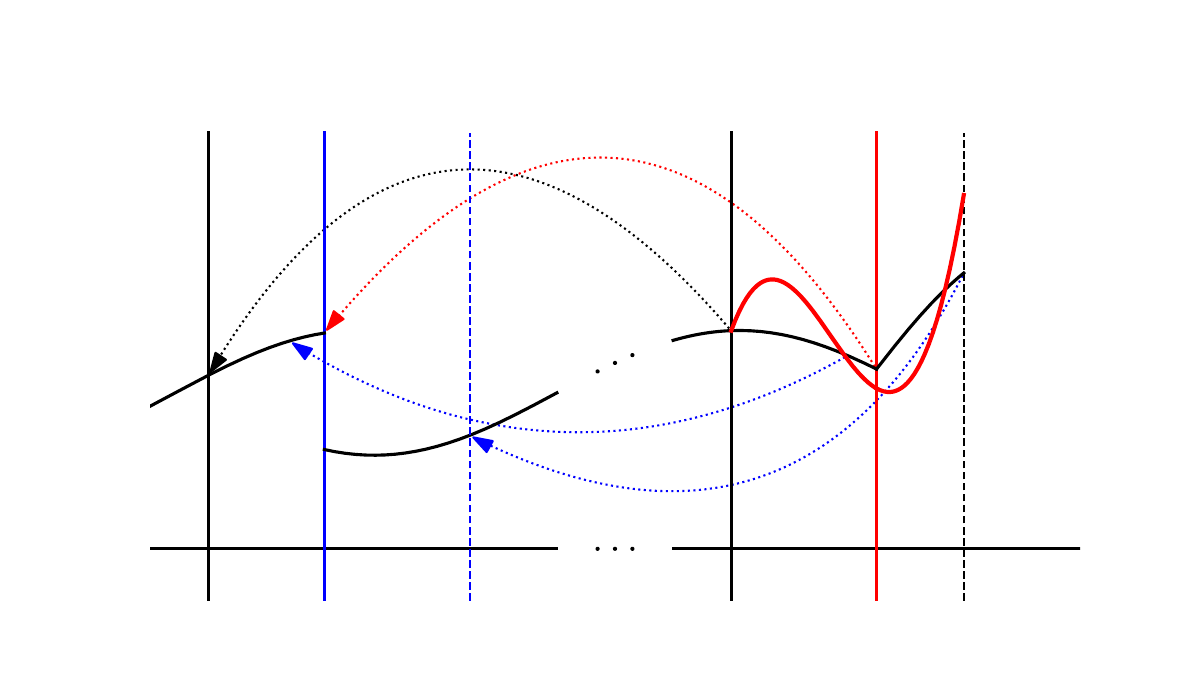}
    \put (-126,11){\scriptsize $t_n$}
    \put (-63,11) {\scriptsize $t_n+h_n$}
    \put (-127,70) {\scriptsize $\uh(t_n)$}
    \put (-302,11) {\tiny $t_{n\!}-\tau(t_{n\!},^{\!}u^{h\!}(t_n))$}
    \put (-237,11) {\scriptsize $\xi_i^h$}
    \put (-87,11) {\scriptsize $\xi_j^h$}
    \put (-265,60) {\scriptsize $\uh_-(t)$}
    \put (-225,35) {\scriptsize $\uh_+(t)$}
    \put (-255,-2) {\scriptsize ``Delayed''}
    \put (-62,86) {\parbox{2cm}{\scriptsize Exact\\  solution}}
    \put (-62,110) {\parbox{2cm}{\scriptsize Bad\\ approximation}}
\caption{An example of a bad approximation over a breaking point of order $0$.} \label{fig:bp3} 
\end{figure}

The problem with this approach is that $\uh(t)$ defines a smooth approximation to the solution $u(t)$ of \eqref{eq:num:dde} for $t\in[t_n,t_{n+1}]$, but $u(t)$ is not itself smooth because of the breaking point 
at $\xi_j^h\in(t_n,t_{n+1})$. The breaking point $\xi_j^h$ is defined from $\xi_i^h$ via \eqref{eq:xij}
and for $t\in[t_n,t_{n+1}]$ parts of the delay term $u(t-\tau(t,u(t)))$ will evaluate to 
$\uhm(t)$ and other parts to $\uhp(t)$, either side of the previous braking point $\xi_i^h$ (as illustrated in Figure~\ref{fig:bp3}).  
The term $u(t-\tau(t,u(t)))$ then lacks sufficient smoothness for the numerical solution to have its full order over the step $[t_n,t_{n+1}]$, and so it is not possible to approximate $\xi_j$ to the full accuracy of the method.

\cite{GH08} do successfully use such reduced order approximations to find breaking points for implicit stiff CRK methods. Since the stage-values of the implicit method need to be found iteratively, they add 
$t_{n+1}-\alpha(t_{n+1},\uh(t_{n+1}))=\xi_i^h$ as an extra equation along with the $s$ stage equations and solve all $s+1$ equations together to find the stage values $k_{n,j}$ along with the breaking point $t_{n+1}$, which is now conveniently at the end of the step. Since their CRK methods are already implicit, the extra cost of this approach is negligible for them.

The approach of \cite{GH08} makes all methods implicit, and so is 
unsuitable for explicit FCRK methods. Here we present
an efficient
method to find breaking points to the accuracy required by \eqref{eq:hpk+1} while 
keeping the method explicit 
(so no implicit equations to solve iteratively) and achieve global order $p$ for FCRK methods. A preliminary version of this algorithm was first reported in \cite{EH15}.

\CCLsubsubsection{Breaking Point Detection Details} 

Rather than giving a formal algorithm statement, we will explain the concepts and steps involved.
Consider \eqref{eq:num:dde} for $t\in[t_n,t_{n+1}]$ when a single breaking point $\xi_i^h$
appears in the delayed term $u(t-\tau)$ during the time-step. Hence we need to solve 
\begin{equation} \label{eq:bp}
\left.\begin{aligned}
\udot(t)&=f(t,u(t),u(t-\tau)), &&  t\in[t_n, t_{n+1}],\\
u(t_n)&=\uh(t_n),\\
u(t-\tau)&=\uhm(t-\tau), && t-\tau \leq \xi_i^h,\\
u(t-\tau)&=\uhp(t-\tau), && t-\tau \geq \xi_i^h.
\end{aligned}\right\}
\end{equation}
As already illustrated in Figure~\ref{fig:bp3}, the lack of smoothness of $u(t-\tau)$ will cause a loss of accuracy for the IVP solver over the step $t\in[t_n,t_{n+1}]$, preventing 
an accurate approximation of the new breaking point $\xi_j^h$. 

\begin{figure}[t!]
\centering
\includegraphics[scale=0.565]{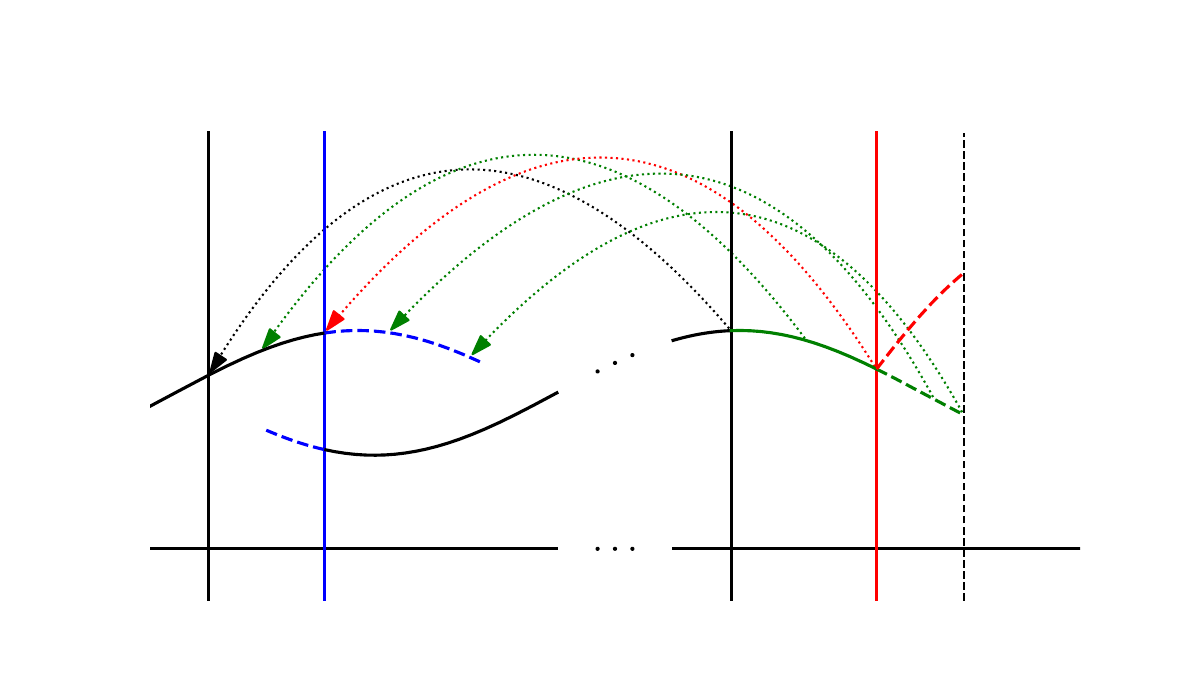}
    \put (-126,11) {\scriptsize $t_n$}
    \put (-63,11) {\scriptsize $t_n+h_n$}
    \put (-120,66) {\scriptsize $\uh(t_n)$}
    \put (-302,11) {\tiny $t_{n\!}-^{\!}\tau(t_{n\!},^{\!}\uh(t_n))$}
    \put (-237,11) {\scriptsize $\xi_i^h$}
    \put (-87,11) {\scriptsize $\xi_j^h$}
    \put (-264,62) {\scriptsize $\uh_-(t)$}
    \put (-205,39) {\scriptsize $\uh_+(t)$}
    \put (-255,-2) {\scriptsize ``Delayed''}
    \put (-63,91) {\parbox{2cm}{\scriptsize Exact\\ solution}}
    \put (-63,54) {\parbox{2cm}{\scriptsize Smooth\\ solution}}
\caption{Accurate breaking point computation in the modified smooth problem \eqref{eq:bpsmooth}.} \label{fig:bp5} 
\end{figure}

To resolve this issue, our algorithm considers the modified problem
\begin{equation} \label{eq:bpsmooth}
\left.\begin{aligned}
\udot(t)&=f(t,u(t),u(t-\tau)), &&  t\in[t_n, t_{n+1}],\\
u(t_n)&=\uh(t_n),\\
u(t-\tau)&=\uhm(t-\tau),
\end{aligned}\right\}
\end{equation}
shown in Figure~\ref{fig:bp5}. In \eqref{eq:bpsmooth} we analytically continue $\uhm(t-\tau)$ across the breaking point $\xi_i^h$ to 
create a modified problem without a breaking point.
The new problem \eqref{eq:bpsmooth} is smooth over the step $t\in[t_n,t_{n+1}]$ and has
an identical solution to \eqref{eq:bp} for $t\in[t_n,\xi_j^h]$. Thus we solve \eqref{eq:bpsmooth} 
for $t\in[t_n,t_{n+1}]$, and use this solution to approximate the new breaking point $\xi_j^h$ to the full accuracy of the FCRK method. Once $\xi_j^h$ is found, 
the solution of \eqref{eq:bpsmooth} for $t\in[t_n,\xi_j]$ can be used to define $\uh(t)$ for $t\in[t_n,\xi_j^h],$
and we proceed to the next step defining $t_{n+1}:=\xi_j^h$.

To turn this idea into an efficient explicit algorithm we still need to be able to use the FCRK solution of 
\eqref{eq:bpsmooth} to solve \eqref{eq:xij} to find $\xi_j^h$ to the accuracy required by \eqref{eq:hpk+1}.
To do this we compute every step $t\in[t_n,t_{n+1}]$ of the solution of \eqref{eq:num:dde} by first solving
\eqref{eq:bpsmooth}, using the smooth delayed term $\uhm(t-\tau)$. If none of the previous
breaking points $\xi_i^h$ satisfy
\be \label{eq:bkptneeded}
(t_{n+1}-\tau(t_{n+1},\uh(t_{n+1}))-\xi_i^h)(t_{n}-\tau(t_{n},\uh(t_{n}))-\xi_i^h) \leq 0
\ee
then $\uh(t-\tau)$ is sufficiently smooth on that step, and we go on to the next step.
But if \eqref{eq:bkptneeded} is satisfied
for some previous breaking point $\xi_i^h$ of order$\leq p-3$, then there will be a new breaking point 
$\xi_j^h\in(t_n,t_{n+1}]$ of order$\leq p-2$ that needs to be approximated. To do this define
\be \label{eq:alphat}
\alpha(t)=t-\tau(t,\uh(t)), \quad t\in[t_n,t_{n+1}].
\ee
Then $\xi_j^h$ is defined by 
\be \label{eq:bpalph}
\alpha(\xi_j^h)=\xi_i^h. 
\ee
To solve \eqref{eq:bpalph} efficiently 
we replace $\alpha(t)$ by a quadratic polynomial.
Let $\alpha_n=\alpha(t_n)$, $\alpha_{n+1}=\alpha(t_{n+1})$, 
$\alpha_{n+\frac12}=\alpha(t_{n+\frac12})$, where $t_{n+\frac12}=\frac12(t_n+t_{n+1})=t_n+\frac12h_n$, and define the finite differences
\begin{gather} \label{eq:Delta}
\Delta\alpha =\dfrac{1}{h_n}(\alpha_{n+1}-\alpha_n)=\alpha'(t_{n+\frac12})+\cO(h_n^2),\\ \label{eq:Delta2}
\Delta^{\!2}\alpha =\dfrac{1}{h_n^2}(\alpha_{n+1}-2\alpha_{n+\frac12}+\alpha_n)=\alpha''(t_{n+\frac12})+\cO(h_n^2).
\end{gather}
Then let $p(\theta)$ be the quadratic polynomial
\be \label{eq:pquad}
p(\theta)=\alpha_n+h_n\theta\Delta\alpha-2\theta(1-\theta)h_n^2\Delta^{\!2}\alpha
\ee
which interpolates $\alpha(t_n+\theta h_n)$ at $\theta=0$, $\tfrac12$ and $1$. 
Define $\theta_j^p$ by 
\be \label{eq:bpp}
p(\theta_j^p)=\xi_i^h
\ee
so that 
$\xi_j^p=t_n+\theta_j^p h_n$ is an approximation to $\xi_j^h$. 
If $\Delta^{\!2}\alpha=0$ then it follows from \eqref{eq:pquad}
and \eqref{eq:bpp} that
\be \label{eq:xiplin}
\xi_j^p=t_n+\theta_j^p h_n=t_n+\frac{\xi_i^h-\alpha_n}{\Delta\alpha},
\ee
otherwise taking the negative square root in the quadratic formula
\begin{align} \label{eq:xipquad}
\xi_j^p
&=t_n
+ \frac{2h_n\Delta^2\alpha-\Delta\alpha}{4\Delta^2\alpha}
\left(1-\left(1+\frac{8(\xi_i^h-\alpha_n)\Delta^2\alpha}{(2h_n\Delta^2\alpha-\Delta\alpha)^2}\right)^\frac12\right)
\\ \label{eq:xipquadexp}
&=t_n+\frac{\xi_i^h-\alpha_n}{\Delta\alpha-2h_n\Delta^2\alpha}
-\frac{2(\xi_i^h-\alpha_n)^2\Delta^2\alpha}{(\Delta\alpha-2h_n\Delta^2\alpha)^3}
+\cO(h_n^3).
\end{align}
Equation \eqref{eq:xipquadexp} follows from \eqref{eq:xipquad} using the binomial series, noting that 
$\xi_i^h\in[\alpha_n,\alpha_{n+1}]$ and \eqref{eq:Delta} implies that $\xi_i^h-\alpha_n=\cO(h_n)$.
The negative square root is required in \eqref{eq:xipquad} to obtain the correct answer, which we can see from 
\eqref{eq:xipquadexp} is a perturbation of the linear approximation \eqref{eq:xiplin}. (The positive square root would result in $\xi_j^p-t_n\to-\alpha'(t_{n+\frac12})/2\alpha''(t_{n+\frac12})$ as $h_n\to0$, which is not $\cO(h_n)$.)

By standard approximation results for quadratic interpolating polynomials, when $\xi_j^h$ is a simple root
of \eqref{eq:bpalph} we have
\be \label{eq:xiapprox}
|\xi_j^p-\xi_j|\leq |\xi_j^p-\xi_j^h|+|\xi_j^h-\xi_j|=\cO(h^{\min(p,3)}).
\ee
Equation \eqref{eq:xiapprox} holds whether we use \eqref{eq:xipquad} or 
the truncated form in \eqref{eq:xipquadexp} to evaluate $\xi_j^p$. Equation \eqref{eq:xipquadexp} may be preferable to implement, as it avoids the cancellation errors that arise in
\eqref{eq:xipquad} when $h_n$ is very small. 

Table~\ref{tab:Ohr} shows that $\xi_j^p$ defined by \eqref{eq:xipquad} or \eqref{eq:xipquadexp},
gives an approximation of the required accuracy for all methods of order $p\leq6$ for breaking points of order $k\geq1$. However, it is not sufficiently accurate
for a breaking point of order $k=0$
to obtain order $p\geq4$ convergence. For the FRCK4 method with a breaking point of order $k=0$
we use $t_n$ and $\xi_j^p$ as the base points to do one step of Secant correction to  
$\xi_j^p$ to obtain $\xi_j^s$ defined by
\be \label{eq:sec1}
\xi_j^s=\textrm{secant}(\xi_j^p,t_n)=\xi_j^p-(\alpha(\xi_j^p)-\xi_i)\frac{\xi_j^p-t_n}{\alpha(\xi_j^p)-\alpha_n}.
\ee
From the standard Secant method error analysis (see e.g.~Section 3.2.3 of \cite{IK66}) we obtain
\be \label{eq:secerr1}
|\xi_j^s-\xi_j^h|=\cO(|\xi_j^p-\xi_j^h|\times|\xi_j^h-t_n|)=\cO(h^3\times h)=\cO(h^4),
\ee
so $\xi_j^s$ has the needed accuracy for the FCRK4 method to obtain global order $4$ with an order $0$ breaking point.  If fifth or higher order FCRK methods are implemented, a second secant step could be used 
to replace $\xi_j^s$ by 
$\xi_j^{ss}=\textrm{secant}(\xi_j^s,\xi_j^p)$ for order $0$ breaking points, for which
\be \label{eq:secerr2}
|\xi_j^{ss}-\xi_j^h|=\cO(|\xi_j^s-\xi_j^h|\times|\xi_j^p-\xi_j^h|)=\cO(h^4\times h^3)=\cO(h^7).
\ee

\CCLsubsubsection{Breaking Point Detection Algorithm} 

To implement the algorithm for an FCRK$p$ method, we begin with a list of the breaking points $t=\xi_i\leq t_0$ arising from the initial function and their orders for all breaking points of order $k\leq p-3$ (not forgetting to include $t_0$ itself). This list divides $[t_0-\tau_{\textit{max}},t_0]$ into subintervals  on which
the solution $u(t)$ is smooth. We assume that there are finitely many such breaking points so that these subintervals all have non-zero length. At the initial time the delay $t_0-\tau(t_0,u(t_0))$ falls into one such interval $[\xi_i,\xi_{i+1})$. 

To advance the solution through one step $[t_n,t_{n+1}]$ we first solve the problem \eqref{eq:bpsmooth}, with 
the smooth delayed term $\uhm(t-\tau)$ defined by taking the function $\uh(t-\tau)$ defined for $t-\tau\in[\xi_i,\xi_{i+1})$ and analytically extending it to the real line. Since $\uh$ is typically polynomial or piecewise polynomial this extension is trivially obtained by just evaluating the polynomial expression outside the domain on which it is a valid approximation of the solution.

Once \eqref{eq:bpsmooth} is solved for $t\in[t_n,t_{n+1}]$, evaluate the right-hand side of \eqref{eq:bkptneeded} for each previous delay $\xi_i$ in the list. If \eqref{eq:bkptneeded} is not satisfied for any $\xi_i$, then there is no breaking point for this step, and the step is accepted. If \eqref{eq:bkptneeded} is satisfied
for one or more breaking points,
determine the new (approximated) breaking point $\xi_j\in(t_n,t_n+1]$ using either \eqref{eq:xipquad} or 
\eqref{eq:xipquadexp} (or using \eqref{eq:sec1} for FCRK4 if $\xi_i$ is an order $-1$ breaking point).
If there is more than one such $\xi_j\in(t_n,t_n+1]$ then discard all but the first one. 
We then redefine $t_{n+1}$ to be equal to $\xi_j$, and 
the already computed solution of \eqref{eq:bpsmooth} is truncated to define the solution $u^h(t)$ for $t\in[t_n,\xi_j]=[t_n,t_{n+1}]$.

Before moving on to the next step, if $\xi_i$ was a breaking point of order $k$ then the new breaking point $\xi_j$ is assigned order $k+1$ (in reality it might have even higher order, but its order cannot be less than $k+1$), and if $k+1\leq p-3$ it is added to the list of previous breaking points. Finally, for the next step we update the interval in which $t-\tau$ falls to be the other side of the breaking point $\xi_i$ that triggered
the new breaking point $\xi_j$. This is illustrated in Figure~\ref{fig:bp5}, where the function $\uhp(t)$ and its analytic extension will be used to compute the solution on the next step. 

For problems for which $t-\tau(t,u(t))$ is a monotonically increasing function of $t$, the breaking point $\xi_i$
will not be encountered again, and can be deleted from the list. Otherwise it remains in the list, but because of the numerical approximation errors may be encountered again at the beginning of the next step in which case it is ignored.

Once $t>t_0+(p-1)\tau_\textit{max}$ there can be no more breaking points that affect the order of the method, and breaking point detection can be turned off. Problems with multiple delays are handled similarly, and just require more bookkeeping.

\CCLsubsubsection{Performance of FCRK methods with breaking points}

\begin{figure}[t!]
\centering
\scalebox{0.9}{\includegraphics{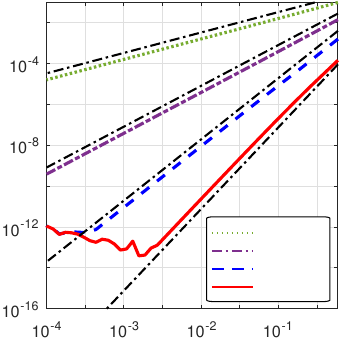}
\put(-38,50){\scriptsize FCRK1}
\put(-38,41){\scriptsize FCRK2}
\put(-38,32){\scriptsize FCRK3}
\put(-38,24){\scriptsize FCRK4}
\put(-135,135){\rotatebox{12}{\scriptsize Slope 1 line}}
\put(-135,93){\rotatebox{24}{\scriptsize Slope 2 line}}
\put(-115,66){\rotatebox{36}{\scriptsize Slope 3 line}}
\put(-53,70){\rotatebox{48}{\scriptsize Slope 4 line}}
\put(-14,3){\scriptsize $h$}
\put(-160,148){\scriptsize $Err$}
\put(-138,155){\scriptsize (a)}}\hspace*{2em}\scalebox{0.9}{\includegraphics[scale=1]{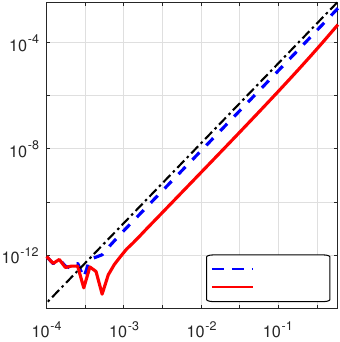}
\put(-38,32){\scriptsize FCRK3}
\put(-38,24){\scriptsize FCRK4}
\put(-110,60){\scriptsize \rotatebox{45}{Slope 3 line}}
\put(-14,3){\scriptsize $h$}
\put(-160,106){\rotatebox{90}{\scriptsize $|\xi-\xi^h|$}}
\put(-138,155){\scriptsize (b)}}

\scalebox{0.9}{\includegraphics[scale=1]{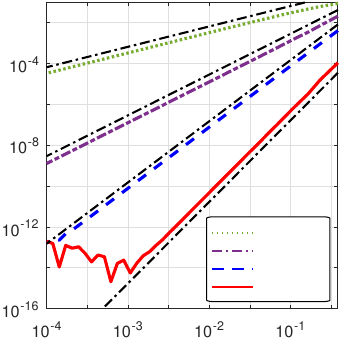}
\put(-38,50){\scriptsize FCRK1}
\put(-38,41){\scriptsize FCRK2}
\put(-38,32){\scriptsize FCRK3}
\put(-38,24){\scriptsize FCRK4}
\put(-135,137){\rotatebox{12}{\scriptsize Slope 1 line}}
\put(-135,97){\rotatebox{24}{\scriptsize Slope 2 line}}
\put(-115,72){\rotatebox{36}{\scriptsize Slope 3 line}}
\put(-54,69){\rotatebox{48}{\scriptsize Slope 4 line}}
\put(-10,3){\scriptsize $h$}
\put(-160,148){\scriptsize $Err$}
\put(-138,155){\scriptsize (c)}}\hspace*{2em}\scalebox{0.9}{\includegraphics[scale=1]{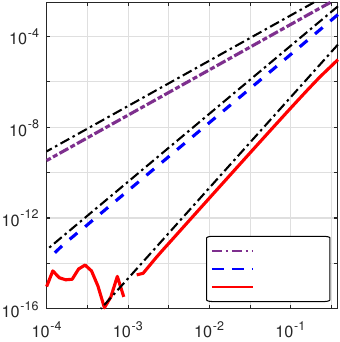}
\put(-38,41){\scriptsize FCRK2}
\put(-38,32){\scriptsize FCRK3}
\put(-38,24){\scriptsize FCRK4}
\put(-110,114){\rotatebox{28}{\scriptsize Slope 2 line}}
\put(-110,76){\rotatebox{41}{\scriptsize Slope 3 line}}
\put(-82,61){\rotatebox{48}{\scriptsize Slope 4 line}}
\put(-10,3){\scriptsize $h$}
\put(-160,106){\rotatebox{90}{\scriptsize $|\xi-\xi^h|$}}
\put(-138,155){\scriptsize (d)}}
\caption{The error
$Err$ \eqref{eq:Err} for the four methods FCRK1-4
(a) for Problem \eqref{eq:test1} with breaking point detection applied for FCRK3-4 and 
(c) for Problem \eqref{eq:test2} with breaking point detection applied for FCRK2-4, with one secant step for FCRK4.
(b) The error in the computed order $1$ breaking point for FCRK3-4 for Problem \eqref{eq:test1}
and (d) the error in the computed order $0$ breaking point for FCRK2-4 for Problem \eqref{eq:test1}.} \label{fig:testEH12}
\end{figure}

In Figure~\ref{fig:testEH12} we present the results of applying our algorithm to the test problems
\eqref{eq:test1} and \eqref{eq:test2}. We previously solved these problems without breaking point detection
in Figure~\ref{fig:testnone}.

Figure~\ref{fig:testEH12}(a) and (b) show the 4 FCRK methods applied to \eqref{eq:test1}
over the time interval $t\in[1,5]$, for which there is a breaking point of order $1$ at $\xi=2$. As in the previous tests we define constant step-sizes $h$ using \eqref{eq:hlambda} with $\lambda=1/2$ so that the breaking point is between mesh points. Since the only breaking point is of order $1$, no breaking point detection is applied for FCRK1 or FCRK2, while for FCRK3 and FCRK4 the breaking point is approximated by $\xi_j^p$ defined by \eqref{eq:xipquad}, as described above. Figure~\ref{fig:testEH12}(a) shows that each FCRK$p$ method achieves order $p$ convergence
until the error is smaller than $10^{-12}$ when finite precision arithmetic errors become significant.  
Figure~\ref{fig:testEH12}(b) shows that $\xi_j^p$ given by \eqref{eq:xipquad} defines an $\cO(h^3)$ approximation to the exact breaking point $\xi_j$
of the original problem \eqref{eq:test1}, in accordance with \eqref{eq:xiapprox}. 

Figure~\ref{fig:testEH12}(c) and (d) show the 4 FCRK methods applied to \eqref{eq:test2}
over the time interval $t\in[0,1]$, for which there is a breaking point of order $0$ at $\xi_j\in(0,1)$. 
No breaking point detection is applied for FCRK1, while for FCRK2 and FCRK3 the breaking point is approximated by $\xi_j^p$ defined by \eqref{eq:xipquad}. For FCRK4 the Secant approximation  $\xi_j^s$ defined by \eqref{eq:sec1} 
is used to approximate $\xi_j$. Recall from Figure~\ref{fig:testnone}(d) that without breaking point detection all of the methods displayed only first order convergence, even when the exact breaking point $\xi_j$ was included in the computational mesh. Figures~\ref{fig:testEH12}(c) and (d) show that applying our breaking point detection algorithm, all of the methods retain their full order, and also find the breaking point to the full order of accuracy of the method.

\begin{figure}[t!]
\centering
\scalebox{0.9}{\includegraphics[scale=1]{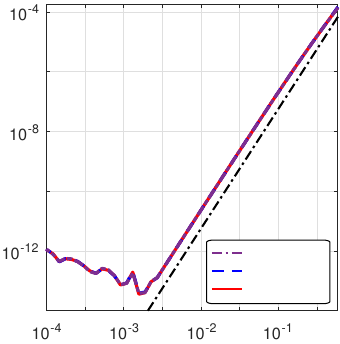}
\put(-45,41){\scriptsize FCRK4(2)}
\put(-45,32){\scriptsize FCRK4(3)}
\put(-45,24){\scriptsize FCRK4(4)}
\put(-53,69){\scriptsize \rotatebox{57}{Slope 4 line}}
\put(-10,3){\scriptsize $h$}
\put(-160,140){\scriptsize $Err$}
\put(-138,155){\scriptsize (a)}}\hspace*{2em}\scalebox{0.9}{\includegraphics[scale=1]{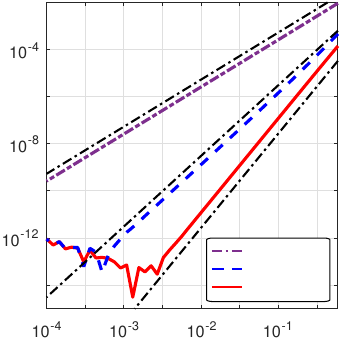}
\put(-45,41){\scriptsize FCRK4(2)}
\put(-45,32){\scriptsize FCRK4(3)}
\put(-45,24){\scriptsize FCRK4(4)}
\put(-89,118){\rotatebox{30}{\scriptsize Slope 2 line}}
\put(-89,77){\rotatebox{42}{\scriptsize Slope 3 line}}
\put(-55,61){\rotatebox{50}{\scriptsize Slope 4 line}}
\put(-10,3){\scriptsize $h$}
\put(-160,103){\rotatebox{90}{\scriptsize $|\xi-\xi^h|$}}
\put(-138,155){\scriptsize (b)}}
\caption{For Problem \eqref{eq:test1}, (a) the error
$Err$ defined by \eqref{eq:Err} for the FCRK4 method with three different orders of breaking point detection, as described in the text, and
(b) the error $|\xi-\xi^h|$ in the computed order $1$ breaking point.} \label{fig:testEH4}
\end{figure}

In Figure~\ref{fig:testEH4} we consider the FCRK4 method applied to the test problem \eqref{eq:test1} again, but with different approximations to the breaking point $\xi_j$. FCRK4(4) approximates $\xi_j$ using 
the Secant approximation $\xi_j^s$, FCRK4(3) uses the 
standard quadratic approximation \eqref{eq:xipquad}, while FCRK4(2)
uses the linear approximation \eqref{eq:xiplin}.
We already saw in Figure~\ref{fig:testnone}(a) that the FCRK4 method is reduced to second order when applied without breaking point detection, in which case $\xi_j$ is only approximated to order $h$ accuracy by the fixed mesh. 

According to Theorem~\ref{thm:approx} it is only necessary 
to find a breaking point of order $k=1$ to accuracy $\cO(h^{p/(k+1)})=\cO(h^2)$ to obtain order $p=4$, so all three implementations should retain their full order. This is 
confirmed in Figure~\ref{fig:testEH4} where the three implementations FCRK4(q) all display fourth
order convergence though they only approximate the breaking point to order $q$ accuracy for $q\in\{2,3,4\}$.
Thus fourth order would be retained by the slightly cheaper algorithm where $\alpha_{n+\frac12}$ is not computed and $\xi_j^p$ is evaluated using
\eqref{eq:xiplin} instead of \eqref{eq:xipquad}, at the cost only of a poorer approximation to $\xi_j$.
On the other hand, if desired, $\xi_j^s$ provides an approximation to $\xi_j$ 
to the full order of the method, at the cost of evaluating both    $\alpha_{n+\frac12}$
and $\alpha(\xi_j^p)$, and without changing the convergence order of the FCRK4 method.


\CCLsubsection{IVPs with Distributed and Threshold Delays}
\label{sec:numdist}

\cite{Maset25} gives a theoretical treatment of numerical methods for RFDEs in another chapter of this volume,
but there are not yet widely available reliable numerical implementations for RFDEs or threshold delays.
Methods to solve DDEs with discrete constant and state-dependent delays are in widespread use, but standard
software packages for solving DDE IVPs
only allow access to the
solution history $u_t(\theta)$ for a finite set of values of $\theta\in[-\tau,0]$ during computation.
This prevents them from being directly applicable for both threshold delay problems such as
\eqref{eq:scalarDE},\eqref{eq:thre_scalarDE} and for DDEs with constant distributed delays.
The hematopoiesis model \eqref{eq:Q}-\eqref{eq:N} is an example of a DDE with 
a distributed delay, as the term $A_N(t)$, defined by \eqref{eq:AN},
involves an integral of a function of the state-variable $G$ over the time interval
$[t-\tauNM(t)-\tauNP,t-\tauNM(t)]$, as well as the threshold delay $\tauNM(t)$ defined by \eqref{eq:tauNM}.

While it is possible to develop solvers for specific problems (see \cite{Langlois2017,CGHvD22}),
it is a lot of work to reinvent and recode all the functionality of a sophisticated IVP solver for each new 
problem. Until a comprehensive numerical RFDE IVP solver is available, it is convenient and time saving to leverage the existing discrete delay software packages to solve problems with distributed delays. 

In Sections~\ref{subsec:scalthresdde} and~\ref{sec:linearize} we already
proposed to replace
\eqref{eq:scalarDE},\eqref{eq:thre_scalarDE} by the system
\eqref{eq:scalarDE},\eqref{eq:taudot} with the additional initial condition
\eqref{eq:thre_scalarDEIC}. This converts the threshold delay problem to a discrete delay DDE,
to which the methods discussed in the previous section can be applied. 

In general solving for $\tau_0$ from \eqref{eq:thre_scalarDEIC} requires evaluating the integral only for the first step of the computation, but even this can be avoided if the initial function $\phi$ is taken to be a constant function, $\phi(t)=\phi$,
in which case \eqref{eq:thre_scalarDEIC} reduces to $\tau_0=a/V(\phi)$. 
After the numerical computation of the IVP solution of \eqref{eq:scalarDE},\eqref{eq:taudot}
it should be verified that the computed solution of \eqref{eq:taudot} satisfies
\eqref{eq:thre_scalarDE} to a reasonable tolerance over the domain of the numerical computation,
which is easy to do, using the numerical solution interpolant $u^h(t)$.
This is necessary, as the value of the constant $a$ only enters through the initial condition \eqref{eq:thre_scalarDEIC},
and numerical errors may accumulate through the computation.

If the threshold condition is not solved to a reasonable accuracy then a penalty term can be added to \eqref{eq:taudot}
to obtain
\begin{equation} \label{eq:taudotpen} 
 \frac{\phantom{t}d}{dt}\tau
 = 1- \frac{V(u(t))}{V(u(t-\tau))}+\gamma\left[a-\int^{t}_{t-\tau}\hspace*{-0.5em}
 V(u(s)) ds\right].
\end{equation}
Even though we have reintroduced an integral term here, it is much easier to numerically evaluate the integral in \eqref{eq:taudotpen} than in \eqref{eq:thre_scalarDE} because $\tau$ is not implicitly defined in \eqref{eq:taudotpen}, but rather is known for any value of $t$ through the solution of \eqref{eq:taudotpen} up to that time point.
\cite{wendy-phd} discusses \eqref{eq:taudotpen} at length, where it is found that for $\gamma>0$ this stabilises the computation
of the threshold delay, but numerical evaluation of the integral in \eqref{eq:taudotpen} introduces an error of $10^{-8}$ or worse in the threshold condition \eqref{eq:thre_scalarDE}. Equation \eqref{eq:taudotpen} was not used by
\cite{gedeon2024dynamics} where it was found that \eqref{eq:taudot} resulted in errors smaller than $10^{-12}$ in the threshold condition \eqref{eq:thre_scalarDE}.

Differentiating a constraint to obtain an extra differential equation is very reminiscent of differential algebraic equations (DAEs). In particular, the system \eqref{eq:scalarDE}-\eqref{eq:thre_scalarDE} can be regarded as an index-1 retarded DAE. 

To convert the model \eqref{eq:Q}-\eqref{eq:N} to a discrete
delay DDE in \cite{craig2016} both \eqref{eq:tauNM} and \eqref{eq:AN} are differentiated
to obtain
\begin{gather} \label{eq:tauNMdiff}
\tfrac{d}{dt}\tauNM(t) =1-\tfrac{\VN(G(t))}{\VN(G(t-\tauNM(t)))}  \\ \notag
\tfrac{d}{dt}A_{N\!}(t)  = A_{N\!}(t)\Bigl[\bigl(1-\tfrac{d}{dt}\tauNM(t)\bigr)\bigl(\etaNP(G(t\!-\!\tauNM(t))) -\etaNP(G(t\!-\!\tau_N(t)))\bigr)\\ \label{eq:ANdiff}
 \hspace{3em}  -\gammaNM\tfrac{d}{dt}\tauNM(t)\Bigr].
\end{gather}
Just as with the simpler model problem \eqref{eq:scalarDE},\eqref{eq:thre_scalarDE}, care needs to be taken to use the correct initial condition for both \eqref{eq:tauNMdiff} and \eqref{eq:ANdiff}. 
Also, after the IVP is solved it should be verified that the numerically computed solutions of \eqref{eq:tauNMdiff} and \eqref{eq:ANdiff}
still satisfy \eqref{eq:tauNM} and \eqref{eq:AN} over the solution interval to an acceptable tolerance, as was found to be the case in \cite{craig2016}.


\CCLsubsection{Numerical Continuation and Bifurcation Detection} 
\label{subsec:num_bvps}

Numerical continuation and bifurcation techniques for ODEs are very mature and well known, and there are many extensions to constant delay DDEs. For discrete state-dependent delays DDE-Biftool (\cite{ddebiftool}) provides a suite of
\cite{matlab} routines for detecting bifurcations and performing numerical continuation of solution branches to compute bifurcation diagrams. This is described extensively by \cite{KS23} in a previous CISM volume, and so will not be repeated here.

As for the DDE IVP solvers, DDE-Biftool is formulated to solve \eqref{eq:ddde} with discrete delays, and so is not directly applicable for threshold delays or more general RFDE problems \eqref{eq:rfde}. 
In order to leverage all of the capabilities of DDE-Biftool without having to write a new software package from scratch, our preferred approach is to alter the formulation of the threshold delay DDE so that DDE-Biftool can be used. Differentiating the threshold condition, as we did to solve IVPs in Section~\ref{sec:numdist} does not work, because the value of the constant $a$ in \eqref{eq:thre_scalarDE} is lost when it is differentiated.

To apply DDE-Biftool to \eqref{eq:scalarDE},\eqref{eq:thre_scalarDE} we introduce extra dummy delays to discretise
the threshold integral.
Recall \eqref{ineq:thre_bdd}, which implies that the delay $\tau=\tau(u_t)$ satisfies
\begin{equation*}
	t-\tau \in [t-\tau_{\textit{max}}, t-\tau_{\textit{min}}] \subset [t-\tau_{\textit{max}}, t] . 
\end{equation*}
We discretise the time interval $[t-\tau_{\textit{max}}, t]$ uniformly with a sequence of  mesh points
\begin{equation*}
	t = x_0 > x_1 > \cdots > x_N = t-\tau_{\textit{max}},
\end{equation*}
and define $N$ constant ``dummy'' delays 
\begin{equation*}
	\tau_j = t-x_j = \frac{j}{N}\tau_{\textit{max}} = \frac{j}{N} \frac{a}{V_{\textit{min}}}.
\end{equation*}
Let $J_h(j)$ denote the cumulative quadrature sum over the first $j$ subintervals
\begin{equation*}
	J_h(j) = \int_{x_j}^{x_0} V(u(s)) ds = \int_{t-\tau_j}^{t} V(u(s)) ds . 
\end{equation*}
To approximate $\tau$ that solves \eqref{eq:thre_scalarDE}, we successively add subintervals to the integral and identify the index $j$ such that 
\begin{equation*}
	a \geq J_h(j) \quad \text{and} \quad a < J_h(j+1), 
\end{equation*}
which implies that $\tau \in [\tau_j, \tau_{j+1})$. 

\begin{figure}[tp!]
	\centering
	\includegraphics[scale=0.25]{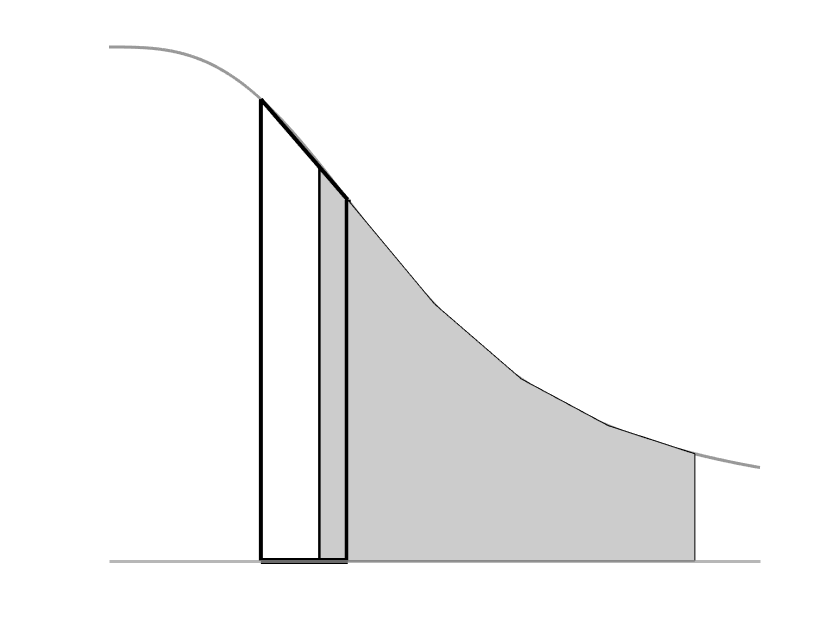}
	\put(-186,8){$x_N$}
	\put(-156,8){$x_{j+1}$}
	\put(-126,8){$x_j$}
	\put(-62,8){$x_1$}
	\put(-40,8){$x_0=t$}
	\put(-100,40){$J_h(j+\theta)$}
	\put(-148,136){$V(u(x_{j+1}))$}
	\put(-125,111){$V(u(x_{j}))$}
	\caption{A schematic representation of the numerical quadrature used to approximate the threshold delay in equation \eqref{eq:thre_scalarDE}.} \label{fig:thresquad}
\end{figure}

It remains to find $\theta\in[0,1)$ so that $J_h(j+\theta)=a$ and 
hence $\tau_j = (j+\theta)a/N V_{\textit{min}}$. To do this we extend the definition of $J_h(j)$ to non-integer values of $j$ as illustrated in Figure~\ref{fig:thresquad}. See \cite{gedeon2022operon,ifac22} for further details.

In this way \eqref{eq:scalarDE},\eqref{eq:thre_scalarDE} is converted into a discrete-delay DDE with $N$ constant delays and one state-dependent delay $\tau$ (whose value depends on the value of $u(t)$ and $u(t-\tau_j)$ for each of the constant delays. DDE-Biftool can be applied directly to this problem, and in Section~\ref{subsec:scalarDEonethres} we will present numerical results obtained using this approach.

\CCLsection{Examples}
\label{sec:examples}

We finish with two extended examples: in Section~\ref{subsec:scalarDEonethres} we present the dynamics of the scalar DDE with threshold delay \eqref{eq:scalarDE}, and 
in Section~\ref{subsec:scalarDEtwosd} we present the dynamics of the scalar DDE with two linearly state-dependent discrete delays  \eqref{eq:twostatedep}.

\CCLsubsection{Scalar Threshold Delay Example}
\label{subsec:scalarDEonethres}

Following \cite{gedeon2024dynamics}, we consider the scalar threshold delay DDE
\eqref{eq:scalarDE},\eqref{eq:thre_scalarDE}
with $g(u)$ and $V(u)$ defined by the Hill functions
\begin{equation} \label{eq:gV}
	g(u) =\dfrac{g^{-}\theta_g^n + g^{+} u^n}{\theta_g^n+u^n}, \quad V(u) =\dfrac{v^-\theta_v^m + v^{+} u^m}{\theta_v^m+u^m}
\end{equation} 
where $g^-, g^+, v^-, v^+, \theta_g, \theta_v > 0$ and $m, n \in \mathbb{N}$. When $v^+=v^-$ so $V(u)$ is a constant function, the system reduces to a constant delay DDE.
\cite{gedeon2024dynamics} study all of the different combinations of increasing, decreasing, or constant
functions $g$ and $V$.
Here we highlight some interesting cases.

\begin{figure}[tp!]
	\centering	
	\includegraphics[scale=0.43]{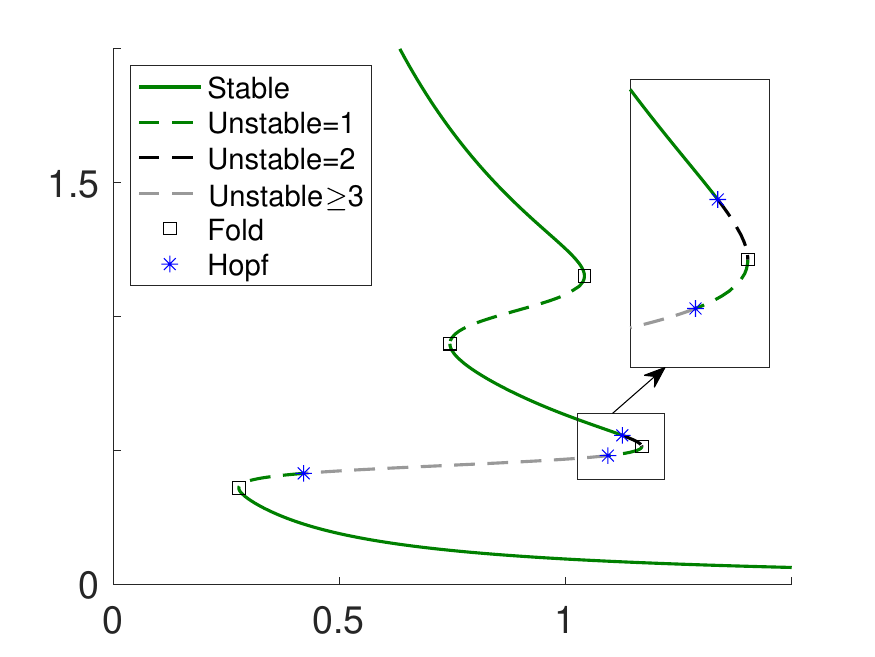}
	\put(-152,117){\rotatebox{90}{$u$}}
	\put(-15,5){$\gamma$}
	\put(-155,68){$\theta_g$}
	\put(-155,38){$\theta_v$}
    \put(-70,115){$(a)$}
\includegraphics[scale=0.43]{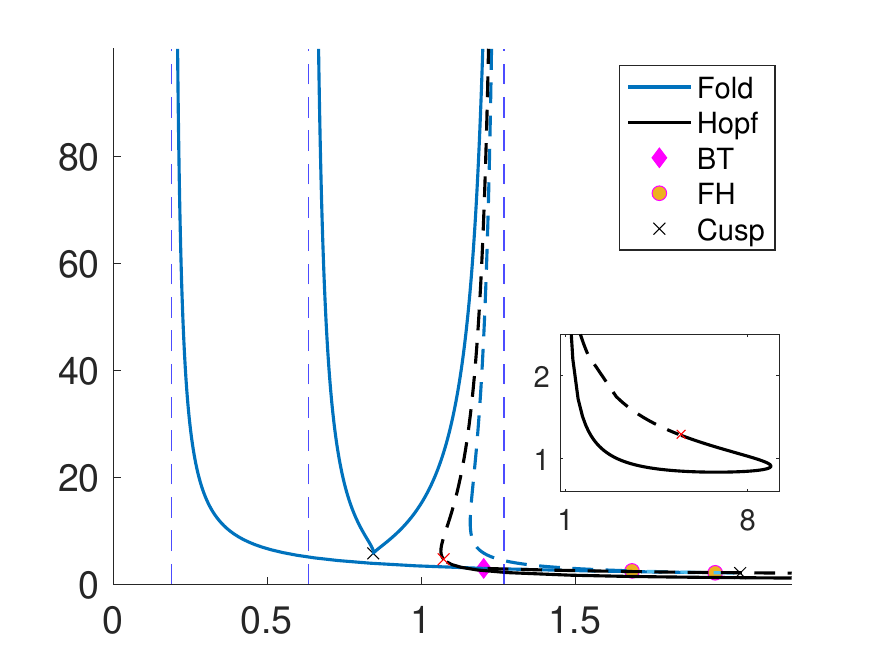}
	\put(-95,115){$(b)$}
	\put(-150,107){\rotatebox{90}{\scriptsize$m=n$}}
	\put(-14,5){$\gamma$}
	\put(-138,5){$\gamma_4$}
	\put(-108,5){$\gamma_2$}
	\put(-69,5){$\gamma_1$}
	\caption{Bifurcation diagram of \eqref{eq:scalarDE}-\eqref{eq:thre_scalarDE} with $g$ and $V$ both increasing and
    parameters $\beta=1.4$, $\mu=0.2$, $g^-=0.5$, $g^+=1$, $\gamma=0.8$, $\theta_v=0.5$, $\theta_g=a=1$, $v^-=0.1$ and $v^+=2$.
    (a) One parameter continuation in $\gamma$ with $m=n=20$. (b)  Two-parameter continuations in $m=n$ and $\gamma$ of the fold and the Hopf bifurcations. (Reproduced from \cite{gedeon2024dynamics} under CC-BY 4.0 license.)}
	\label{fig:gupvup}
\end{figure}

When $g(u)$ and $V(u)$ are both increasing functions, it is possible to have up to five coexisting steady states as illustrated in Figure~\ref{fig:gupvup}(a). In particular, a region of tristability is observed, characterised by three stable steady states separated by two unstable ones. This contrasts sharply with the dynamics of the same operon model with constant state delay, for which at most three steady states are observed when $g$ is an increasing Hill function. If we flip the axes in the
Figure~\ref{fig:gupvup}(a) we see that each value of $u>0$ is a steady state for a unique value of $\gamma$, and that the 
values of $\theta_g$ and $\theta_v$ organise the locations of the fold bifurcations.

Furthermore, Figure~\ref{fig:gupvup}(b) shows the $\gamma-m$ two-parameter continuation of the fold and Hopf bifurcations along with codimension-2 bifurcations. This  
displays other dynamics in the threshold model that are not seen in the corresponding constant delay model including the loss of stability at a Hopf bifurcation
(see also Figure~\ref{fig:gupvup}(a) inset) and the codimension-two cusp, Bautin, Bogdanov-Takens (BT), and fold-Hopf bifurcations. The values $\gamma_j$ indicated in Figure~\ref{fig:gupvup}(b) come from an analysis of the dynamics in the limiting case as $m$ and/or $n\to\infty$ and the Hill function becomes a step function.
Of course, $m$ and $n$ in \eqref{eq:gV} appear in different functions representing different processes, so there is no reason to suppose they are equal other than mathematical convenience. \cite{gedeon2024dynamics} show that different behaviour is observed in the limiting case if one of $m$ and $n$ is increased faster than the other.

This example highlights the interesting dynamics that can be driven by state-dependent threshold delays.
It also serves as a warning, that care needs to be taken when considering  
the rescaling presented in Section~\ref{subsec:rescaletime}. Even though it may be possible to rescale time so that the delay becomes constant, the dynamics will not in general be the same as if the delay in the threshold delay problem is just replaced by a constant delay.  


\begin{figure}[tp!]
	\centering	
	\hspace*{-1em}\includegraphics[scale=0.42]{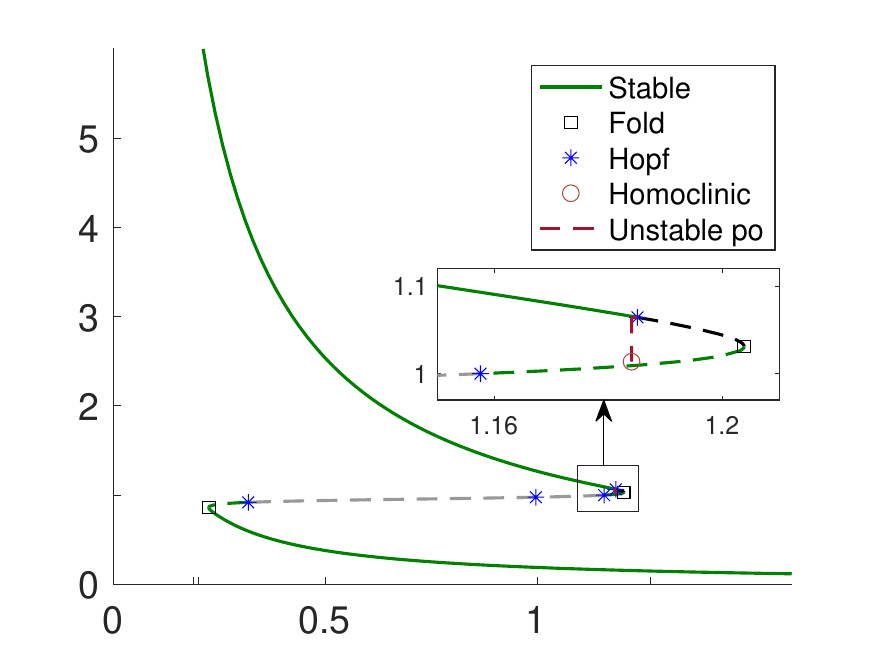}\hspace*{-1em}\includegraphics[scale=0.42]{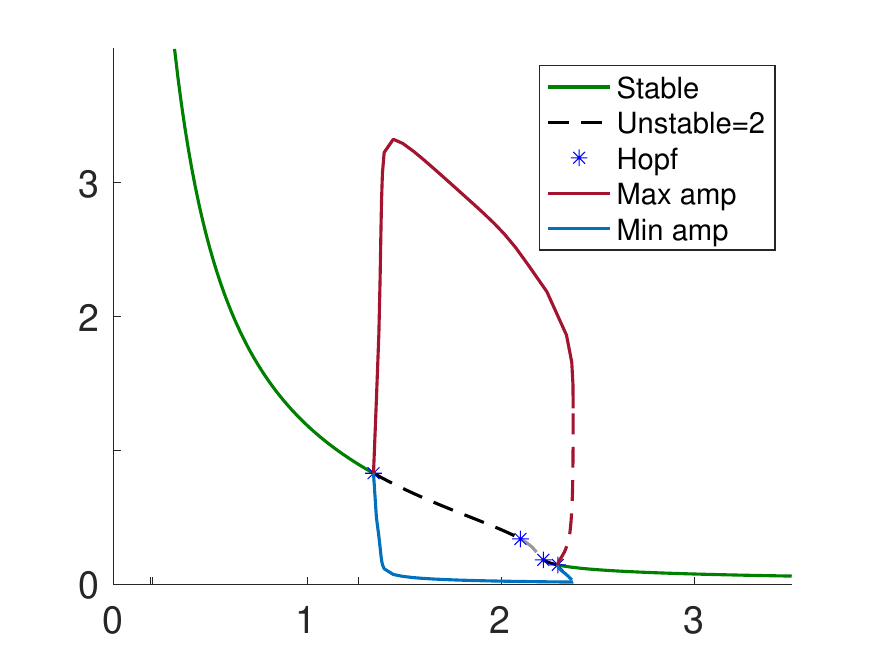}
	\put(-327,117){\rotatebox{90}{$u$}}
	\put(-330,30){$\theta_v$}
	\put(-308,7){$\gamma_4$}
	\put(-217,7){$\gamma_3$}
	\put(-191,7){$\gamma$}
	\put(-280,115){$(a)$}
	\put(-110,115){$(b)$}
	\put(-163,39){$\theta_v$}
	\put(-160,117){\rotatebox{90}{$u$}}
	\put(-150,7){$\gamma_4$}
	\put(-108,7){$\gamma_3$}
	\put(-24,7){$\gamma$}	

\hspace*{-1em}\includegraphics[scale=0.42]{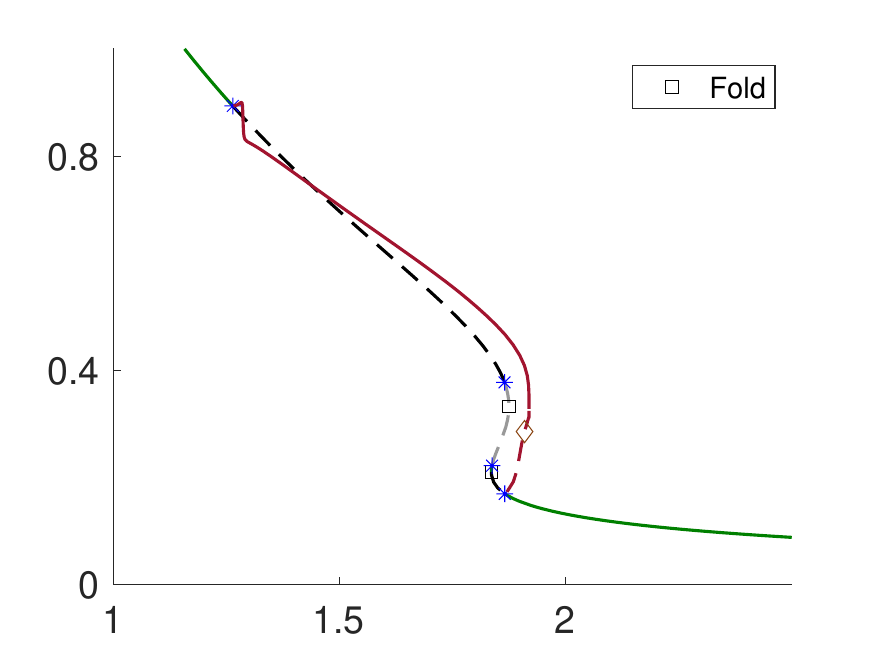}\hspace*{-1em}\includegraphics[scale=0.42]{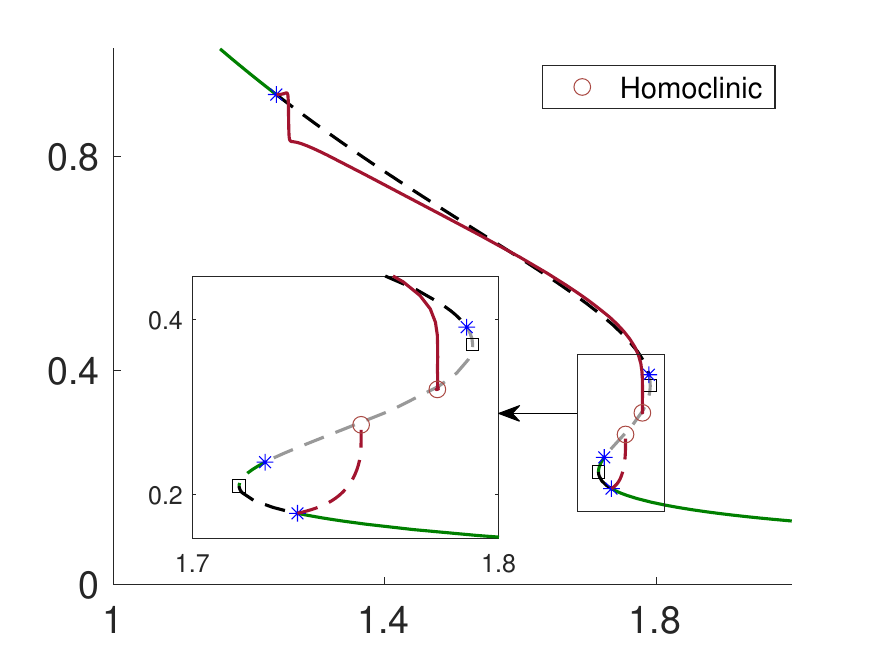}
	\put(-327,117){\rotatebox{90}{$u$}}
	\put(-191,7){$\gamma$}
	\put(-160,117){\rotatebox{90}{$u$}}
	\put(-24,7){$\gamma$}
	\put(-292,106){\small$T_l=4.87$}
	\put(-237,32){\small$T_r=42.33$}
	\put(-234,45){\small$T_{\textit{max}}=95.37$}
	\put(-117,110){\small$T_l=5.24$}
	\put(-60,22){\small$T_r=44.04$}
	\put(-245,115){$(c)$}
	\put(-153,115){$(d)$}

	\caption{One parameter bifurcation diagrams of \eqref{eq:scalarDE}-\eqref{eq:thre_scalarDE} with 
     $g$ constant, and $V$ an increasing function defined by \eqref{eq:gV}
    with (a)  $m=50$ (b) $m=2$ (c) $m=2.25$ and (d) $m=2.35$. Remaining parameters: $\beta=1.4$, $\mu=0.2$, $g^-=g^+=1$, $\theta_v=1$, $a=1$, $v^-=0.1$ and $v^+=2$. In (b) periodic orbits are represented by the maximum and minimum values of $u$ along the periodic solution profile, and in the other panels by 
    the $L^2$-norm \eqref{eq:2norm}. (Reproduced from \cite{gedeon2024dynamics} under CC-BY 4.0 license.)}
	\label{fig:gamma4ltmultgamma3gt} 
\end{figure}

We next simplify the model and set $g_-=g_+$ so that $g$ is a constant function.
If we also set $v_-=v_+$ then $V$ would be a constant function too, from which it follows that the delay 
$\tau$ is constant, and \eqref{eq:scalarDE},\eqref{eq:thre_scalarDE} reduces to the simple linear scalar ODE
$\udot(t)  = \beta e^{-\mu\tau}g - \gamma u(t)$ with a unique globally asymptotically stable steady state
$u= \beta e^{-\mu\tau}g/\gamma$. Instead, we consider \eqref{eq:scalarDE},\eqref{eq:thre_scalarDE}
with $g$ is a constant function, and $V$ an increasing function, resulting in another DDE (like \eqref{eq:twostatedep})
where the only nonlinearity in the system arises from the state-dependency of the delay.

Figure~\ref{fig:gamma4ltmultgamma3gt} shows four one-parameter bifurcation diagrams in $\gamma$, each one for a different value of the steepness $m$ of the function $V$ in \eqref{eq:gV}, As seen in Figure~\ref{fig:gamma4ltmultgamma3gt}(a) both fold and Hopf bifurcations can occur when $m$ is sufficiently large. This time (since $g$ is a constant function) there are at most three co-existing steady states with $u=\theta_v$ on the unstable branch of steady states between the fold bifurcations.    

Figure~\ref{fig:gamma4ltmultgamma3gt}(b) shows an example for small $m$ where there are no fold bifurcations and a unique steady state for each value of $\gamma>0$, as in the $m=0$ case, but for which the Hopf bifurcations persist
even in the absence of fold bifurcations. The stable periodic orbits generated through a supercritical Hopf bifurcation subsequently collide with unstable periodic orbits arising from a subcritical Hopf bifurcation, leading to a saddle-node bifurcation of periodic orbits. In contrast, Figure~\ref{fig:gamma4ltmultgamma3gt}(a) shows that the unstable periodic orbits emanating from a subcritical Hopf bifurcation on the upper steady-state branch terminate in a homoclinic bifurcation.

In Figure~\ref{fig:gamma4ltmultgamma3gt}(c) and (d) we consider values of $m$ slightly larger than in (b).
Figure~\ref{fig:gamma4ltmultgamma3gt}(c) illustrates the effect of passing through the cusp bifurcation, giving rise to the appearance of two fold bifurcations of steady states. Notably, the maximum period along the branch of periodic orbits increases significantly as $m$ increases and the location at which this maximum occurs approaches the intermediate steady state.
Figure~\ref{fig:gamma4ltmultgamma3gt}(d) reveals the breakup of the periodic orbit branch into two disconnected segments, each of which starts at a Hopf bifurcation and terminates in a homoclinic bifurcation on the middle steady-state branch. 

Some care needs to be taken when representing periodic orbits alongside steady states on a bifurcation diagram.
In Figure~\ref{fig:gamma4ltmultgamma3gt}(a), (c) and (d) we plot the $L^2$-norm of the periodic solution
of period $T$, defined as
\be \label{eq:2norm}
\|u\|_2=\left(\frac1T\int_{t=0}^{T}|u(t)|^2 dt\right)^{\!1/2}.
\ee
In contrast, in Figure~\ref{fig:gamma4ltmultgamma3gt}(b) we
plot both $\max u(t)$ and $\min u(t)$ to represent the periodic orbit, which clearly shows the amplitude of the solution. 
Both representations can be useful on bifurcation diagrams, since all three expressions are equal to the steady state value 
at a Hopf bifurcation, and thus periodic orbits are seen to emanate from a steady states at a Hopf bifurcation. However, the 
$L^2$-norm \eqref{eq:2norm} has the extra property
that $\|u\|_2\to u_s$ as the solution converges to a homoclinic orbit to a saddle steady state $u_s$, as seen
in both Figures~\ref{fig:gupvup} and~\ref{fig:gamma4ltmultgamma3gt}.

The homoclinic bifurcation on the branch emanating from the lower Hopf point
in Figure~\ref{fig:gamma4ltmultgamma3gt}(c) and (d) does not persist for large $m$, but
instead the homoclinic bifurcation and the Hopf bifurcation itself move towards the fold bifurcation and terminate at 
BT-point (similar to the one seen in Figure~\ref{fig:gupvup}).
Bogdanov-Takens bifurcations have recently been analyzed for constant delay DDEs in
\cite{BK_BT2022},
but we are not aware of any systematic study of them in state-dependent DDE problems. 

The proximity of the cusp point to the BT point suggests that the system
may be close to a codimension-three Bogdanov-Takens-cusp (BTC) point. While we are not aware of a systematic study of this bifurcation,  they have been observed in a neuron model in \cite{AlDarabsahCampbell21}, and some of the bifurcation structures found in \cite{gedeon2024dynamics} resemble those in \cite{AlDarabsahCampbell21}.

\CCLsubsection{Two Linearly State-Dependent Delays}
\label{subsec:scalarDEtwosd}

Finally we illustrate the dynamics of the two-linearly state-dependent delay DDE \eqref{eq:twostatedep}. Recall that with $c_1=c_2=0$ this is a linear DDE, so when $c_1\ne0$ or $c_2\ne0$, the only nonlinearities arise from the state-dependency of the delays, which drives all of the interesting dynamics. Following \cite{HDMM12,CHK17}
we will set the parameters as $c_1=c_2=1$, $a_2=6\gg a_1=1.3$ and $\gamma=4.75$. The parameters $\kappa_1>0$ and $\kappa_2>0$
will be used as bifurcation parameters with the restriction that  $\kappa_2\in(0,4.75)$ so that $\kappa_2<\gamma$, which by Theorem~\ref{thm:twodelbd} ensures that the state-dependent delays cannot become advances.

\begin{figure}[tp!]
	\centering	
	\hspace*{-1em}\includegraphics[scale=0.33]{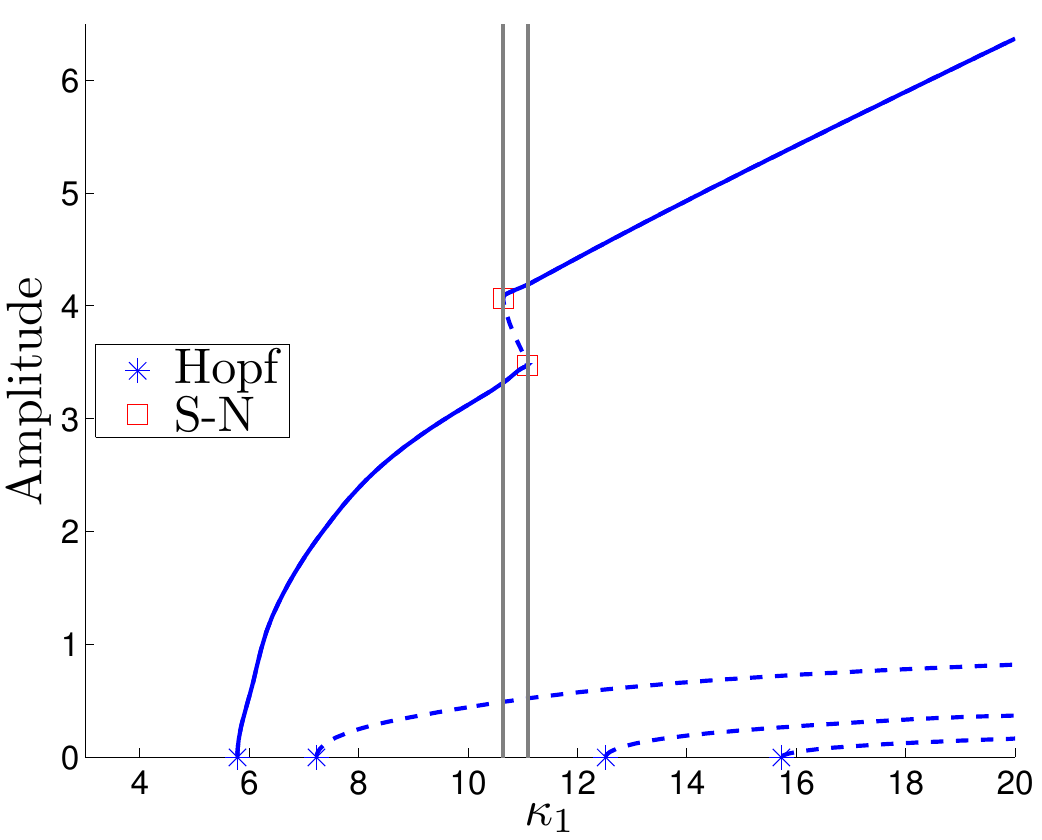}\hspace*{-0em}\includegraphics[scale=0.33]{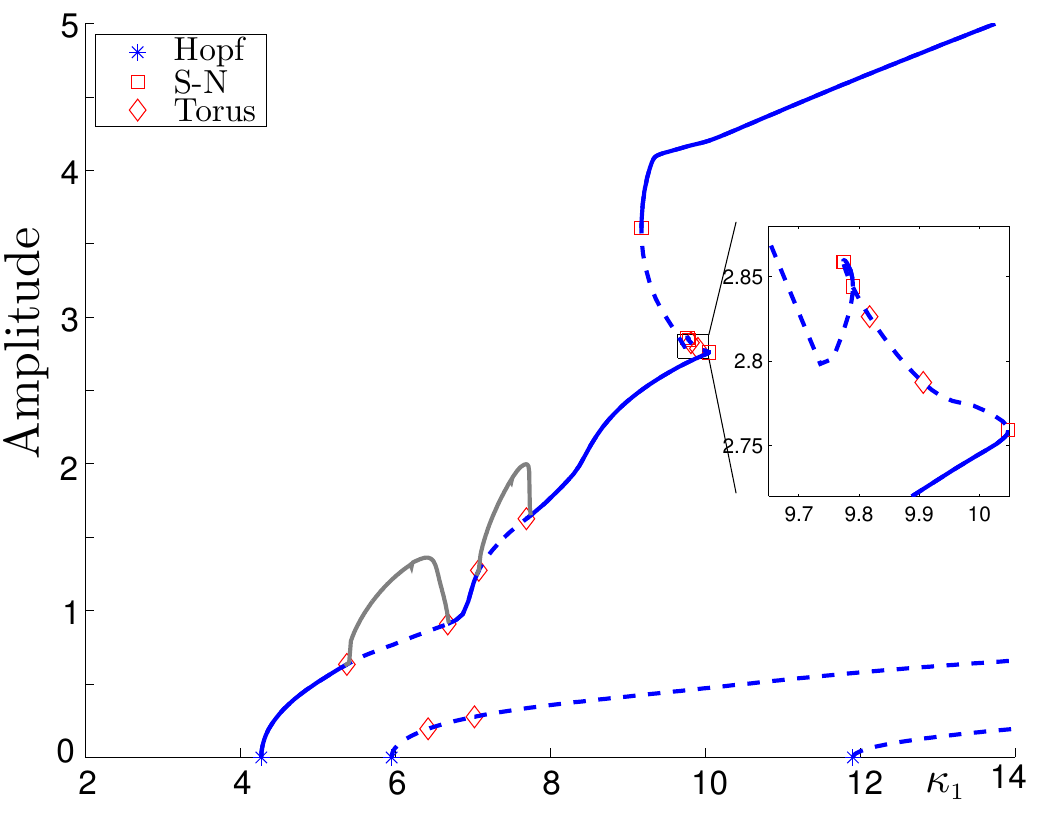}

	\caption{Bifurcation diagrams of \eqref{eq:twostatedep} as $\kappa_1$ is varied showing the amplitude and stability of periodic orbits with (a) $\kappa_2=1$ (b) $\kappa_2=2.3$. In (b) the grey curves denote stable invariant tori. (\textcopyright~AIMS; Reproduced from \cite{HDMM12} with permission.)}
	\label{fig:orianna} 
\end{figure}

Figure~\ref{fig:orianna} displays two bifurcation diagrams for different values of $\kappa_2$, revealing lots of branches of Hopf bifurcations bifurcating from the steady state $u=0$ in supercritical Hopf bifurcations, with only the first Hopf bifurcation resulting in a branch of stable periodic solutions. When $a_2\gg a_1$ there is a pair of fold bifurcations 
on this principal branch resulting in an interval of bistability of periodic orbits. 
When $\kappa_2$ is sufficiently large, as illustrated in Figure~\ref{fig:orianna}(b) there are additional torus bifurcations 
on the principal branch resulting in parameter sets for which there are no stable steady states or periodic orbits. 
Numerical IVP simulations of \eqref{eq:twostatedep} confirm the existence of stable invariant tori between the torus bifurcations. We already illustrated one of these tori in Figure~\ref{fig:tori}. We remark that torus bifurcations are also seen on the second branch of periodic solutions, but we do not have a method for computing unstable tori.

\begin{figure}[tp!]
	\centering	
	\scalebox{0.28}{\includegraphics{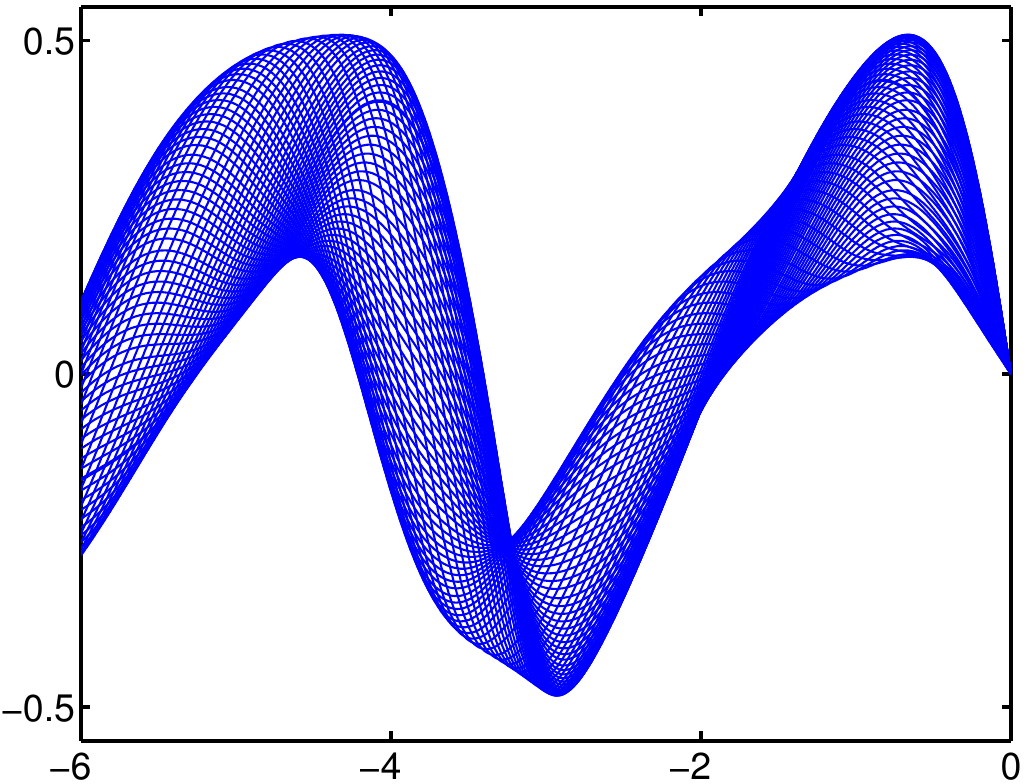}}\scalebox{0.4}{\hspace*{2em}\includegraphics{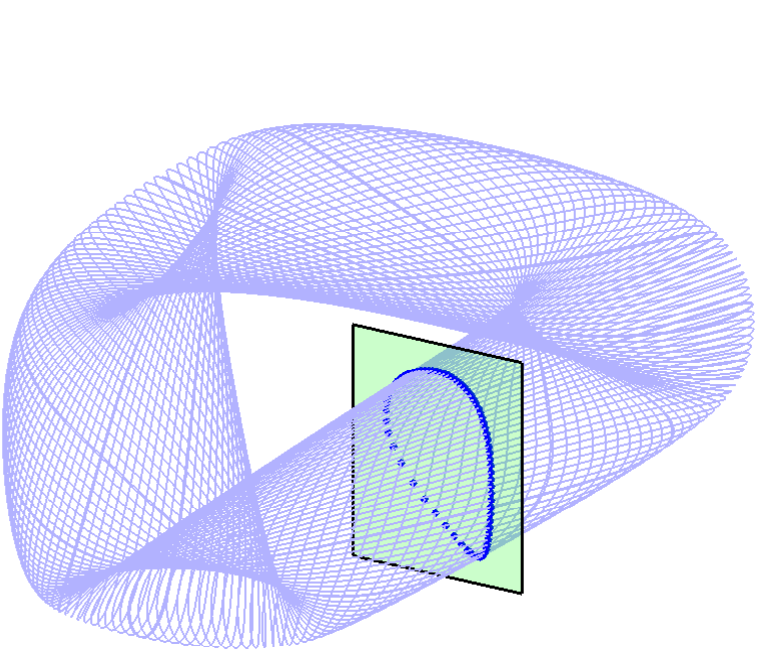}}
\put(-70,0){\small $u_t(0)=u(t)=0$}
\put(-175,-5){\small $t$}
\put(-220,90){\small (a)}
\put(-140,90){\small (b)}
\put(-295,75){\rotatebox[origin=c]{90}{\small $u_t(\theta)$}}
	\caption{Poincar\'e section $u_t(0)=0$ of the quasi periodic torus from Figure~\ref{fig:tori} generated from a single solution of \eqref{eq:twostatedep} with $\kappa_1=4.44$ and $\kappa_2=3$. 
(a) Solution segments $u_t$ on the Poincar\'e section. (b) Projection of the solution onto $(u(t),u(t-a_1),u(t-a_2))$-space (light blue), together with the two-dimensional projection of the Poincar\'e section. (\textcopyright SIAM; Reproduced from \cite{CHK17} with permission.)}
	\label{fig:poincare} 
\end{figure}

Solutions oscillate about the steady state $u^*=0$, so a natural Poincar\'e section to study the dynamics is
$\bigl\{\phi\in C:\phi(0)=0$ and $\phi'^{\!}(0)<0\bigr\}$. For finite dimensional dynamical systems the Poincar\'e section reduces the dimension by one, but $C$ is infinite dimensional, resulting in an infinite dimensional Poincar\'e section. 
Plotting function segments on the Poincar\'e section with $u_t(0)=0$ is not very
revealing, as seen in Figure~\ref{fig:poincare}(a). 

In a variation on the approach of \cite{GlassMackey79} described in Section~\ref{sec:dynsys}, with state-dependent
delays \cite{CHK17} found it more convenient to visualise solution in three dimensions 
by projecting $u_t$ onto $(u(t),u(t-a_1),u(t-a_2))=(u(t),u(t-\tau_1(u^*)),u(t-\tau_2(u^*)))$ rather than using the 
state-dependent delays $(u(t),u(t-\tau_1(u(t))),u(t-\tau_2(u(t))))$. Applying the Poincar\'e section
$u_t(0)=0$ to $(u(t),u(t-a_1),u(t-a_2))$ is the same as requiring $u(t)=0$, and so we can represent
function segments on the Poincar\'e section in $\R^2$ by $(u_t(-a_1),u_t(-a_2))=(u(t-a_1),u(t-a_2))$,
by plotting $u(t-a_2)$ against $u(t-a_1)$ when $u(t)=0$ (with $\udot(t)<0$). This is illustrated in 
Figures~\ref{fig:poincare}(b) and~\ref{fig:chk:phaselocked}(d).

\begin{figure}[tp!]
	\centering	
	\includegraphics[scale=0.77]{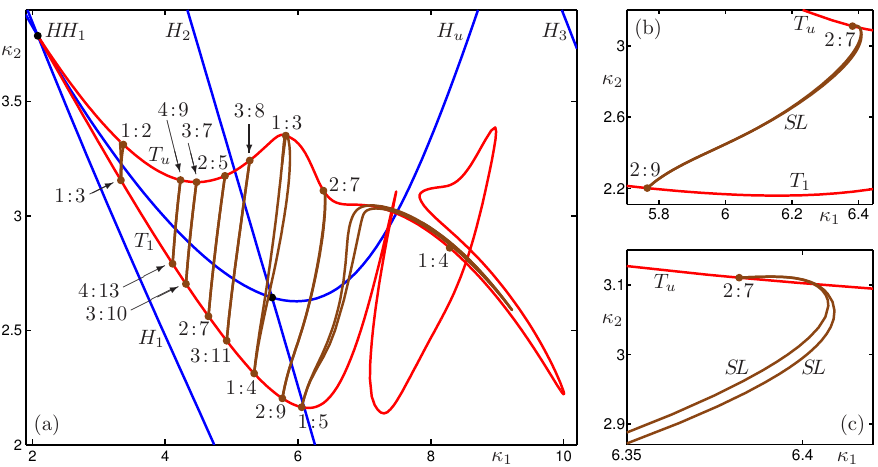}
	\caption{Two-parameter continuation of Hopf (blue), torus (red) and fold (brown) bifurcations in the $(\kappa_1,\kappa_2)$-parameter plane show many resonance tongues of phase locked tori, with successive enlargements if one of the tongues in (b) and (c). (\textcopyright SIAM; Reproduced from \cite{CHK17} with permission.)}
	\label{fig:chk:2param} 
\end{figure}

Figure~\ref{fig:chk:2param} shows a two parameter bifurcation diagram for \eqref{eq:twostatedep}
which reveals multiple double-Hopf bifurcations. Two curves of torus bifurcations emerging from the double-Hopf point $\textit{HH}_1$ give rise to the tori encountered already. Numerous resonance tongues of phase locked tori are also shown. One of the larger resonance tongues is explored in Figure~\ref{fig:chk:phaselocked}.

\begin{figure}[tp!]
	\centering	
	\includegraphics[scale=0.77]{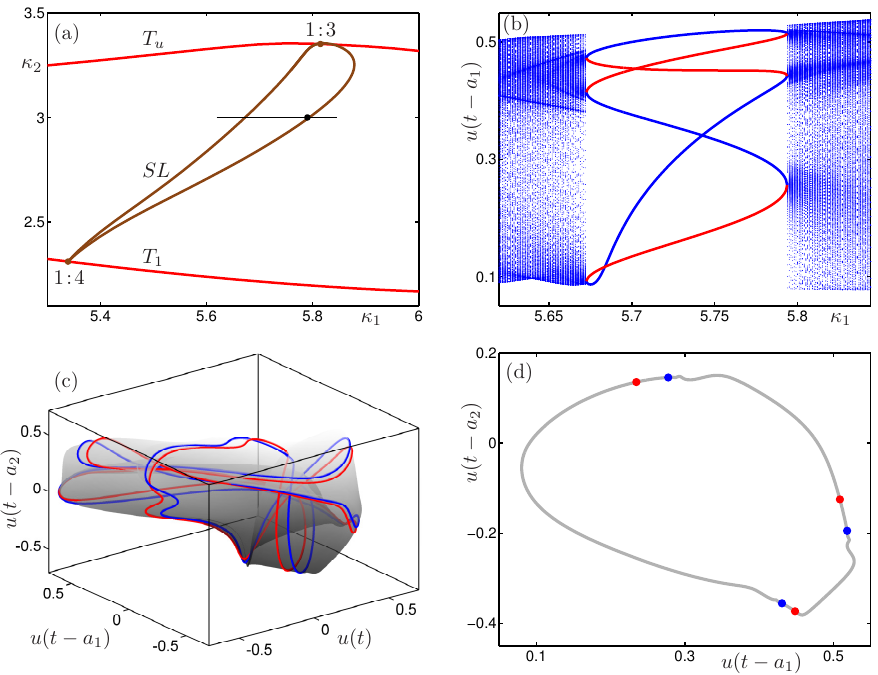}
	\caption{(a) An enlargement of Figure~\ref{fig:chk:2param} showing one resonance tongue. (b) A one parameter continuation
with $\kappa_2=3$ (corresponding to black line in (a) showing values of $u(t-a_1)$ at each intersection of the orbit with the Poincar\'e section. (c \& d) The phase-locked torus when $(\kappa_1,\kappa_2)=(5.79,3)$ (black dot in (a)) with the embedded unstable and stable periodic orbits (c) projected onto $(u(t),u(t-a_1),u(t-a_2))$-space and (d) its Poincar\'e
trace in the $(u(t-a_1),u(t-a_2))$ plane. (\textcopyright SIAM; Reproduced from \cite{CHK17} with permission.)}
	\label{fig:chk:phaselocked} 
\end{figure}

Stable quasi-periodic tori, such as seen earlier in Figure~\ref{fig:poincare}, are easy to compute as solutions of the DDE IVP converge to them and fill the torus. Taking an initial function that is close to the unstable steady state (or better a function close to the unstable periodic orbit) and throwing away the initial transient reveals the torus in that case.
The computation of phase-locked tori, as in Figure~\ref{fig:chk:phaselocked}, is altogether more involved.

To compute the phase locked torus shown in Figure~\ref{fig:chk:phaselocked} we first solve \eqref{eq:twostatedep} as an IVP
(using ddesd) which reveals only the stable periodic orbit which is embedded in the phase locked torus. Then a one parameter 
continuation of the periodic orbit using DDE-Biftool finds both the fold bifurcations at the edge of the resonance tongues and the second unstable periodic orbit embedded in the torus. This is how the red and blue curves in Figure~\ref{fig:chk:phaselocked}(b)-(d) are computed.
A two-parameter continuation of the fold bifurcations reveals the resonance tongue shown in  Figure~\ref{fig:chk:phaselocked}(a).
 To fill out the rest of the torus (the grey surface and curves in 
Figure~\ref{fig:chk:phaselocked}(c)-(d)) we compute the unstable manifold of the unstable periodic orbit. Since the torus itself is stable, this is embedded within the torus and fills the strip between the unstable and stable periodic orbits. See \cite{CHK17} for details of the computations.

\begin{figure}[tp!]
	\centering	
	\hspace*{-0.2em}\includegraphics[scale=0.7]{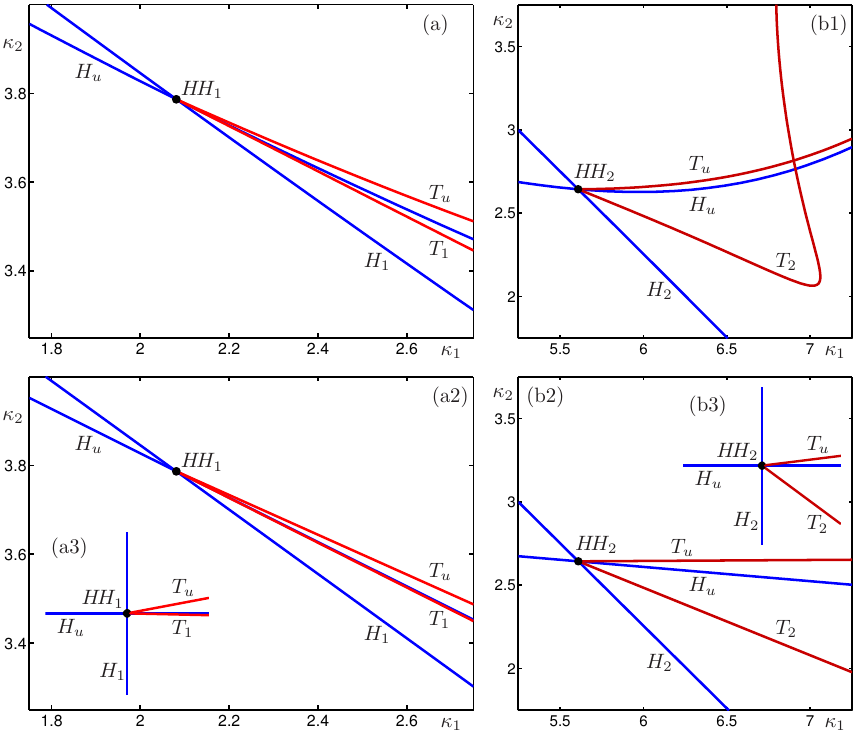}\hspace*{-0em}\includegraphics[scale=0.7]{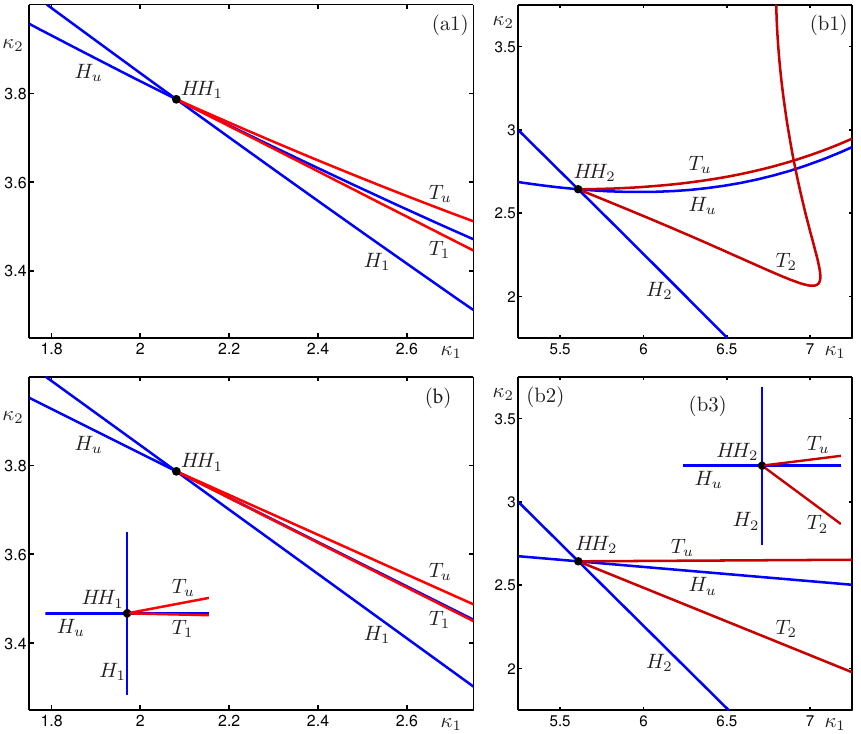}
	\caption{(a) Detail of the bifurcation diagram from Figure~\ref{fig:chk:2param}(a) near to the double-Hopf point $\textit{HH}_1$. (b) Inset: Normal form of the bifurcation in the $(\mu_1,\mu_2)$ normal form coordinates obtained by computing the normal form coefficients at the Hopf-Hopf point. Main panel: The normal form transformed to the original 
$(\kappa_1,\kappa_2)$ parameters. (\textcopyright SIAM; Reproduced from \cite{CHK17} with permission.)}
	\label{fig:chk:nf} 
\end{figure}

In Figure~\ref{fig:chk:nf} we briefly demonstrate the efficacy of the normal form computations of \cite{CHK17} for double-Hopf bifurcations. Figure~\ref{fig:chk:nf}(a) shows the numerically computed bifurcation diagram near to $\textit{HH}_1$ for the full nonlinear state-dependent delay DDE \eqref{eq:twostatedep}. The nearly identical Figure~\ref{fig:chk:nf}(b) is obtained from the normal form computations, first using the techniques of Section~\ref{sec:linearize} to convert \eqref{eq:twostatedep} into the 
9 constant delays DDE \eqref{eq:2ssconst}, then using established techniques for constant delay DDEs
to reduce this to a 4-dimensional ODE on the centre manifold, and finally applying the ODE theory to obtain the normal form.

The computed bifurcation normal form in the inset of Figure~\ref{fig:chk:nf}(b) looks quite different to the original bifurcation in Figure~\ref{fig:chk:nf}(a) because 
the normal form computations introduce new variables so that the Hopf bifurcations occur on the coordinate axes. Equation (A.52) in \cite{CHK17} can be used to map back to the original $(\kappa_1,\kappa_2)$ coordinates. After making this transformation the two panels of Figure~\ref{fig:chk:nf} look remarkably similar, despite the left panel being computed from the full nonlinear dynamics of \eqref{eq:twostatedep} over a range of 
$(\kappa_1,\kappa_2)$ values, while the right panel is computed only from the normal form coefficients of the constant delay DDE \eqref{eq:2ssconst} at the single point  $\textit{HH}_1$! This all works without any rigourous theory for co-dimension-two double Hopf bifurcations in state-dependent delay DDEs (in part because it has not been shown that the centre manifold is sufficiently smooth). Indeed it works so well that we are convinced that there is an analytical proof somewhere in the ether waiting to be discovered or constructed. With that we leave the reader with something to think about.

%
%
%
%
%
%
%
%
%
%
\bibliographystyle{plainnat}

\end{document}